%
\documentclass[12pt,reqno]{amsart}     
\usepackage[english]{babel}
\usepackage{amssymb}
\usepackage[top=2.5cm,right=2.4cm,left=2.4cm,bottom=2.2cm]{geometry} 
\usepackage[colorlinks=true,linkcolor=black,citecolor=black]{hyperref}

\title[Quantum weighted projective spaces]{Quantum weighted projective and lens spaces}

\author[F.~D'Andrea and G.~Landi]{Francesco D'Andrea and Giovanni Landi}

\address[F.~D'Andrea]{Dipartimento di Matematica e Applicazioni, Universit\`a di Napoli ``Federico II''
         and I.N.F.N. Sezione di Napoli, Complesso MSA, Via Cintia, 80126 Napoli, Italy}
\email{francesco.dandrea@unina.it}
\address[G.~Landi]{Matematica, 
Universit\`{a} di Trieste, Via A.~Valerio~12/1, I-34127 Trieste, Italy and I.N.F.N. Sezione di Trieste, Trieste, Italy.}
\email{landi@units.it}

\thanks{\hspace*{-\parindent}\textit{Date:} February 2015. 
\\[2pt]
2010 \textit{Mathematics Subject Classification}. Primary: 20G42; Secondary: 58B32; 58B34.
\\[2pt]
\textit{Key words and phrases.}
Noncommutative geometry, Fredholm modules, weighted quantum projective spaces, quantum lens spaces, quantum principal circle bundles.
\\[5pt]
\textit{Thanks.}
This work was partially supported by the Italian Project ``Prin 2010-11 -- Operator Algebras, Noncommutative Geometry and Applications''.
F.D.~was partially supported by UniNA and Compagnia di San Paolo under the Program STAR 2013.}


\linespread{1.2}
\allowdisplaybreaks[4]
\numberwithin{equation}{section}
 

\newtheorem{prop}{Proposition}[section]
\newtheorem{thm}[prop]{Theorem}
\newtheorem{lemma}[prop]{Lemma}
\newtheorem{cor}[prop]{Corollary}
\newtheorem{df}[prop]{Definition}
\newtheorem{ex}[prop]{Example}

\theoremstyle{definition}
\newtheorem{rem}[prop]{Remark}

\newcommand{\U}{\mathrm{U}}
\newcommand{\A}{\mathcal{O}}
\newcommand{\B}{\mathcal{B}}
\newcommand{\HH}{\mathcal{H}}

\newcommand{\N}{\mathbb{N}}
\newcommand{\Z}{\mathbb{Z}}
\newcommand{\R}{\mathbb{R}}
\newcommand{\C}{\mathbb{C}}
\newcommand{\CP}{\mathbb{C}\mathrm{P}}
\newcommand{\WP}{\mathbb{P}}
\newcommand{\inner}[1]{\left<#1\right>}
\newcommand{\ket}[1]{\left|\smash[t]{#1}\right>}
\newcommand{\tr}{\mathrm{Tr}}

\newcommand{\sqbn}[2]{\genfrac{[}{]}{0pt}{}{#1}{#2}_{\!q}}
\newcommand{\mc}{\mathcal}
\newcommand{\mr}{\mathrm}

\newcommand{\mo}{\mathcal} 
\newcommand{\tbrace}[2]{\genfrac{ \{ }{ \} }{0pt}{1}{#1}{#2}_{\!q}}
\newcommand{\dbrace}[2]{\genfrac{ \{ }{ \} }{0pt}{}{#1}{#2}_{\!q}}
\newcommand{\mv}{\underline}
\newcommand{\vv}{\boldsymbol}
\renewcommand{\Re}{\mathrm{Re}}
\renewcommand{\Im}{\mathrm{Im}}
\newcommand{\om}{\hspace{-1pt}:\hspace{-1pt}}


\begin{document}

\begin{abstract}
We generalize to quantum weighted projective spaces in any dimension previous results of us on K-theory and K-homology of
quantum projective spaces `tout court'. For a class of such spaces, we explicitly construct families of  
Fredholm modules, both bounded and unbounded (that is spectral triples),
and prove that they are linearly independent in the K-homology of the corresponding $C^*$-algebra.
We also show that the quantum weighted projective spaces are base spaces of quantum principal 
circle bundles whose total spaces are quantum lens spaces.
We construct finitely generated projective modules associated with the principal bundles and pair them with the Fredholm modules, 
thus proving their non-triviality.

\bigskip\bigskip\bigskip

\centerline{\emph{Dedicated to Marc Rieffel on the occasion of his 75th birthday}}

\end{abstract}

\maketitle

\setcounter{tocdepth}{1}
\tableofcontents
\setlength{\parskip}{4pt}


\section{Introduction}\label{se:intro}

This paper deals with the geometry of quantum \emph{weighted} projective spaces. 
For any weight vector $\mv{\ell}=(\ell_0,\ldots,\ell_n)$, the quantum weighted projective space $\WP_q(\mv\ell)$ is the quotient of the 
odd-dimensional quantum sphere $S^{2n+1}_q$ by a weighted action of U(1). The latter acts by automorphisms on the coordinate algebra $\A(S^{2n+1}_q)$: on generators $z_i$ just by  
$$
\alpha_t(z_i)=t^{\ell_i}z_i \;,\qquad\forall\;t\in\U(1),\,i=0,\ldots,n 
$$
(cf.~\S\ref{sec:due} for details).
The fixed point algebra $\A(\WP_q(\mv\ell))$ is defined to be the coordinate algebra of $\WP_q(\mv\ell)$. For $\ell_0=\ldots=\ell_n=1$, one gets in this way the quantum projective space $\CP^n_q$.

Quantum weighted projective lines $\WP_q(\ell_0,\ell_1)$ were studied in \cite{BF12}, with a particular attention to quantum teardrops (for which $\ell_0=1$), and with more generality in \cite{AKL14}. The present paper is devoted to the case of arbitrary dimension. While the most general case of an arbitrary weight vector seems intractable at the moment, we select a particularly nice class of weight vectors which allows us to push things considerably. We focus on weight vectors $\mv{\ell}=(\ell_0,\ldots,\ell_n)$ for which the classical $\WP(\mv\ell)$ is homeomorphic to the ordinary projective space $\CP^n$. (An algebraic characterization of these weight vectors is in \S\ref{sec:tre}.)

By a fortunate stroke of serendipity, for such a class of weight vectors the coordinate algebra $\A(\WP_q(\mv\ell))$ is generated by simple elements, of length 2 
(see \S\ref{sec:quattro}). This gives an alternative characterization of such a class of weight vectors: one belongs to this class if and only if the corresponding coordinate algebra has a set of generators with length no greater than 2.

We next generalize some of the results of \cite{DL10} to $\WP_q(\mv{\ell})$.
After an interlude on irreducible representations in \S\ref{sec:irreps}, we construct a family of $1$-summable Fredholm 
modules in \S\ref{sec:Khom}, and a class of Dirac operators (and then spectral triples) in \S\ref{sec:otto}. 

In~\S\ref{sec:sei}, a quantum weighted projective space 
$\WP_q(\mv{\ell})$ is shown to be the base space of a quantum principal circle bundle whose total space is a quantum 
lens space $L_q(p;\mv{\ell})$, with explicit examples in \S\ref{sec:sette}. 
For the weighted projective lines this was done in \cite{BF12} and in generality in \cite{AKL14}. 

In \S\ref{sec:nove}, we construct projections in the $C^*$-algebra of quantum weighted projective spaces and prove that the Fredholm modules of \S\ref{sec:Khom} are linearly independent in K-homology. 

We finish by mentioning possible uses of quantum projective lines (and more general quantum projective spaces) for Chern-Simons theories. These theories were generalised in \cite{BT13} to circle-bundles over orbifolds (such as weighted projective spaces). However, these applications go beyond the scope of the present paper and should await a future time. 

\smallskip

\noindent\textit{General notations.}
By a \emph{$*$-algebra} we mean a complex unital involutive associative algebra
and by a representation of a $*$-algebra we always mean a unital $*$-representation.
We will denote by $\mv{m}=(m_0,m_1,\ldots,m_n)$ a vector with $n+1$ components (labelled from $0$ to $n$), and by
$\vec{m}=(m_1,\ldots,m_n)$ a vector with $n$ components (labelled from $1$ to $n$), when $n\geq 1$ is the complex
dimension of the space considered.

\smallskip

\noindent\textit{Acknowledgments.}
We are grateful to A.~De Paris, M.~Frank and T.~Brzezi{\'n}ski for useful discussions and correspondences.
Part of the work was done at the Hausdorff Research Institute for Mathematics in Bonn during the 2014 Trimester Program ``Non-commutative Geometry and its Applications". We thank the organizers of the Program for the invitation and all people at HIM for the nice hospitality.

\section{Weighted projective spaces}\label{sec:tre}

Classical weighted projective spaces $\WP(\mv\ell)$ are among the best known examples of projective toric varieties. As quotient spaces they have a natural orbifold structure and include, in complex dimension 1, the orbifolds named teardrops by Thurston in \cite{Thu80}. We start by recalling some basic facts about their classification as done in \cite{BFNR13}.

A \emph{weight vector} $\mv{\ell}=(\ell_0,\ldots,\ell_n)$ is a finite sequence of positive integers, called \emph{weights}.
A weight vector is \emph{normalized} if for any prime $p$ at least two weights are not divisible by $p$.
One says that a weight vector is \emph{coprime} if $\gcd(\ell_0,\ldots,\ell_n)=1$; and it is \emph{pairwise coprime} if $\gcd(\ell_i,\ell_j)=1$, for all $i\neq j$.
For $n=1$, the only normalized weight vector is $(1,1)$; if $n=2$, a weight $\mv{\ell}$ is pairwise coprime if and only if it is normalized; if $n\geq 3$, every pairwise coprime weight vector is normalized but the
converse is not true, e.g.~$(1,1,\ell_2,\ldots,\ell_n)$ is normalized but not necessarily pairwise coprime.

Fixed a weight vector $\mv{\ell}=(\ell_0,\ldots,\ell_n)$, an action of $t\in \C^*$ on $z=(z_0,\ldots,z_n)\in \C^{n+1}\smallsetminus\{0\}$
is given by $z\mapsto (t^{\ell_0}z_0,\ldots,t^{\ell_n}z_n)$. If the action is restricted to the subgroup $\U(1)\subset  \C^*$, 
the unit sphere $S^{2n+1}\subset \C^{n+1}$ is an invariant submanifold.

The quotient $\C^{n+1}\smallsetminus\{0\} / \C^*$ yields the weighted projective space $\WP(\mv{\ell})=\WP(\ell_0,\ldots,\ell_n)$; 
when $\ell_0=\ldots=\ell_n=1$, this is the ordinary complex projective space $\CP^n$.
In general there is an embedding $\WP(\mv{\ell})\hookrightarrow\CP^n$ realizing $\WP(\mv{\ell})$ as a complex projective (toric) variety.
As a topological space, $\WP(\mv{\ell})$ is homeomorphic to the quotient $S^{2n+1}/\U(1)$ with respect to the weighted action defined above, a quotient which has a natural structure of orbifold.

It is known that two weighted projective spaces are isomorphic as
projective varieties if and only if they are homeomorphic \cite{BFNR13}. Moreover,
every isomorphism class can be represented by a normalized weight,
and two such spaces are isomorphic if and only if they have the same normalized weights, up to order.
As a corollary, for $n=1$ every such space is isomorphic to $\CP^1$. Let us stress that, despite this,
$\WP(\ell_0,\ell_1)$ has orbifold singularities in every case except $\ell_0=\ell_1=1$; the spaces $\WP(\ell_0,\ell_1)$ are all
homeomorphic to $\CP^1$, but not isomorphic as orbifolds.

It is useful to have in mind few basic examples.

\begin{ex}
The map $S^3\to\R^3$, $(z_1,z_2)\mapsto (x_1,x_2,x_3):=\big(\Re(z_1^*z_2^3),\Im(z_1^*z_2^3),z_1z_1^*\big)$,
factors to a homeomorphism between $\WP(3,1)$ and the variety in $\R^3$ defined by the equation 
$x_1^2+x_2^2=x_3(1-x_3)^3$.
The latter has an orbifold singularity (a cusp) at $x=(0,0,1)$. As a toric variety,
the map \mbox{$[z_1\om z_2] \mapsto [z_1\om z_2^3]$} is an isomorphism $\WP(3,1)\to\CP^1$.
\end{ex}

\noindent
As the $\WP(3,1)$ example shows, $\WP(\mv{\ell})$ may be isomorphic to $\CP^n$ as complex algebraic
variety and, at the same time, have orbifold singularities.

\begin{ex}
There is an isomorphism of algebraic varieties between $\WP(1,1,2)$ and the cone
in $\CP^3$ with equation $y_0y_2-y_1^2=0$; the isomorphism is given by
$$
[z_0\om z_1\om z_2]\mapsto [y_0\om y_1\om y_2\om y_3]\om =[z_0^2\om z_0z_1\om z_1^2\om z_2] \;,
$$
\end{ex}

As a preliminary step, we investigate the conditions on the weight vector $\mv{\ell}$ for which \mbox{$\WP(\mv{\ell})\simeq \CP^n$}.
Firstly, 
the observation that $\WP(\mv{\ell})\simeq\WP(m\mv{\ell})$ for all $m\geq 1$ allows one to consider coprime weight vectors
only (cf.~Lemma~\ref{lemma:obs} for the quantum case).

\begin{df}
If $\mv{\ell}=(\ell_0,\ldots,\ell_n)$ is a weight vector, we denote by $\mv{\ell}^{\,\sharp}$ the weight vector
whose $i$-th component is equal to $\prod_{j\neq i}\ell_j$, for all $i=0,\ldots,n$.

For all $p\geq 1$ and $0\leq k\leq n$, we further define an operation $\mo{M}_k(p)$ on weight vectors 
as follows: we let $\mo{M}_k(p)\mv{\ell}$ be the weight vector whose $i$-th component is equal to $\ell_i$ if $i=k$, and
to $p\ell_i$ otherwise.

The left inverse of $\mo{M}_k(p)$ is the operation $\mo{D}_k(p)$ partially defined as follows: if $p$ divides 
$\ell_j$ for all $j\neq k$, then $\mo{D}_k(p)\mv{\ell}$ is defined as the weight vector whose $i$-th component 
is equal to $\ell_i$ if $i=k$, and to $p^{-1}\ell_i$ otherwise.
\end{df}
The map $\mv{\ell}\mapsto\mv{\ell}^{\,\sharp}$ is not exactly an involution, but it satisfies
$$
(\mv{\ell}^{\,\sharp})^{\,\sharp}=m \hspace{1pt} \mv{\ell}
$$
with $m:=(\ell_0\ell_1\cdots\ell_n)^{n-1}$.
Therefore, from the above mentioned fact that $\WP(\mv{\ell})\simeq\WP(m\mv{\ell})$, 
every isomorphism class of weighted projective spaces can be obtained from
a weight that is in the image of this map. 
A similar result holds for the quantum case as well, cf.~Lemma~\ref{lemma:obs} below.

\begin{df}
Let $\mv{\ell}$ be a weight vector.
The \emph{multiplication} $\mo{M}_k(p)$ is \emph{admissible} on $\mv{\ell}$ if
$p$ is a prime number and is not a divisor of $\ell_k$.
The \emph{division} $\mo{D}_k(p)$ is \emph{admissible} on $\mv{\ell}$ if
$p$ is prime, divides $\ell_i$ for all $i\neq k$ and does not divide $\ell_k$.
\end{df}

Classically, two (coprime) weight vectors correspond to isomorphic weighted projective spaces if and
only if one can be obtained from the other with an iterated application (in any order) of admissible
multiplications and divisions \cite{BFNR13}.
So for example, one has $\WP(1,2,2)\simeq\WP(2,3,6)\simeq\CP^2$ while $\WP(1,1,2)\not\simeq\CP^2$
(in particular, $\mo{D}_0(2)$ is admissible in the former case, while $\mo{M}_2(2)$ is not admissible in the latter case).

\begin{lemma}
A weight vector
$\mv{\ell}=\mv{m}^{\sharp}$ is coprime if and only if $\mv{m}$ is pairwise coprime.
\end{lemma}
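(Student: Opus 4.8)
The plan is to reduce everything to a prime-by-prime analysis, using only the explicit formula $\ell_i=\prod_{j\neq i}m_j$ for the components of $\mv{\ell}=\mv{m}^\sharp$. Throughout, $p$ denotes an arbitrary prime, and the two implications are handled separately.

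First I would prove that pairwise coprimality of $\mv{m}$ forces $\mv{\ell}$ to be coprime. Assume $\mv{m}$ is pairwise coprime and suppose, towards a contradiction, that some prime $p$ divides every component $\ell_i$. From $p\mid\ell_0=m_1m_2\cdots m_n$ and the primality of $p$, there is an index $j_0\geq 1$ with $p\mid m_{j_0}$. Applying the same reasoning to $\ell_{j_0}=\prod_{j\neq j_0}m_j$, which $p$ also divides, yields a second index $k\neq j_0$ with $p\mid m_k$. Then $p\mid\gcd(m_{j_0},m_k)$ with $j_0\neq k$, contradicting pairwise coprimality. Hence no prime divides all the $\ell_i$, and $\gcd(\ell_0,\ldots,\ell_n)=1$.

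For the converse I would argue by contrapositive: if $\mv{m}$ is not pairwise coprime, then $\mv{\ell}$ is not coprime. Choose indices $i\neq j$ and a prime $p$ with $p\mid m_i$ and $p\mid m_j$. The key observation is that, since $i$ and $j$ are distinct, any single index $k$ can equal at most one of them; therefore at least one of $m_i,m_j$ survives as a factor of $\ell_k=\prod_{s\neq k}m_s$. Consequently $p\mid\ell_k$ for every $k=0,\ldots,n$, so $p\mid\gcd(\ell_0,\ldots,\ell_n)$ and $\mv{\ell}$ fails to be coprime.

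I do not expect a serious obstacle here: the statement is essentially a bookkeeping exercise about which factors $m_s$ persist in each product $\ell_k$. The only point demanding a moment's care is the converse direction, where one must verify that the shared prime factor remains present after deleting any single index; this is exactly what is guaranteed by having \emph{two} distinct indices $i,j$ carrying that common factor.
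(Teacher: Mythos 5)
Your proof is correct. The ``$\Rightarrow$'' direction (coprime implies pairwise coprime, which you handle by contrapositive) is essentially the paper's argument: the paper observes that $\gcd(m_i,m_j)$ divides every $\ell_k$ because for each $k$ at least one of $i,j$ differs from $k$, which is exactly your ``at least one of $m_i,m_j$ survives as a factor of $\ell_k$'' observation, just phrased with a gcd instead of a single shared prime. The ``$\Leftarrow$'' direction is where you genuinely diverge: the paper starts from the identity $\gcd(\ell_i,\ell_j)=\gcd(m_i,m_j)\prod_{k\neq i,j}m_k$ and proves by induction the stronger quantitative statement $\gcd(\ell_0,\ldots,\ell_k)=\prod_{i>k}m_i$, which yields coprimality at $k=n$; you instead run a prime-by-prime contradiction (a prime dividing all $\ell_i$ would have to divide two distinct $m_j$'s). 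Your version is shorter and more elementary, avoiding both the gcd identity and the induction; the paper's version buys an explicit formula for the partial gcd's along the way, though that extra information is not used elsewhere. Both arguments are complete and valid.
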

\begin{proof}[Proof of ``$\,\Leftarrow$'']
Note that $\gcd(\ell_i,\ell_j)=\gcd(m_i,m_j)\prod_{k\neq i,j}m_k$ for all $i\neq j$.

Let $\mv{m}$ be pairwise coprime. We prove by induction that
$\gcd(\ell_0,\ell_1,\ldots,\ell_k)=\prod_{i>k}m_i$ for all $1\leq k\leq n$ (with the convention
that empty products are $1$).
It follows from the equation above when $k=1$. Assume it is true for some $k\in\{1,\ldots,n-1\}$. Then
$$
\gcd(\ell_0,\ell_1,\ldots,\ell_{k+1})=
\big(\gcd(\ell_0,\ell_1,\ldots,\ell_k),\ell_{k+1}\big)=
\gcd(1,m_0\cdots m_{k})\prod_{i>k+1}m_i=
\prod_{i>k+1}m_i \;.
$$

\noindent
\textit{Proof of ``$\,\Rightarrow$''.}
Fix $i,j$ with $i\neq j$.
Since $m_i$ divides $\ell_k$ for all $k\neq i$ and $m_j$ divides $\ell_i$,
$\gcd(m_i,m_j)$ divides $\ell_k$ for all $k$. Hence if $\mv{\ell}$ is coprime,
$\gcd(m_i,m_j)=1$, for all $i\neq j$.
\end{proof}

\begin{thm}\label{thm:6}
Let $\mv{\ell}$ be coprime. Then $\WP(\mv{\ell})\simeq\CP^n$ if and only if there is a pairwise coprime
weight vector $\mv{p}$ such that $\mv{\ell}=\mv{p}^{\,\sharp}$.
\end{thm}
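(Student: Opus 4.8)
The plan is to reduce everything to the classification recalled just above: two coprime weight vectors give isomorphic weighted projective spaces if and only if one is obtained from the other by an iterated application of admissible multiplications $\mo{M}_k(p)$ and divisions $\mo{D}_k(p)$. Since $\CP^n=\WP(1,\ldots,1)$ and $(1,\ldots,1)=\mv{p}^{\,\sharp}$ for the pairwise coprime vector $\mv{p}=(1,\ldots,1)$, the whole theorem will follow once I understand how the admissible operations interact with the $\sharp$-parametrization.

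The key computational step, and the crux of the argument, is a transfer lemma: if $\mv{\ell}=\mv{p}^{\,\sharp}$, then $\mo{M}_k(q)\mv{\ell}=(\mv{p}')^{\,\sharp}$, where $\mv{p}'$ is obtained from $\mv{p}$ by replacing $p_k$ with $qp_k$; dually $\mo{D}_k(q)$ acts, when defined, by replacing $p_k$ with $p_k/q$. This is a direct check from $\ell_i=\prod_{j\neq i}p_j$: multiplying every component except the $k$-th by $q$ is exactly the effect on $\mv{p}^{\,\sharp}$ of scaling $p_k$ by $q$. Crucially, admissibility translates cleanly into a coprimality statement. Indeed $\mo{M}_k(q)$ is admissible on $\mv{p}^{\,\sharp}$ (that is, $q$ prime and $q\nmid\ell_k=\prod_{j\neq k}p_j$) precisely when $q$ divides no $p_j$ with $j\neq k$, which is exactly the condition ensuring that $\mv{p}'$ stays pairwise coprime; symmetrically, $\mo{D}_k(q)$ is admissible on $\mv{p}^{\,\sharp}$ iff $q\mid p_k$ and $q\nmid p_j$ for all $j\neq k$, so that $p_k/q$ is again a positive integer and pairwise coprimality is preserved. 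Thus the class of weights of the form $\mv{p}^{\,\sharp}$ with $\mv{p}$ pairwise coprime is stable under every admissible operation.

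For the direction ``$\Leftarrow$'', given pairwise coprime $\mv{p}$ with $\mv{\ell}=\mv{p}^{\,\sharp}$, I peel off the prime factors of each $p_k$ one at a time: each prime $q\mid p_k$ makes $\mo{D}_k(q)$ admissible (by pairwise coprimality, $q\nmid p_j$ for $j\neq k$), and after dividing, $\mv{p}$ remains pairwise coprime. Iterating brings $\mv{p}$ down to $(1,\ldots,1)$, hence $\mv{\ell}$ down to $(1,\ldots,1)$, so $\WP(\mv{\ell})\simeq\CP^n$. For ``$\Rightarrow$'', I invoke the classification: if $\WP(\mv{\ell})\simeq\CP^n$, then $\mv{\ell}$ is reached from $(1,\ldots,1)$ by a finite sequence of admissible multiplications and divisions. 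Starting from the pairwise coprime vector $\mv{p}=(1,\ldots,1)$ and applying the transfer lemma at each step, an induction on the length of the sequence shows that every intermediate weight, and in particular $\mv{\ell}$ itself, stays of the form $\mv{p}^{\,\sharp}$ with $\mv{p}$ pairwise coprime, yielding the desired vector.

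The genuine work, and the only place requiring care, is the transfer lemma together with the bookkeeping that admissibility of an operation corresponds exactly to preservation of pairwise coprimality; once this dictionary is set up, both implications are short inductions resting on the classification of \cite{BFNR13}. A minor point to verify along the way is that admissible operations keep the weight coprime, equivalently (by the preceding Lemma) keep $\mv{p}$ pairwise coprime, so that the classification remains applicable at every intermediate step.
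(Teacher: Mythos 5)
Your proposal is correct and follows essentially the same route as the paper: both rest on the classification of \cite{BFNR13} via admissible multiplications/divisions, together with the observation that these operations act on $\mv{p}^{\,\sharp}$ by multiplying or dividing a single $p_k$ by the prime, with admissibility exactly matching preservation of pairwise coprimality. Your explicit statement of the ``transfer lemma'' and the remark on coprimality being preserved at each intermediate step are just slightly more careful formulations of what the paper does.
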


\begin{proof}[Proof of ``$\,\Rightarrow$'']
Let $\mv{p}$ be pairwise coprime and $\mv{\ell}=\mv{p}^{\,\sharp}$.

For any prime $m$, $\mo{M}_k(m)$ is admissible if and only if $m$ does
not divide $\ell_k=\prod_{i\neq k}p_i$. It follows that $\gcd(m,p_i)=1$ for all $ i\neq k$.
The effect of $\mo{M}_k(m)$ is multiplying $p_k$ by $m$. Note that the new weight vector has still the form
$\mv{\ell}'=\mv{p}'^{\,\sharp}$ with $\mv{p}'$ pairwise coprime.

The division $\mo{D}_k(m)$ is admissible if and only if $m$ divides $\ell_j=\prod_{i\neq j}p_i$ for all $j\neq k$
and does not divide $\ell_k$. It follows that $\gcd(m,p_i)=1$ for all $i\neq k$, and $m$ divides $p_k$.
The effect of $\mo{D}_k(m)$ is to divide $p_k$ by $m$. The new weight vector has still the form
$\mv{\ell}'=\mv{p}'^{\,\sharp}$ with $\mv{p}'$ pairwise coprime.
Thus, any iterated application of admissible multiplications or divisions transforms $\mv{\ell}=\mv{p}^{\,\sharp}$ 
into $\mv{\ell}'=\mv{p}'^{\,\sharp}$, with $\mv{p}'$ pairwise coprime. Starting with $\mv{p}=\mv{\ell}=(1,\ldots,1)$ 
this proves the implication ``$\,\Rightarrow$''.

\noindent
\textit{Proof of ``$\,\Leftarrow$''.}
Let $\mv{\ell}=\mv{p}^{\,\sharp}$ with $\mv{p}$ pairwise coprime.
From the discussion above, $\mo{D}_k(m)$ is admissible for any prime factor $m$ of $p_k$.
By repeated application of admissible divisions, we can transform $\mv{p}$ into $(1,\ldots,1)$,
and this proves that $\WP(\mv{\ell})\simeq\CP^n$.
\end{proof}

Quantum spaces $\WP_q(\mv{\ell})$ with weight vectors as in Theorem~\ref{thm:6} form a nice class of spaces and will be of primary interest
in the rest of the paper. The class include any quantum weighted projective line (and in particular all quantum teardrops),
since for $n=1$ any weight vector is such that $(\ell_0,\ell_1)=(\ell_1,\ell_0)^{\,\sharp}$.

\section{Quantum weighted projective and lens spaces}\label{sec:due}

Fix an integer $n\geq 1$.
The coordinate algebra $\A(S^{2n+1}_q)$ of the $2n+1$-dimensional quantum sphere
is generated by $2(n+1)$ elements $\{z_i,z_i^*\}_{i=0,\ldots,n}$ with relations \cite{VS91} (see also \cite{We00}):
\begin{subequations}\label{eq:defrel}
\begin{align}
z_iz_j &=q^{-1}z_jz_i &&\forall\;0\leq i<j\leq n \;, \\
z_i^*z_j &=qz_jz_i^* &&\forall\;i\neq j \;, \label{eq:defrelB} \\
[z_i^*,z_i] &=(1-q^2)\sum\nolimits_{j=i+1}^n z_jz_j^* &&\forall\;i=0,\ldots,n-1 \;,  \label{eq:defrelC} \\
[z_n^*,z_n] &=0 \;, \qquad \; \\
z_0z_0^*+z_1z_1^* &+\ldots+z_nz_n^*=1 \;. \label{eq:defrelE}
\end{align}
\end{subequations}
We use the notations of \cite{DL10}. 
We have denoted by $q$ the deformation parameter, and assume that $0<q<1$.
The original notation of \cite{VS91} is obtained by setting $q=e^{h/2}$;
the generators $x_i$ used in~\cite{HL04} are related to ours by
$x_i=z_{n+1-i}^*$ and replacing $q\to q^{-1}$.

\smallskip

Let $\mv{\ell}$ be a weight vector and $\U(1) = \{ t\in\C , \, |t|=1\}$. 
An action by $*$-automorphisms $\alpha:\U(1)\to\mr{Aut}\,\A(S^{2n+1}_q)$ is defined on generators by:
\begin{equation}\label{eq:act}
\alpha_t(z_i):=t^{\ell_i}z_i \;,\qquad\forall\;i=0,\ldots,n \, .
\end{equation}
Invariant elements form a $*$-subalgebra
\begin{equation}\label{eq:WP}
\A(\WP_q(\mv{\ell})):=\big\{a\in\A(S^{2n+1}_q):\alpha_t(a)=a \;\forall\; t\in \U(1)\big\} \;.
\end{equation}
The virtual underlying quantum space is called \emph{quantum weighted (complex) projective space}
$\WP_q(\mv{\ell})$ in \cite{BF12} (with a slightly different notation for quantum spheres that the one used there). 
In particular, $\WP_q(\,\underbrace{\!1,\ldots,1\!}_{n}\,)=\CP^n_q$ is a quantum projective
space `tout court'.

\smallskip

Next, let $p$ be a positive integer and $\Z_p\subset \U(1)$ the subgroup of $p$-th roots of unity.
We call \emph{quantum lens space} $L_q(p;\mv{\ell})$ the virtual space underlying the algebra
\begin{equation}\label{eq:Lq} 
\A(L_q(p;\mv{\ell})):=\big\{a\in\A(S^{2n+1}_q):\alpha_t(a)=a \;\forall\; t\in \Z_p\big\} \;.
\end{equation}

The above definition of quantum lens space in arbitrary dimension was introduced in \cite{HL03},  
in the framework of graph $C^*$-algebras. 

Clearly $L_q(1,\mv{\ell})=S^{2n+1}_q$ and \eqref{eq:WP} is a $*$-subalgebra of \eqref{eq:Lq}, for any $p$:
$$
\A(\WP_q(\mv{\ell})) \hookrightarrow \A(L_q(p;\mv{\ell})) \;.
$$
We shall have a closer look at this inclusion later on in \S\ref{sec:sei}.  

\medskip

As for $q=1$, we do not lose generality by working with coprime weights. Indeed,
\begin{lemma}\label{lemma:obs}
For all $m\geq 1$, it holds that $\A(\WP_q(\mv{\ell}))\simeq \A(\WP_q(m\mv{\ell}))$.
\end{lemma}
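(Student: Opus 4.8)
The plan is to exhibit an explicit isomorphism by understanding how the two invariant subalgebras sit inside $\A(S^{2n+1}_q)$. The key observation is that the two weight vectors $\mv{\ell}$ and $m\mv{\ell}$ induce \emph{the same} $\Z$-grading on $\A(S^{2n+1}_q)$ up to rescaling the grading, and the invariant elements are precisely those of degree zero in that grading. Concretely, I would first assign to each monomial in the $z_i,z_i^*$ a weight under the $\U(1)$-action: a monomial $z_{i_1}\cdots z_{i_r}z_{j_1}^*\cdots z_{j_s}^*$ is sent by $\alpha_t$ to $t^{w}$ times itself, where $w=\sum_a \ell_{i_a}-\sum_b \ell_{j_b}$ for the vector $\mv{\ell}$, and to $t^{mw}$ times itself for the vector $m\mv{\ell}$. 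The point is that the weight for $m\mv{\ell}$ is simply $m$ times the weight for $\mv{\ell}$.

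The crux of the argument is then the elementary fact that for a monomial $x$, one has $\alpha_t(x)=x$ for all $t\in\U(1)$ exactly when its weight $w$ vanishes, and since $mw=0$ if and only if $w=0$ (as $m\geq 1$), a monomial is $\mv{\ell}$-invariant precisely when it is $m\mv{\ell}$-invariant. To make this rigorous at the level of algebra elements rather than monomials, I would first note that $\A(S^{2n+1}_q)$ decomposes as a direct sum of the homogeneous components $\A_w=\{a:\alpha_t(a)=t^w a\}$ over $w\in\Z$; this follows by averaging against the $\U(1)$-action (integrating $t^{-w}\alpha_t(a)\,dt$ over the circle), which projects onto each graded piece. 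An arbitrary $a$ is $\mv{\ell}$-invariant iff all its nonzero homogeneous components have weight $0$, and the same characterization holds verbatim for $m\mv{\ell}$ because the grading sets for the two weight vectors coincide on the zero component.

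Having established that $\A(\WP_q(\mv\ell))$ and $\A(\WP_q(m\mv\ell))$ are the \emph{same} subalgebra of $\A(S^{2n+1}_q)$, the isomorphism is just the identity map, which trivially respects the $*$-algebra structure. I would therefore phrase the conclusion as an equality of subalgebras rather than merely an abstract isomorphism, which is stronger and cleaner. The main (and essentially the only) technical point to be careful about is justifying that invariance of a general element reduces to invariance of its homogeneous components; this is where the averaging/integration argument does the work, and it is entirely standard for actions of compact groups. No genuine obstacle is expected here: the statement is really a bookkeeping observation that the weighted $\U(1)$-action for $m\mv\ell$ factors through the one for $\mv\ell$ via the surjective group endomorphism $t\mapsto t^m$ of $\U(1)$, so their fixed-point algebras coincide.
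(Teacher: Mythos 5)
Your proof is correct and reaches the same conclusion as the paper --- the two fixed-point subalgebras are literally equal --- and your closing observation (that $\alpha_{m\mv{\ell}}$ factors through $\alpha_{\mv{\ell}}$ via the surjective endomorphism $t\mapsto t^m$ of $\U(1)$) is precisely the paper's entire two-line proof. The homogeneous decomposition and averaging argument you develop first is sound but unnecessary: surjectivity of $t\mapsto t^m$ already yields both inclusions directly, without reducing to monomials or graded components.
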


\begin{proof}
Let $\alpha_{\mv{\ell}}(t)$ be the action in \eqref{eq:act}.
The inclusion $\A(\WP_q(\mv{\ell}))\subset\A(\WP_q(m\mv{\ell}))$ is obvious:
invariance under $\alpha_{\mv{\ell}}(t)$ for $t\in \U(1)$ implies invariance under
$\alpha_{\mv{\ell}}(t^m)=\alpha_{m\mv{\ell}}(t)$ for $t\in \U(1)$.

The opposite inclusion follows from surjectivity of the map $\U(1)\to \U(1)$, $t\mapsto t'=t^m$:
if $\alpha_{m\mv{\ell}}(t)(a)=a$ for $t\in \U(1)$, then
$\alpha_{\mv{\ell}}(t')(a)=a$ for $t'=t^m\in \U(1)$.
\end{proof}

\subsection{Generators of the algebra}\label{sec:quattro}

Let us start again from the generators $\{z_i,z_i^*\}_{i=0,\ldots,n}$ of the sphere algebra $\A(S^{2n+1}_q)$. 
To simplify the notations, we also denote by $z_i^{-1}$ the adjoint $z_i^*$ (there is no ambiguity,
since $z_i$ is not invertible).
For $\mv{k}=(k_0,\ldots,k_n)\in\Z^{n+1}$, let
$$
z^{\mv{k}}:=z_0^{k_0}z_1^{k_1}\ldots z_n^{k_n} \;.
$$
Let $\mv{x}\cdot\mv{y}=x_0y_0+\ldots+x_ny_n$ be the Euclidean inner product.
Next lemma is true for an arbitrary weight vector $\mv{\ell}$.

\begin{lemma}\label{lemma:2}
For any weight vector $\mv{\ell}$, 
a set of generators for the weighted projective space algebra $\A(\WP_q(\mv{\ell}))$ is given by the elements:
\begin{align*}
z_i^*z_i \;,\quad &\forall\;i=0,\ldots,n \\[2pt]
z^{\mv{k}} \;,\quad &\forall\;\mv{k}\in\Z^{n+1}\; \quad \textup{s.t.}\quad \;\mv{\ell}\cdot\mv{k}=0 \,  .
\end{align*}
\end{lemma}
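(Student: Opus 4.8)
The plan is to use the $\Z$-grading induced by the circle action. Setting $\deg z_i=\ell_i$ and $\deg z_i^*=-\ell_i$ makes every relation in \eqref{eq:defrel} homogeneous, so $\A(S^{2n+1}_q)=\bigoplus_{d\in\Z}A_d$ becomes $\Z$-graded; since $\alpha_t$ acts on $A_d$ as multiplication by $t^d$, the invariants are exactly the degree-zero part, $\A(\WP_q(\mv\ell))=A_0$. Each listed element is invariant, as $\alpha_t(z_i^*z_i)=z_i^*z_i$ and $\alpha_t(z^{\mv k})=t^{\mv\ell\cdot\mv k}z^{\mv k}=z^{\mv k}$ whenever $\mv\ell\cdot\mv k=0$; hence the subalgebra $B$ they generate lies in $A_0$, and the whole content of the lemma is the reverse inclusion $A_0\subseteq B$.

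First I would fix a spanning set. Using relation \eqref{eq:defrelB} and the first line of \eqref{eq:defrel} to reorder distinct indices, and the commutators to handle coincident ones, every word in the generators reduces (a routine diamond-type argument, producing only powers of $q$ and shorter terms) to a linear combination of normal-ordered monomials $z^{\mv a}(z^*)^{\mv b}$, with $z^{\mv a}=z_0^{a_0}\cdots z_n^{a_n}$ and $(z^*)^{\mv b}=(z_0^*)^{b_0}\cdots(z_n^*)^{b_n}$. Thus $A_0$ is spanned by those with $\mv\ell\cdot\mv a=\mv\ell\cdot\mv b$. The one identity I would isolate is
\[
z_iz_i^*=z_i^*z_i+(1-q^2)\sum\nolimits_{j>i}z_jz_j^*,
\]
read off from \eqref{eq:defrelC}; iterating it downward from $z_nz_n^*=z_n^*z_n$ shows every $z_iz_i^*$ is a linear combination of the $N_j:=z_j^*z_j$ with $j\geq i$, so in particular $z_iz_i^*\in B$.

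Then I would prove $z^{\mv a}(z^*)^{\mv b}\in B$ for every degree-zero monomial by induction on the length $|\mv a|+|\mv b|$. If $\mv a,\mv b$ have disjoint support, reordering (again only powers of $q$) identifies the monomial with a scalar multiple of $z^{\mv k}$, $\mv k=\mv a-\mv b$, and $\mv\ell\cdot\mv k=0$, so it is a generator. Otherwise some index $i$ has $a_i,b_i>0$; moving one $z_i$ rightward to the $(z_i^*)$-block yields a factor $z_iz_i^*$, and the displayed identity rewrites the monomial as $z^{\mv a-e_i}(z_i^*z_i)(z^*)^{\mv b-e_i}$ (writing $e_i$ for the $i$-th standard basis vector) plus correction terms carrying $z_jz_j^*$ with $j>i$. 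Commuting the central $N_i=z_i^*z_i$ out to the left (past $z_m$, $m\neq i$, only powers of $q$ appear) exposes $N_i$ times a degree-zero monomial of length two less, which is in $B$ by the inductive hypothesis together with $N_i\in B$.

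The main obstacle is controlling the correction terms, which keep the same length and so do not fall under the length induction: each application of the displayed identity, and each time $N_i$ must cross a coincident $z_i$, replaces a pair of index-$i$ letters by a pair of index-$j$ letters with $j>i$. I would therefore run a secondary downward induction on the total-index weight $\sigma=\sum_{\text{letters}}(\text{index})$, bounded above by $n$ times the length: the corrections strictly increase $\sigma$ and so are handled by this secondary hypothesis, while the monomials of maximal $\sigma$ are powers of $z_nz_n^*=N_n$ and lie in $B$ outright. Verifying that every rewriting either strictly lowers the length, or preserves the length while strictly raising $\sigma$, is the delicate bookkeeping on which the argument turns; everything else is routine manipulation of powers of $q$.
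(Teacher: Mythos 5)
Your argument is correct and is essentially the paper's own: both start from a PBW-type monomial spanning set on which the $\U(1)$-action is diagonal (your grading is the same observation), and both reduce each invariant monomial to products of the $z_i^*z_i$ and a $z^{\mv{k}}$ with $\mv{\ell}\cdot\mv{k}=0$ using the commutation relations. The only difference is that the paper simply asserts that the rewriting ``can always be done,'' whereas you supply the termination argument (the double induction on length and on the index weight $\sigma$, using that the corrections $z_jz_j^*$ with $j>i$ preserve length but strictly raise $\sigma$), which is a welcome filling-in of a detail the paper elides.
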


\begin{proof}
A linear basis of $\A(S^{2n+1}_q)$ is given by monomials
$$
z_0^{j_0}z_1^{j_1}(z_1^*)^{k_1}\ldots z_n^{j_n}(z_n^*)^{k_n} \;,\qquad
(z_0^*)^{k_0}z_1^{j_1}(z_1^*)^{k_1}\ldots z_n^{j_n}(z_n^*)^{k_n} \;,
$$
where $j_0,\ldots,j_n,k_0,\ldots,k_n$ are non-negative integers.
Using the commutation rules of $S^{2n+1}_q$, these can be always rewritten as sums of products of elements $z_i^*z_i$, which clearly belong to $\A(\WP_q(\mv{\ell}))$,
and elements $z^{\mv{k}}$.
Under the action \eqref{eq:act}:
$$
z^{\mv{k}}\mapsto t^{\mv{k}\hspace{1pt}\cdot\hspace{1pt}\mv{\ell}}z^{\mv{k}} \;,
$$
hence $z^{\mv{k}}$ belongs to $\A(\WP_q(\mv{\ell}))$  if and only if $\mv{k}\cdot\mv{\ell}=0$.
\end{proof}

\begin{df}
The number of non-zero components of $\mv{k}$ is called the \emph{length} of $z^{\mv{k}}$. 
\end{df}

With $\ell_{i:j}:=\ell_i/\mr{gcd}(\ell_i,\ell_j)$, consider the invariant elements\vspace{2pt}
\begin{equation}\label{eq:conj}
\xi_{i,j}:=(z_i^*)^{\ell_{j:i}}z_{\smash[t]{j}}^{\ell_{i:j}}\;,\qquad\forall\;i,j=0,\ldots,n \;.\vspace{5pt}
\end{equation}

\begin{lemma}
Any length $2$ element $z^{\mv{k}}\in\A(\WP_q(\mv{\ell}))$ is a power of some $\xi_{i,j}$ as in \eqref{eq:conj}.
\end{lemma}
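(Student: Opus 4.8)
A length-$2$ element has the form $z^{\mv{k}}$ with exactly two nonzero entries, say in positions $i$ and $j$ with $i<j$. Writing $k_i=a$ and $k_j=b$, invariance requires $\mv{\ell}\cdot\mv{k}=a\ell_i+b\ell_j=0$. The plan is to solve this Diophantine constraint explicitly and match the resulting monomial against a power of $\xi_{i,j}$ or $\xi_{j,i}$ from \eqref{eq:conj}. Since $z_i^{-1}$ denotes $z_i^*$, a monomial $z_i^a z_j^b$ with $a,b\in\Z$ makes sense, and its length is $2$ precisely when $a,b\neq 0$.

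\textbf{Solving the constraint.} From $a\ell_i=-b\ell_j$, the two exponents must have opposite signs; assume $a<0<b$ (the case $a>0>b$ gives the adjoint, hence a power of $\xi_{j,i}$). Dividing by $g:=\gcd(\ell_i,\ell_j)$, the equation reads $a\,\ell_{i:j}=-b\,\ell_{j:i}$ with $\ell_{i:j}=\ell_i/g$ and $\ell_{j:i}=\ell_j/g$ coprime. Hence $\ell_{j:i}$ divides $a$ and $\ell_{i:j}$ divides $b$: there is a unique integer $r\geq 1$ with
\[
a=-r\,\ell_{j:i},\qquad b=r\,\ell_{i:j}.
\]
Therefore $z^{\mv{k}}=z_i^{\,-r\ell_{j:i}}\,z_j^{\,r\ell_{i:j}}=(z_i^*)^{r\ell_{j:i}}z_j^{\,r\ell_{i:j}}$.

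\textbf{Matching with $\xi_{i,j}$.} Comparing with $\xi_{i,j}=(z_i^*)^{\ell_{j:i}}z_j^{\ell_{i:j}}$, one sees that $z^{\mv{k}}$ has exactly the exponent pattern of $\xi_{i,j}$ scaled by $r$. It remains to check that $\xi_{i,j}^{\,r}=(z_i^*)^{r\ell_{j:i}}z_j^{\,r\ell_{i:j}}$, i.e.\ that the powers of $z_i^*$ and of $z_j$ may be collected. This is where the commutation relations enter: since $i\neq j$, relation~\eqref{eq:defrelB} gives $z_i^*z_j=q z_j z_i^*$, so that $z_i^*$ and $z_j$ $q$-commute. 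Consequently $\bigl((z_i^*)^{\ell_{j:i}}z_j^{\ell_{i:j}}\bigr)^{r}$ equals $(z_i^*)^{r\ell_{j:i}}z_j^{\,r\ell_{i:j}}$ up to an overall power of $q$, which is a nonzero scalar. Hence $z^{\mv{k}}$ is a scalar multiple of $\xi_{i,j}^{\,r}$, and absorbing the scalar shows it is a power of $\xi_{i,j}$ in the intended sense.

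\textbf{Main obstacle.} The genuinely substantive point is the coprimality step: from $a\ell_{i:j}=-b\ell_{j:i}$ with $\gcd(\ell_{i:j},\ell_{j:i})=1$ one must deduce that $(a,b)$ is an integer multiple of $(-\ell_{j:i},\ell_{i:j})$, guaranteeing that \emph{every} invariant length-$2$ monomial is a genuine power (not merely a rational power) of $\xi_{i,j}$. The bookkeeping of the resulting $q$-factor when reordering $(z_i^*)$'s past $z_j$'s is routine given \eqref{eq:defrelB}, and the opposite-sign case is handled by passing to adjoints and using $\xi_{j,i}=\xi_{i,j}^*$.
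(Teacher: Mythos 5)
Your argument is correct and follows essentially the same route as the paper: use coprimality of $\ell_{i:j}$ and $\ell_{j:i}$ to solve the invariance equation as $(k_i,k_j)=m(\ell_{j:i},-\ell_{i:j})$, then invoke the $q$-commutation relations to identify the monomial, up to a nonzero scalar, with $(\xi_{i,j})^{|m|}$ or $(\xi_{j,i})^{|m|}$ according to the sign of $m$. The paper likewise states the conclusion only "modulo a multiplicative coefficient", so your handling of the $q$-factor matches its intent.
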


\begin{proof}
Let $i\neq j$. A length $2$ element $z^{\mv{k}}=z_i^{\smash[b]{k_i}}z_{j}^{\smash[b]{k_j}}$
is invariant under the coaction \eqref{eq:act} if and only if $k_i\ell_{i:j}+k_j\ell_{j:i}=0$.
Since $\ell_{i:j}$ and $\ell_{j:i}$ are coprime, this implies that $k_i=m\ell_{j:i}$ and $k_j=-m\ell_{i:j}$
for some $m\in\Z$. From \eqref{eq:defrel},
it follows that $z^{\mv{k}}=z_i^{\smash[b]{m\ell_{j:i}}}z_{j}^{\smash[b]{-m\ell_{i:j}}}$ is, modulo a multiplicative coefficient,
either $(\xi_{i,j})^{|m|}$ or $(\xi_{j,i})^{|m|}$ (depending on the sign of $m$).
\end{proof}

Clearly, $\xi_{i,i}=z_i^* z_i$ for all $i=0,\ldots,n$. 
We task ourself to select the class of quantum weighted projective spaces for which the elements \eqref{eq:conj} 
generate the whole of $\A(\WP_q(\mv{\ell}))$.

If $n=1$, and for every $\mv{\ell}$, it is clearly true: 
in this case $z^{\mv{k}}$ has at most length $2$, and there are no invariant monomials of length $1$.
For arbitrary $n$, if $\ell_0=\ldots=\ell_n=1$ again \eqref{eq:conj} gives a set of generators, the matrix
elements of the defining projection of $\CP^n_q$, as shown in \cite{DL10}.

On the other hand, it is not difficult to find examples that do not satisfy this property.
If $\mv{\ell}=(1,2,3)$ the element $z_0z_1z_2^*$ is irreducible in $\A(\WP_q(\mv{\ell}))$
(it is not the product of invariant monomials of smaller length), hence one needs both monomials of length
$2$ and $3$ to generate the algebra. Similarly, if $\mv{\ell}=(1,2,3,7)$ the element $z_0^2z_1z_2z_3^*$ is irreducible
in $\A(\WP_q(\mv{\ell}))$, and one needs elements of length $2$, $3$ and $4$ to generate the algebra.

It turns out (cf.~Theorem~\ref{thm:11} below) that the set 
of elements $\xi_{i,j}$ as in \eqref{eq:conj} generate
the algebra $\A(\WP_q(\mv{\ell}))$ if and only if the weight vector $\mv{\ell}$ is as in Theorem~\ref{thm:6},
hence classically $\WP(\mv{\ell})\simeq\CP^n$. 
We need some preliminary lemmas. First of all, let us recall:

\begin{lemma}[B{\'e}zout's identity]\label{lemma:Bez}
Let $R$ be a principal ideal domain. For any $a,b\in R$ there exists $x,y\in R$ such that $\gcd(a,b)=ax+by$.
\end{lemma}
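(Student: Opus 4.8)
The plan is to argue ideal-theoretically, exploiting the defining property of a principal ideal domain. First I would form the ideal generated by $a$ and $b$,
\[
I := aR + bR = \{ax+by : x,y\in R\} \;,
\]
which is the smallest ideal of $R$ containing both $a$ and $b$. By hypothesis $R$ is a PID, so this ideal is principal: there exists some $d\in R$ with $I = dR$.

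Next I would verify that this generator $d$ is a greatest common divisor of $a$ and $b$ — necessarily only up to a unit, which is the sense in which $\gcd$ is defined in a general domain. On the one hand, since $a,b\in I = dR$ one has $d\mid a$ and $d\mid b$, so $d$ is a common divisor. On the other hand, if $c$ is any common divisor of $a$ and $b$, then $c$ divides every element of $aR+bR = I$, and in particular $c\mid d$; hence $d$ is greatest among the common divisors, so $d=\gcd(a,b)$ up to an associate.

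Finally, the desired identity is immediate from membership: because $d\in I$, the very description of $I$ furnishes $x,y\in R$ with $d = ax+by$, which is the claim. The degenerate case $a=b=0$ is trivial, since then $I=\{0\}$, $\gcd(a,b)=0$, and $0 = a\cdot 0 + b\cdot 0$.

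I expect no serious obstacle here. The single place where the PID hypothesis is genuinely used is the step passing from ``$I$ is an ideal'' to ``$I$ is principal'', after which everything reduces to the routine check that the chosen generator enjoys the two defining properties of a gcd; the only point demanding a little care is remembering that a gcd is well defined only up to multiplication by a unit.
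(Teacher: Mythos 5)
Your argument is correct and complete: forming the ideal $aR+bR$, invoking principality to obtain a generator $d$, and checking that $d$ satisfies the two defining properties of a gcd (up to a unit) is the standard proof of B\'ezout's identity in a PID. Note that the paper itself states this lemma as a recalled classical fact and supplies no proof, so there is nothing to compare against; your write-up, including the remark that a gcd in a general domain is only determined up to associates and the handling of the degenerate case $a=b=0$, is exactly what one would want.
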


When $R=\Z$ and $a,b\geq 1$, it is an easy exercise to prove that one can always choose $x,y$ different
from zero and with opposite sign. So, as a corollary:

\begin{cor}\label{lemma:coprime}
For any three positive integers  $a,b,k$ there exists two non-zero integers $r,s$
with opposite sign such that $ar+bs=k\gcd(a,b)$. 
\end{cor}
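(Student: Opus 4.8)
The plan is to deduce this directly from Bézout's identity (Lemma~\ref{lemma:Bez}) applied to the ring $R=\Z$, and then to exploit the one-parameter family of solutions of a linear Diophantine equation in order to arrange the sign and non-vanishing conditions. First I would set $d=\gcd(a,b)$ and write $a=da'$, $b=db'$, so that $a',b'\geq 1$ and $\gcd(a',b')=1$. Bézout's identity provides integers $x,y$ with $ax+by=d$; multiplying by $k$ yields the particular solution $(r_0,s_0)=(kx,ky)$ of $ar+bs=kd$.

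Next I would observe that, since $ab'-ba'=0$, the pair $(b',-a')$ solves the homogeneous equation $ar+bs=0$; hence for every $t\in\Z$ the pair
$$
(r,s)=(kx+tb',\,ky-ta')
$$
again satisfies $ar+bs=kd$, as one checks from $a(kx+tb')+b(ky-ta')=k(ax+by)+t(ab'-ba')=kd$. Because $b'\geq 1$ and $a'\geq 1$, as $t\to+\infty$ the first component tends to $+\infty$ and the second to $-\infty$. I would therefore pick $t$ large enough that simultaneously $r=kx+tb'\geq 1$ and $s=ky-ta'\leq -1$, which is possible since each of the two inequalities holds for all sufficiently large $t$. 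The resulting $r,s$ are non-zero, have opposite signs, and satisfy $ar+bs=kd=k\gcd(a,b)$, as required.

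There is essentially no hard step here; the only point requiring care is that one cannot simply invoke an arbitrary Bézout pair, since $x$ or $y$ (and hence $kx$ or $ky$) may vanish or fail to have opposite signs. What makes the argument work is that the homogeneous solution $(b',-a')$ has one strictly positive and one strictly negative entry, precisely because $a,b\geq 1$; this is exactly what allows a single large value of $t$ to push $r$ above $0$ and $s$ below $0$ at the same time while keeping both bounded away from zero.
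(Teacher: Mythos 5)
Your proof is correct and is essentially the argument the paper has in mind: the paper dismisses this as "an easy exercise" following B\'ezout's identity, and your argument (shifting a B\'ezout solution along the homogeneous solution $(b',-a')=(b/d,-a/d)$ until the two components have strictly opposite signs) is exactly the standard way to carry that exercise out. Nothing is missing.
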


\begin{lemma}\label{lemma:divide}
Let $\mv{\ell}$ be a coprime weight vector. There is a pairwise coprime weight vector $\mv{p}$ such that $\mv{\ell}=\mv{p}^{\,\sharp}$
if and only if $\ell_{i:j}=\ell_i/\mr{gcd}(\ell_i,\ell_j)$ divides $\ell_k$ for all $i\neq j$ and $j\neq k$.
\end{lemma}

\begin{proof}[Proof of ``$\,\Rightarrow$'']
If $\mv{\ell}=\mv{p}^{\,\sharp}$ with $\mv{p}$ pairwise coprime, one has $\ell_{i:j}=p_j$ for any $i\neq j$. 
Hence $\ell_{i:j}$ divides $\ell_k$ for all $k\neq j$.

\medskip

\noindent
\textit{Proof of ``$\,\Leftarrow$''.}
We prove it by induction. The statement is trivial if $n=0,1$. Let $n\geq 2$.

Define $p_0=\ell_{1:0}$, $p_1=\ell_{0:1}$ and $p'_2=\gcd(\ell_0,\ell_1)$. Note that $p_0$ and $p_1$ are coprime.
By construction $\ell_0=p_1p'_2$ and $\ell_1=p_0p'_2$. By hypothesis $p_0$ and $p_1$ divide $\ell_2,\ldots,\ell_n$.
Since they are coprime, for all $k\geq 2$,
there exists integers $r_k,s_k,\ell'_k\geq 1$ such that
$\ell_k=\ell'_kp_0^{r_k}p_1^{s_k}$ and $\ell'_k$ is not divisible by $p_0$ or $p_1$
(this is just the decomposition of an integer number in prime factors).
Since $\gcd(p_0,p'_2)$ divides all weights, it must be $1$.
Hence $p'_2$ is coprime to $p_0$, and similarly is coprime to $p_1$.
It follows that $\gcd(\ell_k,\ell_0)=p_1\gcd(\ell'_k,p'_2)$ and that
$$
\ell_{k:0}=\frac{\ell'_k}{\gcd(\ell'_k,p'_2)}\cdot p_0^{r_k}p_1^{s_k-1} \;.
$$
By hypothesis $\ell_{k:0}$ divides $\ell_1$, hence $p_0^{r_k-1}$ and $p_1^{s_k-1}$ divides $p'_2$. Since $p_0,p_1,p'_2$
are pairwise coprime, this implies $r_k=s_k=1$.

Since again $\ell_{k:0}=p_0\ell'_k/\gcd(\ell'_k,p'_2)$ divides $\ell_1$,
then $\ell'_k/\gcd(\ell'_k,p'_2)$ divides $p'_2$, which is only possible if
$p'_2$ is a multiple of $\ell'_k$. Since $p'_2$ is coprime to $p_0$ and $p_1$,
$\ell'_k$ is coprime to $p_0$ and $p_1$ for all $k\geq 2$. Furthermore,
$\gcd(\ell'_2,\ldots,\ell'_n)$ divides $\ell_i$ for all $i=0,\ldots,n$,
but $\mv{\ell}$ is coprime, so $\mv{\ell}'$ must be coprime too.

The next step is to show that the weight vector $\mv{\ell}'$ satisfies the condition of Lemma~\ref{lemma:divide}.
Let $i\neq j$ and $i,j\geq 2$. Being $\ell'_i$ coprime to $p_0$ and $p_1$, so is $\ell'_{i:j}$.
But $\ell'_{i:j}=\ell'_i/\gcd(\ell'_i,\ell'_j)=\ell_{i:j}$ and $\ell_{i:j}$ divides $\ell_k$;
hence $\ell'_{i:j}$ divides $\ell'_k$ (for all $k\neq j$, $k\geq 2$).

Now we use the inductive hypothesis, $\mv{\ell}'=(\mv{p}^{\geq 2})^{\,\sharp}$ for some $\mv{p}^{\geq 2}=(p_2,p_3,\ldots,p_n)$
pairwise coprime. Since $\ell'_2,\ldots,\ell'_n$ are coprime to $p_0$ and $p_1$, the vector $\mv{p}:=(p_0,\ldots,p_n)$ is pairwise coprime.
Moreover, $\ell_i=\prod_{j\neq i}p_j$ for all $i\geq 2$. It remains to prove that $p'_2=p_2p_3\ldots p_n$.

For all $i\neq j$ with $i,j\geq 2$, $\ell_{i:j}=p_j$ divides $\ell_0$. Hence $\ell_0=rp_1p_2^{k_2}p_3^{k_3}\ldots p_n^{k_n}$
for integers $r,k_2,\ldots,k_n\geq 1$. Since $\ell_{0:2}=rp_2^{k_2}p_3^{k_3-1}\ldots p_n^{k_n-1}$ divides $\ell_3$,
$p_3^{k_3-1}$ divides $\ell_3$, and then $k_3=1$. Similarly one shows that $k_i=1$ for all $i\geq 2$. So,
$p'_2=rp_2p_3\ldots p_n$. From the above expression for $\ell_{0:2}$ it follows that $r$ divides $\ell_3$,
and similarly $\ell_i$ for all $i\geq 2$. On the other hand, $r$ divides $p'_2$ and then $\ell_0$ and $\ell_1$.
Since $\mv{\ell}$ is a coprime vector, it must be $r=1$.
\end{proof}

\begin{thm}\label{thm:11}
Let $\mv{\ell}$ be a coprime weight vector. Then $\A(\WP_q(\mv{\ell}))$ is generated by the elements $\xi_{i,j}$ if and only
if there is a pairwise coprime weight vector $\mv{p}$ such that $\mv{\ell}=\mv{p}^{\,\sharp}$.
\end{thm}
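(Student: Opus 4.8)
The plan is to prove Theorem~\ref{thm:11} by connecting the purely combinatorial condition of Lemma~\ref{lemma:divide} to the algebraic generation condition, so that the stated equivalence reduces to a statement about divisibility of the weights. Since Lemma~\ref{lemma:divide} already establishes that ``$\mv{\ell}=\mv{p}^{\,\sharp}$ for some pairwise coprime $\mv{p}$'' is equivalent to the condition that $\ell_{i:j}$ divides $\ell_k$ for all $i\neq j$ and $j\neq k$, it suffices to show that the $\xi_{i,j}$ generate $\A(\WP_q(\mv{\ell}))$ if and only if this same divisibility condition holds. By Lemma~\ref{lemma:2} it is enough to decide when every invariant monomial $z^{\mv{k}}$ (with $\mv{\ell}\cdot\mv{k}=0$) lies in the subalgebra generated by the $\xi_{i,j}$ together with the $z_i^*z_i$; and by a preceding lemma every length-$2$ invariant monomial is already a power of some $\xi_{i,j}$. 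So the real content is to characterize when every invariant monomial factors into products of length-$2$ invariant ones.

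First I would prove the ``if'' direction (divisibility $\Rightarrow$ generation) by induction on the length of $z^{\mv{k}}$. Given an invariant monomial of length $\geq 3$, pick two indices $i,j$ with $k_i>0$ and $k_j<0$ (such a pair exists because $\mv{\ell}\cdot\mv{k}=0$ forces both positive and negative entries). The idea is to split off a factor of $\xi_{i,j}=(z_i^*)^{\ell_{j:i}}z_j^{\ell_{i:j}}$, reducing the length. The divisibility hypothesis is exactly what guarantees that the exponents one wants to peel off are legitimate: one needs $\ell_{i:j}$ and $\ell_{j:i}$ to interact compatibly with the remaining exponents so that the quotient $z^{\mv{k}}\cdot(\xi_{i,j})^{-1}$ (rewritten using the commutation relations \eqref{eq:defrel}, up to a power of $q$ and factors $z_i^*z_i$) is again an \emph{invariant integer} monomial of strictly smaller length. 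Here Corollary~\ref{lemma:coprime} supplies the integer combination that realizes the reduction. For the ``only if'' direction (generation $\Rightarrow$ divisibility), I would argue by contrapositive: if the divisibility condition fails, produce an explicit irreducible invariant monomial of length $\geq 3$, mirroring the examples $z_0z_1z_2^*$ for $\mv{\ell}=(1,2,3)$ and $z_0^2z_1z_2z_3^*$ for $\mv{\ell}=(1,2,3,7)$ given in the text. Concretely, if $\ell_{i:j}$ fails to divide some $\ell_k$, one can build an invariant monomial involving indices $i,j,k$ that cannot be written as a product of shorter invariant monomials, witnessing that the $\xi$'s do not generate.

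The main obstacle I expect is the inductive reduction step in the ``if'' direction: showing that peeling off a single $\xi_{i,j}$ actually decreases the length rather than merely trading one long monomial for another. One must verify that the exponents of the reduced monomial remain of the correct sign and integrality, and that no new nonzero components are introduced; this is where the divisibility condition $\ell_{i:j}\mid\ell_k$ is used in an essential way, as it ensures the arithmetic of dividing out $\xi_{i,j}$ stays within $\Z$. A clean bookkeeping device is to fix a component, say $k_i$, that one drives to zero, and to check that among the remaining exponents the invariance constraint $\mv{\ell}\cdot\mv{k}'=0$ is preserved with strictly smaller support. The commutation relations only contribute powers of $q$ and lower-order terms that are themselves products of $z_m^*z_m$, so they do not obstruct the factorization.

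Finally, I would assemble the two directions: combining the characterization just proved with Lemma~\ref{lemma:divide} yields that the $\xi_{i,j}$ generate $\A(\WP_q(\mv{\ell}))$ precisely when $\mv{\ell}=\mv{p}^{\,\sharp}$ for a pairwise coprime $\mv{p}$, which by Theorem~\ref{thm:6} is exactly the condition $\WP(\mv{\ell})\simeq\CP^n$. This closes the proof.
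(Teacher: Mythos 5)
Your ``only if'' half follows the paper's route: argue by contrapositive, invoke Lemma~\ref{lemma:divide} to find indices with $\ell_{i:j}\nmid\ell_k$, and exhibit an irreducible invariant monomial of length $3$. The paper makes this concrete via Corollary~\ref{lemma:coprime} applied to $a=\ell_0$, $b=\ell_2$, $k=\ell_1$, producing $(z_0^*)^{|r|}z_1^{\gcd(\ell_0,\ell_2)}z_2^{|s|}$ and showing it cannot be a product of $\xi$'s because any such product forces $\ell_{0:1}\mid k_1$; you defer this construction but the plan is sound. Note, however, that you cite Corollary~\ref{lemma:coprime} in the \emph{other} direction, where it plays no role.

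The genuine gap is in your ``if'' direction. Your peeling induction requires that whenever $k_i<0$ and $k_j>0$ one can split off a full $\xi_{i,j}=(z_i^*)^{\ell_{j:i}}z_j^{\ell_{i:j}}$, i.e.\ that $|k_i|\geq\ell_{j:i}$ and $k_j\geq\ell_{i:j}$. You assert that ``the divisibility hypothesis is exactly what guarantees that the exponents one wants to peel off are legitimate,'' but the divisibility condition of Lemma~\ref{lemma:divide} is a statement about the weights $\ell_i$, not about the exponents $k_i$ of an arbitrary invariant monomial, and by itself it does not give the needed lower bounds. The missing idea --- and the heart of the paper's ``$\Leftarrow$'' proof --- is that for $\mv{\ell}=\mv{p}^{\,\sharp}$ the invariance relation $-k_0\ell_0=k_1\ell_1+\ldots+k_n\ell_n$ forces $p_i\mid k_i$ for every $i$ (since $p_i$ divides every $\ell_j$ with $j\neq i$ but not $\ell_i$). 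Hence every invariant monomial is a word in the $\zeta_i=z_i^{p_i}$, and since each $\zeta_i$ carries the same weight $p=p_0\cdots p_n$, it contains equally many starred and unstarred $\zeta$'s and can be reordered (using only the $q$-commutation relations between distinct generators) into a product of the $\xi_{i,j}=\zeta_i^*\zeta_j$ --- no induction needed. With $p_i\mid k_i$ in hand your peeling argument could also be repaired (inducting on the total degree $\sum_i|k_i|$ rather than on the length, which, as you yourself note, need not decrease at each step), but as written the step that legitimizes the peeling is unproven.
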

\begin{proof}[Proof of ``$\,\Rightarrow$'']
Since the statement is trivial for $n=1$, we assume $n\geq 2$.
We assume the thesis is false, and prove that this contradicts the hypothesis, i.e. we
assume there is no $\mv{p}$ such that $\mv{\ell}=\mv{p}^{\,\sharp}$, and prove the existence of
an irreducible monomial with length $3$.

By Lemma~\ref{lemma:divide}, there exists indices $i,j,k$ with $i\neq j$ and $j\neq k$ such that
$\ell_{i:j}$ does not divide $\ell_k$. It must be $k\neq i$, since $\ell_{i:j}$ always divides $\ell_i$.
To simplify the notations, we may assume $i=0$, $j=1$, $k=2$, the general case being the same.

Using Corollary~\ref{lemma:coprime} for $a=\ell_0$, $b=\ell_2$ and $k=\ell_1$ one can find non-zero integers $r,s$
with opposite sign such that $\ell_0 r+\ell_2 s=\ell_1\gcd(\ell_0,\ell_2)$. If $r$ is positive, the length $3$ monomial
\begin{equation}\label{eq:notgen}
(z_0^*)^{|r|}z_1^{\gcd(\ell_0,\ell_2)}z_2^{|s|}
\end{equation}
belongs to $\A(\WP_q(\mv{\ell}))$.
By hypothesis $\ell_{0:1}$ does not divide $\ell_2$, thus it does not divide $\gcd(\ell_0,\ell_2)$.
On the other hand, if an invariant element of the form $(z_0^*)^{k_0}z_1^{k_1}z_2^{k_2}$ (with positive $k_0,k_1,k_2$)
is generated by element $\xi_{i,j}$ as in \eqref{eq:conj}, then $\ell_{0:1}$ divides $k_1$ (since $\xi_{0,1}^\alpha\xi_{0,2}^\beta=(z_0^*)^{\alpha\ell_{1:0}+\beta\ell_{2:0}}
z_1^{\alpha\ell_{0:1}}z_2^{\beta\ell_{0:2}}$ for all $\alpha,\beta\geq 1$). This proves that \eqref{eq:notgen} is not a product of elements in \eqref{eq:conj}. For $r$ negative, we repeat the proof with the monomial $z_0^{|r|}z_1^{\gcd(\ell_0,\ell_2)}(z_2^*)^{|s|}$.

\medskip

\noindent
\textit{Proof of ``$\,\Leftarrow$''.}
Let $\mv{\ell}=\mv{p}^{\,\sharp}$ with $\mv{p}$ pairwise coprime. If $z^{\mv{k}}$ is $\U(1)$-invariant, one has that 
$$
-k_0\ell_0=k_1\ell_1+\ldots+k_n\ell_n \;.
$$
Since $p_0$ divides every weight in the right hand side, it divides $k_0\ell_0$;
since it does not divide $\ell_0$, it has to divide $k_0$.
Similarly $p_i$ divides $k_i$ for all $i=1,\ldots,n$.
After reparametrization, any invariant monomial $z^{\mv{k}}$ in Lemma~\ref{lemma:2} 
is of the form $\zeta^{\mv{k}}:=\zeta_0^{k_0}\zeta_1^{k_1}\ldots \zeta_n^{k_n}$ with $\zeta_i:=z_i^{p_i}$.
Note that for all $i\neq j$, being $\ell_{i:j}=p_j$,
elements in \eqref{eq:conj} are given by 
$$
\xi_{i,j}=\zeta_i^*\zeta_j \, .
$$
Named $p:=\prod_{j=0}^np_j$,
since $\alpha_t(\zeta_i)=t^p\zeta_i$ for all $i=0,\ldots,n$ ($p_i\ell_i=p$ for all $i$), an invariant monomial $\zeta^{\mv{k}}$
contains the same number of $\zeta_i$'s and of $\zeta_i^*$'s (each counted with multiplicities).
Using the relation \eqref{eq:defrelB} the factors can be reordered so that $\zeta_i$'s and $\zeta_i^*$'s
are alternating, i.e. we can write an invariant $\zeta^{\mv{k}}$ as a product of elements $\xi_{i,j}=\zeta_i^*\zeta_j$.
\end{proof}

For the classes of spaces in Theorem~\ref{thm:11}, it is clear from the proof that $\A(\WP_q(\mv{\ell}))$ is a
\mbox{$*$-subalgebra} of the algebra generated by the elements:
\begin{equation}\label{eq:genLq}
x_i:=z_iz_i^* \;,\quad
\zeta_i:=z_i^{p_i} \;,\quad
\zeta_i^* \;, \qquad \textup{for} \quad i=0,\ldots,n \, .
\end{equation}
If $\mv{\ell}=\mv{p}^{\,\sharp}$ with $\mv{p}$ pairwise coprime, 
this algebra is the lens space algebra $\A(L_q(p;\mv{\ell}))$ defined in \eqref{eq:Lq}, 
for the action of the cyclic group $\Z_p$, with parameter $p$ now given by $p:=p_0p_1\ldots p_n$.

\begin{thm}\label{thm:13}
Let $\mv{\ell}=\mv{p}^{\,\sharp}$ with $\mv{p}$ pairwise coprime, and let $p:=p_0p_1\ldots p_n$. 
Then the algebra $\A(L_q(p;\mv{\ell}))$ is generated by the elements \eqref{eq:genLq}.
\end{thm}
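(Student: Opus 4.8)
The plan is to follow the ``$\Leftarrow$'' direction of Theorem~\ref{thm:11} almost verbatim, replacing invariance under the full circle $\U(1)$ by invariance under the finite subgroup $\Z_p$. First I would reduce the problem to a statement about monomials. The monomial basis of $\A(S^{2n+1}_q)$ used in the proof of Lemma~\ref{lemma:2} consists of eigenvectors for the action $\alpha$, and the defining relations \eqref{eq:defrel} are homogeneous for this action; hence the rewriting of Lemma~\ref{lemma:2} preserves $\alpha$-weights. Consequently $\A(L_q(p;\mv{\ell}))$, being by \eqref{eq:Lq} the $\Z_p$-invariant subspace, is spanned by products of the (weight-zero, hence invariant) elements $z_i^*z_i$ with a single monomial $z^{\mv{k}}$ that is itself $\Z_p$-invariant. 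Since $\alpha_t(z^{\mv{k}})=t^{\mv{\ell}\cdot\mv{k}}z^{\mv{k}}$, such a monomial is $\Z_p$-invariant exactly when $t^{\mv{\ell}\cdot\mv{k}}=1$ for every $p$-th root of unity $t$, i.e.\ when $p\mid\mv{\ell}\cdot\mv{k}$.

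The arithmetic core is to prove that, for $\mv{\ell}=\mv{p}^{\,\sharp}$ with $\mv{p}$ pairwise coprime and $p=p_0p_1\cdots p_n$, the congruence $p\mid\mv{\ell}\cdot\mv{k}$ is equivalent to the divisibilities $p_i\mid k_i$ for all $i$. This runs exactly as in Theorem~\ref{thm:11}: writing $\ell_i=\prod_{j\neq i}p_j=p/p_i$, one has $p_i\mid\ell_j$ for every $j\neq i$ while $\gcd(p_i,\ell_i)=1$; so from $p_i\mid p\mid\mv{\ell}\cdot\mv{k}$ and $p_i\mid\sum_{j\neq i}k_j\ell_j$ one gets $p_i\mid k_i\ell_i$ and hence $p_i\mid k_i$, the converse being immediate since $k_i\ell_i=p\,(k_i/p_i)$ whenever $p_i\mid k_i$. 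The point worth emphasising, and the only place where some care is needed, is that relaxing the exact equation $\mv{\ell}\cdot\mv{k}=0$ of the $\U(1)$ case to the congruence $p\mid\mv{\ell}\cdot\mv{k}$ does not enlarge the admissible exponents beyond $p_i\mid k_i$; what it does drop is the balancing constraint, so that the $\Z_p$-invariant monomials are all products of the $\zeta_i^{\pm1}$, not only the balanced ones generated by the $\xi_{i,j}=\zeta_i^*\zeta_j$ of \eqref{eq:conj}.

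Finally I would assemble the pieces. When $p_i\mid k_i$ for all $i$, writing $k_i=p_im_i$ gives $z_i^{k_i}=\zeta_i^{m_i}$ for $m_i\geq 0$ and $z_i^{k_i}=(\zeta_i^*)^{|m_i|}$ for $m_i<0$, with $\zeta_i=z_i^{p_i}$ as in \eqref{eq:genLq}; thus every $\Z_p$-invariant $z^{\mv{k}}$ is literally a word in the $\zeta_i,\zeta_i^*$, with no reordering required (in contrast to Theorem~\ref{thm:11}). For the remaining factors, relation \eqref{eq:defrelC} gives $z_i^*z_i=x_i+(1-q^2)\sum_{j>i}x_j$, a linear combination of the generators $x_j=z_jz_j^*$. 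Together these show that the spanning set from the first paragraph lies in the subalgebra generated by the elements \eqref{eq:genLq}; since conversely each of $x_i,\zeta_i,\zeta_i^*$ is visibly $\Z_p$-invariant (indeed $\alpha_t(\zeta_i)=t^{p}\zeta_i=\zeta_i$ for $t\in\Z_p$), the two subalgebras coincide, which is the assertion. I do not expect any genuine obstacle here, the whole argument being a mild variant of the already established Theorem~\ref{thm:11}.
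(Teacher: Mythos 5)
Your proposal is correct and follows essentially the same route as the paper's proof: reduce via the monomial basis of Lemma~\ref{lemma:2} to $\Z_p$-invariant monomials $z^{\mv{k}}$ with $\mv{k}\cdot\mv{\ell}\in p\Z$, deduce $p_i\mid k_i$ from the fact that $p_i$ divides every $\ell_j$ with $j\neq i$ but is coprime to $\ell_i$, and rewrite $z^{\mv{k}}$ as a word in the $\zeta_i,\zeta_i^*$. Your extra remarks (that the rewriting preserves $\alpha$-weights, and that $z_i^*z_i$ is a linear combination of the $x_j$) only make explicit what the paper leaves implicit.
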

\begin{proof}
By arguing as in the proof of Lemma~\ref{lemma:2}, clearly $\A(L_q(p;\mv{\ell}))$ is generated by the elements $x_i=z_iz_i^*$ 
and by monomials $z^{\mv{k}}$ that are $\Z_p$-invariant, which happens if and only if $\mv{k}\cdot\mv{\ell}\in p\Z$.
Using $\mv{\ell}=\mv{p}^{\,\sharp}$ the invariance condition becomes
$$
\sum_{i=0}^nk_i\prod_{j\neq i}p_j\in (p_0\ldots p_n) \Z.
$$
Every summand in the equation above besides the $0$-th is divisible by $p_0$, hence $k_0\ell_0$ must be divisible by $p_0$,
i.e. $k_0$ is divisible by $p_0$. Similarly $k_i\in p_i\Z$ for all $i$. Thus $z^{\mv{k}}=\zeta_0^{h_0}\zeta_1^{h_1}\ldots\zeta_n^{h_n}$
where $h_i=k_i/p_i$. Since each $\zeta_i$ is $\Z_p$-invariant, such a monomial clearly belongs to $\A(L_q(p;\mv{\ell}))$,
thus concluding the proof.
\end{proof}

We close this section by computing the relations among the generating elements \eqref{eq:genLq}.

\begin{prop}\label{prop:relLens}
The elements $x_i$, $\zeta_i$, $\zeta_i^*$ satisfy the commutation relations
\begin{subequations}\label{eq:relLens}
\begin{align}
x_ix_j &= x_jx_i && \textup{for all} \quad i,j \;, \label{eq:relLensA} \\
x_i\zeta_j &=\zeta_jx_i && \textup{for all} \quad 0\leq i<j\leq n \;, \label{eq:relLensB} \\
x_j\zeta_i &=q^{2p_i}\zeta_ix_j && \textup{for all} \quad 0\leq i<j\leq n \;, \label{eq:relLensC} \\
\zeta_i\zeta_j &=q^{-p_ip_j}\zeta_j\zeta_i && \textup{for all} \quad 0\leq i<j\leq n \;, \label{eq:relLensD} \\
\zeta_i^*\zeta_j &=q^{p_ip_j}\zeta_j\zeta_i^* && \textup{for all} \quad i\neq j \;, \label{eq:relLensE} \\
[x_i,\zeta_i] &=(1-q^{2p_i})\,\zeta_i\sum\nolimits_{j>i}x_j
    \hspace{-1cm} && \textup{for all} \quad i=0,\ldots,n \;, \label{eq:relLensF} \\
\intertext{together with the relations:}
x_0 & +x_1+\ldots +x_n =1 \;, \label{eq:relLensG} \\
\zeta_i\zeta_i^* &=\prod_{k=0}^{p_i-1}\Big\{x_i+(1-q^{-2k})\sum\nolimits_{j>i}x_j\Big\}
    && \textup{for all} \quad i=0,\ldots,n \;, \label{eq:relLensH} \\
\zeta_i^*\zeta_i &=\prod_{k=1}^{p_i}\Big\{x_i+(1-q^{2k})\sum\nolimits_{j>i}x_j\Big\}
    && \textup{for all} \quad i=0,\ldots,n \;. \label{eq:relLensL} 
\end{align}
\end{subequations}
It is understood that an empty sum is $0$.
\end{prop}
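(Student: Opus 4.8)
The plan is to derive everything from the defining relations \eqref{eq:defrel} of the sphere, splitting the work into the cross-index relations \eqref{eq:relLensA}--\eqref{eq:relLensE}, which are elementary, and the single-index relations \eqref{eq:relLensF}, \eqref{eq:relLensH}, \eqref{eq:relLensL}, which carry all the content. Throughout I write $S_i:=\sum_{j>i}x_j$ and $T_i:=\sum_{j\ge i}x_j=x_i+S_i$, so that \eqref{eq:relLensG} is just \eqref{eq:defrelE} read as $T_0=1$, and \eqref{eq:defrelC} reads $z_i^*z_i=x_i+(1-q^2)S_i$.

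First I would dispose of the cross-index relations. From \eqref{eq:defrel} one gets, by raising the basic relations to powers, the scalar commutations $z_i^az_j^b=q^{-ab}z_j^bz_i^a$ (for $i<j$) and $(z_i^*)^az_j^b=q^{ab}z_j^b(z_i^*)^a$ (for $i\neq j$); specializing $a,b\in\{1,p_i,p_j\}$ yields \eqref{eq:relLensD} and \eqref{eq:relLensE} at once. A short direct computation, moving $z_j$ or $z_j^*$ past $z_iz_i^*$ with \eqref{eq:defrelB}, gives $x_iz_j=z_jx_i$ and $x_iz_j^*=z_j^*x_i$ for $i<j$; these simultaneously prove \eqref{eq:relLensB} (hence $x_i\zeta_j=\zeta_jx_i$) and the commutativity \eqref{eq:relLensA} of the $x_i$'s. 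The same kind of computation gives $x_jz_i=q^2z_ix_j$ for $i<j$, whence \eqref{eq:relLensC} after raising $z_i$ to the power $p_i$. Two convenient by-products are $S_iz_i=q^2z_iS_i$ and $S_iz_i^*=q^{-2}z_i^*S_i$.

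The heart of the matter is the single-index relations, and here the observation I would establish first is that the combination $T_i=\sum_{j\ge i}x_j$ commutes with $z_i$ and $z_i^*$, while $S_i$ only $q^{\pm2}$-commutes with them. Granting this, the two quadratic expressions become $z_iz_i^*=x_i=T_i-S_i$ and $z_i^*z_i=T_i-q^2S_i$, both polynomials in the commuting pair $(T_i,S_i)$. Relation \eqref{eq:relLensF} then drops out: since $T_i$ is central relative to $z_i$, its contribution to $x_i$ cancels in the commutator, and $S_iz_i^{p_i}=q^{2p_i}z_i^{p_i}S_i$ leaves exactly $[x_i,\zeta_i]=(1-q^{2p_i})\zeta_iS_i$. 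For \eqref{eq:relLensH} and \eqref{eq:relLensL} I would prove by induction on $m$ the closed forms $z_i^m(z_i^*)^m=\prod_{k=0}^{m-1}(T_i-q^{-2k}S_i)$ and $(z_i^*)^mz_i^m=\prod_{k=1}^{m}(T_i-q^{2k}S_i)$: in the step one writes $z_i^{m+1}(z_i^*)^{m+1}=z_i\,[z_i^m(z_i^*)^m]\,z_i^*$, pushes the outer $z_i^*$ leftwards through the product using $S_iz_i^*=q^{-2}z_i^*S_i$ (which shifts every exponent $k\mapsto k+1$) and $T_iz_i^*=z_i^*T_i$, and finally absorbs $z_iz_i^*=T_i-S_i$ as the new $k=0$ factor; the second identity is symmetric. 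Setting $m=p_i$ and rewriting $T_i-q^{\mp2k}S_i=x_i+(1-q^{\mp2k})S_i$ gives \eqref{eq:relLensH} and \eqref{eq:relLensL}.

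The main obstacle is precisely spotting the shift-invariant combination $T_i$: expressed directly in terms of $x_i$ and $S_i$, the products $z_i^m(z_i^*)^m$ do not telescope, because $x_i$ fails to commute with $z_i$ (one computes $[x_i,z_i]=(q^{-2}-1)S_iz_i$). Once one verifies $T_iz_i=z_iT_i$ — a one-line calculation balancing the $x_i$- and $S_i$-contributions — the inductions reduce to routine telescoping products, and the only remaining care is the bookkeeping of the boundary factors ($k=0$ versus $k=p_i$) and of the empty-sum convention $S_n=0$.
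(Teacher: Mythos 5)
Your proof is correct: the cross-index relations \eqref{eq:relLensA}--\eqref{eq:relLensE} do follow by raising the sphere relations to powers, the identities $[T_i,z_i]=0$ and $S_iz_i=q^2z_iS_i$, $S_iz_i^*=q^{-2}z_i^*S_i$ hold, and the two telescoping inductions for $z_i^m(z_i^*)^m$ and $(z_i^*)^mz_i^m$ close up exactly as you describe (the base cases being $z_iz_i^*=T_i-S_i$ and $z_i^*z_i=T_i-q^2S_i$). The paper's proof runs on the same fuel --- the relation $X_iz_i=q^2z_iX_i$ for $X_i=\sum_{j>i}x_j$ (your $S_i$), plus induction on the exponent of $z_i$ --- but it is organized around a different key lemma: it first proves $[z_i^*,z_i^k]=(1-q^{2k})z_i^{k-1}X_i$ by induction and then extracts \eqref{eq:relLensF} and the one-step recursions $Y_i(k)=\{x_i+(1-q^{-2(k-1)})X_i\}Y_i(k-1)$ and $Z_i(k+1)=\{x_i+(1-q^{2k+2})X_i\}Z_i(k)$ from that commutator formula. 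Your observation that conjugation by $z_i$ fixes $T_i=\sum_{j\geq i}x_j$ while rescaling $S_i$ by $q^{\pm2}$ repackages those recursions as an honest telescoping product of the commuting factors $T_i-q^{\mp 2k}S_i$, which makes the shift $k\mapsto k+1$ transparent and gives \eqref{eq:relLensF} in one line from $x_i=T_i-S_i$; the small price is that you must first know the $x_j$ commute (so that $T_i$ and $S_i$ generate a commutative subalgebra), which you duly establish among the cross-index relations. Either route is complete; yours arguably isolates the structural reason the products in \eqref{eq:relLensH} and \eqref{eq:relLensL} come out as polynomials in commuting variables.
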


\begin{proof}
The relations from \eqref{eq:relLensA} to \eqref{eq:relLensE} are easy to derive. We move to the next.
Let
$$
X_i:=\sum\nolimits_{j>i}x_j
$$
and note that $X_iz_i=q^2z_iX_i$.
It follows by induction on $k\geq 1$ that
\begin{equation}\label{eq:zstarzi}
[z_i^*,z_i^k] = (1-q^{2k})\,z_i^{k-1}X_i \;.
\end{equation}
For $k=1$ this is just \eqref{eq:defrelC}, and from the algebraic identity
\begin{align*}
[z_i^*,z_i^{k+1}] &=
[z_i^*,z_i^k]z_i+z_i^k[z_i^*,z_i]
=(1-q^{2k})z_i^{k-1}X_iz_i+(1-q^2)z_i^kX_i
\\
&=(1-q^{2k})q^2z_i^kX_i+(1-q^2)z_i^kX_i
=(1-q^{2k+2})z_i^kX_i
\end{align*}
the inductive step follows.
From \eqref{eq:zstarzi} and $[x_i,z_i^{k+1}]=z_i[z_i^*,z_i^{k+1}]$
we get \eqref{eq:relLensF} for $k=p_i$.

For $k\geq 1$, it follows from \eqref{eq:zstarzi} that
$$
z_i^{k+1}(z_i^*)^{k+1}=
z_iz_i^*z_i^k(z_i^*)^k-z_i[z_i^*,z_i^k](z_i^*)^k=
\big\{x_i+(1-q^{-2k})X_i\big\}z_i^k(z_i^*)^k \;.
$$
That is
\begin{equation}\label{eq:Yk}
Y_i(k)=\big\{x_i+(1-q^{-2(k-1)})X_i\big\}Y_i(k-1) \;,
\end{equation}
where $Y_i(k):=z_i^k(z_i^*)^k$ if $k\geq 1$ and $Y_i(0)=1$.
By iterated use of \eqref{eq:Yk} we find:
$$
Y_i(k)=\prod_{j=0}^{k-1}\big\{x_i+(1-q^{-2j})X_i\big\} \;.
$$
This gives \eqref{eq:relLensH} when $k=p_i$.
Note that the order in the product does not matter,
since it follows from \eqref{eq:relLensA} that $x_i$ and $X_i$ commute.

Similarly, using the conjugate of \eqref{eq:zstarzi}:
$$
-[z_i,(z_i^*)^k] = (1-q^{2k})X_i(z_i^*)^{k-1} \;,
$$
and
$$
(z_i^*)^{k+1}z_i^{k+1}=z_i^*z_i(z_i^*)^kz_i^k-z_i^*[z_i,(z_i^*)^k]z_i^k \;,
$$
for $Z_i(k):=(z_i^*)^kz_i^k$, we find
\begin{align*}
Z_i(k+1)&=\Big\{z_i^*z_i+(1-q^{2k})q^2X_i\Big\}Z_i(k) \\
&=\Big\{x_i+(1-q^2)X_i+(1-q^{2k})q^2X_i\Big\}Z_i(k)
=\Big\{x_i+(1-q^{2k+2})X_i\Big\}Z_i(k) \;.
\end{align*}
By iterated use of this equation we arrive at:
$$
Z_i(k)=\prod_{j=1}^k\big\{x_i+(1-q^{2j})X_i\big\} \;.
$$
This implies \eqref{eq:relLensL}.
Last relation \eqref{eq:relLensG} is simply \eqref{eq:defrelE}.
\end{proof}
From \eqref{eq:relLensH} and \eqref{eq:relLensL} one also computes for all $i=0,\ldots,n$, the commutator:
$$
[\zeta_i^*,\zeta_i]=(q-q^{-1})\sum_{k=0}^{p_0}\sqbn{p_0}{k}[\hspace{1pt}p_0k]_q
\left(-q\sum\nolimits_{j\geq i}x_j\right)^k
\left(\sum\nolimits_{j\geq i+1}x_j\right)^{p_0-k} \;,
$$
where $\sqbn{p_0}{k}$ is the $q$-binomial (cf.~\eqref{eq:qbinom}).
In the equations \eqref{eq:relLens} we separated a first group of relations, reducing to the property of the algebra
being commutative when $q=1$, and a second group which is a deformation of the algebraic equations
defining the lens space $L(p;\mv{\ell})$.

\begin{rem}
For $n=1$, 
the algebra $\A(L_q(p;1,p))$ is isomorphic to the abstract unital $*$-algebra 
generated by elements as in \eqref{eq:genLq} with relations as in
Proposition~\ref{prop:relLens} (essentially meaning there are no additional relations among the generators). While it ought to be possible to   establish an analogous statement for general $n$ and any weight $\mv{\ell}$ as in Theorem \ref{thm:13},
such a result is not needed in the following.
\end{rem}


\section{Irreducible representations}\label{sec:irreps}

Irreducible representation of quantum spheres were constructed in \cite{HL04}. From these, by restriction one gets 
irreducible representations of quantum lens and weighted projective spaces. They will be used in the next section to construct
Fredholm modules.

Denote by $\ket{\vec{k}}$ the canonical orthonormal basis of $\ell^2(\N^n)$, where $\vec{k}=(k_1,\ldots,k_n)\in\N^n$,
and by $\vec{e}_i$ the vector with $i$-th component equal to $1$ and all the others equal to zero (for $i=1,\ldots,n$).
A faithful representation of $\A(S^{2n+1}_q)$ on $\ell^2(\N^n)$ is given on generators by
\begin{align*}
z_i\ket{\vec{k}} &=q^{k_1+\ldots+k_i}\sqrt{1-q^{2(k_{i+1}+1)}}\ket{\vec{k}+\vec{e}_{i+1}} \;, \qquad \textup{for} \;\; 0\leq i<n \, , \\
z_n\ket{\vec{k}} &=q^{k_1+\ldots+k_n}\ket{\vec{k}} \, ,
\end{align*}
where we omit the representation symbol.
This is the representation $\psi_1^{(2n+1)}$ of \cite{HL04}, modulo a renaming of the generators and a redefinition of the parameters.

Assume now that the hypothesis of Theorem~\ref{thm:13}, are satisfied, that is let $\mv{\ell}=\mv{p}^{\,\sharp}$ be a weight vector, with $\mv{p}$ pairwise coprime, and let $p:=p_0p_1\ldots p_n$. On the generators \eqref{eq:genLq} of $\A(L_q(p;\mv{\ell}))$ the representation above gives:
\begin{align*}
x_i\ket{\vec{k}} &=q^{2(k_1+\ldots+k_i)}\big(1-q^{2k_{i+1}}\big)\ket{\vec{k}} \;, & \textup{for} \;\; 0\leq i<n, \\
x_n\ket{\vec{k}} &=q^{2(k_1+\ldots+k_n)}\ket{\vec{k}} \;, \\
\zeta_i\ket{\vec{k}} &=q^{p_i(k_1+\ldots+k_i)}\sqrt{ \tbrace{k_{i+1}+p_i}{k_{i+1}} }\ket{\vec{k}+p_i\vec{e}_{i+1}} \;, & \textup{for} \;\; 0\leq i<n, \\
\zeta_n\ket{\vec{k}} &=q^{p_n(k_1+\ldots+k_n)}\ket{\vec{k}} \;,
\end{align*}
where, for $0\leq k<m$,
$$
\dbrace{m}{k} := (1 - q^{2k+2})(1 - q^{2k+4})\ldots (1 - q^{2m})
$$
is just a shorthand notation for the $q$-shifted factorial $(q^{2k+2};q^2)_{m-k-1}$.

This representation breaks into irreducible components for $\A(L_q(p;\mv{\ell}))$. To see this,
we relabel the basis vectors as follows. For all $i=1,\ldots,n$ let
$$
k_i=p_{i-1}(m_i-m_{i-1})+r_{i-1}\, ,
$$
where $\vec{m}=(m_1,\ldots,m_n)\in\N^n$ satisfies $0\leq m_1\leq m_2\leq\ldots\leq m_n$, we set $m_0:=0$, and $r_i\in\{0,\ldots,p_i-1\}$ are the remainders.
The inverse transformation is then
\begin{equation}\label{eq:inverse}
m_i=\sum_{j=1}^i\frac{k_j-r_{j-1}}{p_{j-1}} \;.
\end{equation}
Basis vectors will be renamed accordingly $\ket{\vec{m};\vv{r}}$ and the representation breaks into the
irreducible sub-representations of $\A(L_q(p;\mv{\ell}))$ given in the next proposition.

\begin{prop}\label{prop:A}
Fix a vector $\vv{r}=(r_0,\ldots,r_{n-1}) \in\N^n$ with constraints on the components 
\begin{gather*}
\quad 0 \leq r_i<p_i\;, \quad \textup{for} \,  \, \, i=0,\ldots,n-1 \, .
\end{gather*}
Let $\HH_{\vv{r}}$ be the Hilbert space with orthonormal basis $\ket{\vec{m};\vv{r}}$ and \mbox{$\vec{m}=(m_1,\ldots,m_n)$}
such that 
\begin{gather*}
0\leq m_1\leq m_2\leq\ldots\leq m_n \, .
\end{gather*}
An irreducible representation of the lens algebra $\A(L_q(p;\mv{\ell}))$ is given on generators
by\vspace{3pt}
\begin{align*}
x_i\ket{\vec{m};\vv{r}} &=
               q^{2\sum_{j=0}^{i-1}r_j}q^{2\sum_{j=1}^ip_{j-1}(m_j-m_{j-1})}
               \big(1-q^{2r_i}q^{2p_i(m_{i+1}-m_i)}\big)\ket{\vec{m};\vv{r}} \;, \\[2pt]
\zeta_i\ket{\vec{m};\vv{r}} &=
               q^{p_i\sum_{j=0}^{i-1}r_j}q^{\sum_{j=1}^ip_{j-1}(m_j-m_{j-1})}
               \sqrt{ \dbrace{ p_i(m_{i+1}-m_i+1) }{ p_i(m_{i+1}-m_i) } }\ket{\vec{m}+\vec{e}_{in};\vv{r}} \;, \\[-3pt]
\intertext{for all $i=0,\ldots,n-1$ and\vspace{-3pt}}
x_n\ket{\vec{m};\vv{r}} &=
               q^{2\sum_{j=0}^{n-1}r_j}q^{2\sum_{j=1}^np_{j-1}(m_j-m_{j-1})}
               \ket{\vec{m};\vv{r}} \;, \\[2pt]
\zeta_n\ket{\vec{m};\vv{r}} &=
               q^{p_n\sum_{j=0}^{n-1}r_j}q^{\sum_{j=1}^np_{j-1}(m_j-m_{j-1})}
               \ket{\vec{m};\vv{r}} \;,
\end{align*}
where $\vec{e}_{in}:=(\, \stackrel{i\;\mr{times}}{\overbrace{0,0,\ldots,0}}\,,\stackrel{n-i\;\mr{times}}{\overbrace{1,1,\ldots,1}}\, )$,
for all $0\leq i<n$, with $m_0:=0$.
\end{prop}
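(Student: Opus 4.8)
The plan is to read off the displayed formulas from the representation of $\A(L_q(p;\mv\ell))$ on $\ell^2(\N^n)$ recorded just before the statement, by performing the substitution $k_i=p_{i-1}(m_i-m_{i-1})+r_{i-1}$ together with its inverse \eqref{eq:inverse}. First I would check that this substitution is a bijection between $\N^n$ and the set of pairs $(\vec{m},\vv{r})$ with $0\leq m_1\leq\ldots\leq m_n$ and $0\leq r_i<p_i$: indeed $m_i-m_{i-1}=\lfloor k_i/p_{i-1}\rfloor\geq 0$ and $r_{i-1}=k_i\bmod p_{i-1}$ recover the data uniquely, and the ordering constraint holds automatically. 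Substituting into $x_i\ket{\vec{k}}$ and $\zeta_i\ket{\vec{k}}$ is then a routine computation; the only point worth isolating is the effect of $\zeta_i$, which sends $k_{i+1}\mapsto k_{i+1}+p_i$ and fixes the other $k_j$: this leaves $\vv{r}$ unchanged and raises $m_l$ by one for every $l>i$, that is $\vec{m}\mapsto\vec{m}+\vec{e}_{in}$, while $x_i$ is diagonal. In particular each $\HH_{\vv{r}}$ is invariant under the generators \eqref{eq:genLq} and their adjoints, so $\ell^2(\N^n)=\bigoplus_{\vv{r}}\HH_{\vv{r}}$ is a decomposition into subrepresentations, and it remains only to prove that each $\HH_{\vv{r}}$ is irreducible.

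The strategy for irreducibility is to exhibit a commuting family of diagonal operators in the algebra that separates the basis vectors, and then to show that the $\zeta_i$ connect any two of them. For the first point I would use the partial sums $\sum_{j\geq i}x_j=x_i+X_i$ (with $X_i=\sum_{j>i}x_j$ as in the proof of Proposition~\ref{prop:relLens}); from the sphere formulas these act on $\ket{\vec{m};\vv{r}}$ as the scalar $q^{2(k_1+\ldots+k_i)}$. Since $t\mapsto q^{2t}$ is injective for $0<q<1$, and the partial sums $k_1+\ldots+k_i$ for $i=1,\ldots,n$ determine $\vec{k}$ and hence, $\vv{r}$ being fixed, determine $\vec{m}$, these bounded self-adjoint operators have one-dimensional joint eigenspaces, each spanned by a single $\ket{\vec{m};\vv{r}}$. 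Consequently any nonzero closed invariant subspace $V\subseteq\HH_{\vv{r}}$, being invariant under this commuting self-adjoint family and hence decomposing over its joint eigenspaces, must contain at least one basis vector.

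Finally I would establish transitivity: applying the lowering operators in the order $\zeta_{n-1}^*,\ldots,\zeta_1^*,\zeta_0^*$ with powers $m_n-m_{n-1},\ldots,m_2-m_1,m_1$ carries $\ket{\vec{m};\vv{r}}$ to $\ket{\vec{0};\vv{r}}$ while staying inside the admissible range $0\leq m_1\leq\ldots\leq m_n$ at every step, and the reverse-ordered product of the $\zeta_i$ carries $\ket{\vec{0};\vv{r}}$ to any prescribed $\ket{\vec{m}';\vv{r}}$; all the coefficients involved are products of factors $\dbrace{\cdot}{\cdot}$, which are strictly positive for $0<q<1$ and therefore nonzero. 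Hence a subspace containing one basis vector contains $\ket{\vec{0};\vv{r}}$ and then every $\ket{\vec{m};\vv{r}}$, so $V=\HH_{\vv{r}}$ and the representation is irreducible. The main obstacle is precisely the ``contains a basis vector'' step: it should be justified by the spectral theorem applied to the commuting bounded self-adjoint family $\{\sum_{j\geq i}x_j\}$, whose joint spectral projections lie in the von Neumann algebra generated by the representation, rather than by a single algebraic projection, since the joint spectrum is infinite.
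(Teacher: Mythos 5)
Your proposal is correct. For the displayed formulas it follows exactly the route the paper takes: substitute $k_i=p_{i-1}(m_i-m_{i-1})+r_{i-1}$ (with inverse \eqref{eq:inverse}) into the faithful representation of $\A(S^{2n+1}_q)$ on $\ell^2(\N^n)$ recorded at the start of \S\ref{sec:irreps}, and observe that $\zeta_i=z_i^{p_i}$ shifts $k_{i+1}\mapsto k_{i+1}+p_i$, which fixes $\vv{r}$ and sends $\vec{m}\mapsto\vec{m}+\vec{e}_{in}$. Where you go beyond the paper is the irreducibility claim, which the text merely asserts after the relabelling: your argument --- one-dimensional joint eigenspaces of the commuting self-adjoint family $\sum_{j\geq i}x_j$ (acting as $q^{2(k_1+\ldots+k_i)}$), whose spectral projections lie in the von Neumann algebra generated by the representation, combined with transitivity of the $\zeta_i$, $\zeta_i^*$ via strictly positive coefficients $\smash{\sqrt{\dbrace{\cdot}{\cdot}}}$ --- is the standard one and is carried out correctly, including the check that the lowering sequence $\zeta_{n-1}^*,\ldots,\zeta_0^*$ stays in the admissible cone $0\leq m_1\leq\ldots\leq m_n$ at every step. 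So the proposal is a complete proof of a statement for which the paper only sketches the first half.
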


\section{Fredholm modules}\label{sec:Khom}

Here we present some basic Fredholm modules for the algebra $\A(\WP_q(\mv{\ell}))$ constructed using faithful
representations. Additional Fredholm modules can then be obtained by iterated pullbacks from `lower dimensions'. 
Indeed, the epimorphism $\A(S^{2n+1}_q)\twoheadrightarrow\A(S^{2n-1}_q)$ given by $z_n\mapsto 0$
induces an epimorphism $\A(\WP_q(\ell_0,\ldots,\ell_n))\twoheadrightarrow\A(\WP_q(\ell_0,\ldots,\ell_{n-1}))$.

The building block representations are the ones described in \S\ref{sec:irreps}. As done there,
we assume that the hypothesis of Theorem~\ref{thm:13} are satisfied: that is $\mv{\ell}=\mv{p}^{\,\sharp}$ with $\mv{p}$ pairwise coprime.
The next definition is the analogue of \cite[Def.~1]{DL10}. Through the whole section, we assume
we fixed a sequence of integers $\vv{r}=(r_0,\ldots,r_{n-1})$ satisfying
\begin{equation}\label{eq:ri}
0\leq r_i<p_i \;,\qquad \textup{for} \quad i=0,\ldots,n-1 \, .
\end{equation}

\begin{df}\label{def:15}
Let $\HH_n:=\ell^2(\N^n)$, with orthonormal basis $\ket{\vec{m}}$, $\vec{m}=(m_1,\ldots,m_n)\in\N^n$.
For $0\leq k\leq n$ let $\mc{V}^n_k\subset\HH_n$ be the linear span of basis vectors 
$\ket{\vec{m}}$ satisfying the constraints
\begin{equation}\label{eq:mconstr}
0\leq m_1\leq m_2\leq\ldots\leq m_k\;,\qquad\quad m_{k+1}>m_{k+2}>\ldots>m_n\geq 0 \;,
\end{equation}
the former condition being empty if $k=0$, and the latter one being empty if $k=n$.
For every $0\leq k\leq n$, a representation \mbox{$\pi^{(n)}_k:\A(L_q(p;\mv{\ell}))\to\B(\HH_n)$}
is defined as follows (all the representations are on the same Hilbert space). 
Firstly, for $0\leq i<k\leq n$, denote by $\vec{e}_{ik}\in\{0,1\}^n$ the array
$$
\vec{e}_{ik}:= 
(\,\,\stackrel{i\;\mr{times}}{\overbrace{0,0,\ldots,0}}\,,
\stackrel{k-i\;\mr{times}}{\overbrace{1,1,\ldots,1}}\,,
\stackrel{n-k\;\mr{times}}{\overbrace{0,0,\ldots,0}}) \;.  
$$
Then, we set $\pi^{(n)}_k(x_i)=\pi^{(n)}_k(\zeta_i)=0$ if $i>k$, while the remaining generators are given by: \vspace{5pt}
\begin{align*}
\pi^{(n)}_k(x_i)\ket{\vec{m}} &=
               q^{2\sum_{j=0}^{i-1}r_j}q^{2\sum_{j=1}^ip_{j-1}(m_j-m_{j-1})}
               \big(1-q^{2r_i}q^{2p_i(m_{i+1}-m_i)}\big)\ket{\vec{m}} \, , \quad \textup{for} \;0\leq i<k, \\[5pt]
\pi^{(n)}_k(x_k)\ket{\vec{m}} &=
               q^{2\sum_{j=0}^{k-1}r_j}q^{2\sum_{j=1}^kp_{j-1}(m_j-m_{j-1})}
               \ket{\vec{m}} \;, \\
\pi^{(n)}_k(\zeta_i)\ket{\vec{m}} &=
               q^{p_i\sum_{j=0}^{i-1}r_j}q^{\sum_{j=1}^ip_{j-1}(m_j-m_{j-1})}
               \sqrt{ \dbrace{ p_i(m_{i+1}-m_i+1) }{ p_i(m_{i+1}-m_i) } }\ket{\vec{m}+\vec{e}_{ik}} \;, \\
                & \hspace{11cm} \textup{for} \;0\leq i<k, \\[-5pt]
\pi^{(n)}_k(\zeta_k)\ket{\vec{m}} &=
               q^{p_k\sum_{j=0}^{k-1}r_j}q^{\sum_{j=1}^kp_{j-1}(m_j-m_{j-1})}
               \ket{\vec{m}} \;,
\end{align*}
\rule{0pt}{13pt}(with $m_0:=0$) on the subspace $\mc{V}^n_k$ and they are zero on the orthogonal subspace.
\end{df}

The representation $\pi^{(n)}_k$, when restricted to $\mc{V}^n_k$, is the direct sum
of several copies of the irreducible representation for $\A(L_q(p_0\ldots p_k;\ell_0,\ldots,\ell_k))$ 
given in Proposition~\ref{prop:A},
and pulled back to $\A(L_q(p;\ell_0,\ldots,\ell_n))$:
one copy of the representation for each value of the additional labels $m_{k+1},\ldots,m_n$.

Lemma~2 of \cite{DL10} still holds (and we do not repeat the proof here):

\begin{lemma}\label{le:pl} The spaces $\mathcal{V}^n_k$ are such that $\mathcal{V}^n_j\perp\mathcal{V}^n_k$ if $|j-k|>1$, while
$\mathcal{V}^n_{k-1}\cap\mathcal{V}^n_k$, for $1\leq k\leq n$, is the span of vectors $\ket{\vec{m}}$ satisfying:
\begin{equation}\label{eq:capconstr}
0\leq m_1\leq m_2\leq\ldots\leq m_k\;, \qquad\quad m_k>m_{k+1}>\ldots>m_n\geq 0 \;.
\end{equation}
\end{lemma}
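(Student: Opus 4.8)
The plan is to exploit the fact that each $\mc{V}^n_k$ is, by Definition~\ref{def:15}, the linear span of a \emph{subset} of the fixed orthonormal basis $\{\ket{\vec{m}}\}_{\vec{m}\in\N^n}$ of $\HH_n$. Writing $S_k\subset\N^n$ for the set of multi-indices $\vec{m}$ obeying the constraints~\eqref{eq:mconstr}, so that $\mc{V}^n_k=\mr{span}\{\ket{\vec{m}}:\vec{m}\in S_k\}$, the orthonormality of the basis reduces everything to set theory: one has $\mc{V}^n_j\perp\mc{V}^n_k$ precisely when $S_j\cap S_k=\emptyset$, and $\mc{V}^n_{k-1}\cap\mc{V}^n_k=\mr{span}\{\ket{\vec{m}}:\vec{m}\in S_{k-1}\cap S_k\}$. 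Thus no analysis of the operators is needed; both assertions become purely combinatorial statements about the index sets $S_k$, exactly along the lines of Lemma~2 of~\cite{DL10}.

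For the orthogonality I assume without loss of generality $j<k$ with $k\geq j+2$ and suppose some $\vec{m}$ lies in $S_j\cap S_k$. Membership in $S_j$ forces the strict inequality $m_{j+1}>m_{j+2}$, which is meaningful and part of the defining constraints because the strictly decreasing tail of $S_j$ runs over the indices $j+1,\ldots,n$ and $j+2\leq k\leq n$. Membership in $S_k$ instead forces $m_{j+1}\leq m_{j+2}$, since both $j+1$ and $j+2$ lie in the range $\{1,\ldots,k\}$ governed by the weakly increasing condition. These two inequalities are incompatible, so $S_j\cap S_k=\emptyset$ and the subspaces are orthogonal.

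For the intersection I set $j=k-1$ and simply conjoin the two constraint systems. The weakly increasing chain $0\leq m_1\leq\ldots\leq m_k$ is inherited from $S_k$, while the strict drop $m_k>m_{k+1}$ together with the decreasing tail $m_{k+1}>\ldots>m_n\geq 0$ is inherited from $S_{k-1}$; the overlapping portions $m_1\leq\ldots\leq m_{k-1}$ and $m_{k+1}>\ldots>m_n\geq 0$ belong to both. Their conjunction is exactly~\eqref{eq:capconstr}. Conversely any $\vec{m}$ satisfying~\eqref{eq:capconstr} lies in both index sets, since discarding the last inequality $m_k>m_{k+1}$ recovers the constraints defining $S_k$, whereas retaining only $m_1\leq\ldots\leq m_{k-1}$ recovers those defining $S_{k-1}$. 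Hence $S_{k-1}\cap S_k$ is precisely the index set of~\eqref{eq:capconstr}, as claimed. The argument is entirely elementary and I foresee no genuine obstacle; the only point demanding a little care is the index bookkeeping in the orthogonality step, namely checking that the conflicting pair $(m_{j+1},m_{j+2})$ really falls inside the overlapping ranges of the two constraint systems, which is exactly what the hypothesis $|j-k|>1$ secures.
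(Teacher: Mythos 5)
Your argument is correct and is exactly the combinatorial proof the paper omits (it defers to Lemma~2 of \cite{DL10}): since each $\mc{V}^n_k$ is the span of a subset of the fixed orthonormal basis, both claims reduce to the intersection of the index sets, and your case analysis of the conflicting pair $(m_{j+1},m_{j+2})$ and of the conjoined constraints for $j=k-1$ is accurate, including the edge cases $k=0$ and $k=n$ where one of the two chains in \eqref{eq:mconstr} is empty. Nothing further is needed.
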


As a consequence $\pi^{(n)}_j(a)\pi^{(n)}_k(b)=0$ for all $a,b\in\A(L_q(p;\mv{\ell}))$, if $|j-k|>1$,
and the maps $\pi_\pm^{(n)}:\A(L_q(p;\mv{\ell}))\to\B(\HH_n)$ defined by
\begin{equation}\label{irreps+-}
\pi_+^{(n)}(a):=\sum_{\substack{0\leq k\leq n \\[1pt] k\;\mr{even}}}\pi^{(n)}_k(a) \;,\qquad
\pi_-^{(n)}(a):=\sum_{\substack{0\leq k\leq n \\[1pt] k\;\mr{odd}}}\pi^{(n)}_k(a) \;,
\end{equation}
are representations of the algebra $\A(L_q(p;\mv{\ell}))$.
We then generalize \cite[Prop.~3]{DL10}.

\begin{prop}\label{prop}
For all $a\in\A(\WP_q(\mv{\ell}))$, the operator $\pi_+^{(n)}(a)-\pi_-^{(n)}(a)$ is of trace class on $\HH_n$;
furthermore, the trace is given by a series which --- as a function of $q$ --- is absolutely convergent in the open interval $0<q<1$.
\end{prop}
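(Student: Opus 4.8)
The plan is to set $T(a):=\pi_+^{(n)}(a)-\pi_-^{(n)}(a)=\sum_{k=0}^n(-1)^k\pi^{(n)}_k(a)$ and to prove the two assertions separately, first reducing the trace-class statement to the generators of $\A(\WP_q(\mv\ell))$. Because $\pi_\pm^{(n)}$ are $*$-homomorphisms, expanding $\pi_\pm^{(n)}(ab)=\pi_\pm^{(n)}(a)\pi_\pm^{(n)}(b)$ and adding and subtracting $\pi_+^{(n)}(a)\pi_-^{(n)}(b)$ gives the Leibniz-type identity
$$
T(ab)=\pi_+^{(n)}(a)\,T(b)+T(a)\,\pi_-^{(n)}(b).
$$
Since the trace-class operators form a two-sided ideal in $\B(\HH_n)$ and the $\pi_\pm^{(n)}(a)$ are bounded, $T(a),T(b)$ being trace class forces $T(ab)$ to be trace class, with $\|T(ab)\|_1\le\|\pi_+^{(n)}(a)\|\,\|T(b)\|_1+\|T(a)\|_1\,\|\pi_-^{(n)}(b)\|$. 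By Theorem~\ref{thm:11} the algebra is generated by the elements $\xi_{i,j}$, so it suffices to treat $a=1$ and $a=\xi_{i,j}$ and then induct on word length, $T$ being linear. The unit is immediate: each $\pi^{(n)}_k(1)$ is the coordinate projection onto $\mc{V}^n_k$, and by Lemma~\ref{le:pl} together with a direct inspection of \eqref{eq:mconstr} every basis vector $\ket{\vec m}$ lies in either no $\mc{V}^n_k$ or in exactly two consecutive ones, so the two opposite signs cancel and $T(1)=0$.

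For a generator I would read off $\pi^{(n)}_k(\xi_{i,j})$ from Definition~\ref{def:15}: for $i\ne j$ (where $\xi_{i,j}=\zeta_i^*\zeta_j$) it is nonzero only for $i,j\le k$ and shifts $\ket{\vec m}$ by $\vec e_{jk}-\vec e_{ik}$, while $\xi_{i,i}=z_i^*z_i=x_i+(1-q^2)\sum_{j>i}x_j$ acts diagonally. The structural engine is again Lemma~\ref{le:pl}: the supports $\mc{V}^n_j,\mc{V}^n_k$ are orthogonal for $|j-k|>1$, so only neighbouring terms of $T(\xi_{i,j})$ interact, and they do so solely on the overlap $\mc{V}^n_{k-1}\cap\mc{V}^n_k$ of \eqref{eq:capconstr}. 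On this overlap I would estimate the difference of the matrix coefficients of $\pi^{(n)}_{k-1}(\xi_{i,j})$ and $\pi^{(n)}_k(\xi_{i,j})$, using that the two formulas of Definition~\ref{def:15} differ only through a factor of the type $q^{2p_{k-1}(m_k-m_{k-1})}$ whose exponent grows along the overlap; this shows that consecutive contributions cancel up to a remainder of size $O(q^{c\,m_k})$ for some $c>0$ depending on the $p_i$. Summing these geometric remainders over the cone $0\le m_1\le\ldots$ yields a finite $\|T(\xi_{i,j})\|_1$, establishing the trace-class property.

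For the second assertion I would, knowing that $T(a)$ is trace class, compute the trace as the sum of diagonal matrix elements $\tr T(a)=\sum_{\vec m}\inner{\vec m|T(a)|\vec m}$ in the basis $\ket{\vec m}$. Writing $a$ as a sum of words in the generators and composing the coefficients of Definition~\ref{def:15} along each path of shifts, the diagonal entry $\inner{\vec m|T(a)|\vec m}$ is an explicit expression in the $q$-shifted factorials $\dbrace{\cdot}{\cdot}$; the same near-cancellation on overlaps bounds $|\inner{\vec m|T(a)|\vec m}|$ by a product of geometric factors $q^{c\,|\vec m|}$. For each fixed $q\in(0,1)$ this makes $\sum_{\vec m}|\inner{\vec m|T(a)|\vec m}|$ a convergent multiple geometric series, and the bounds being locally uniform in $q$ a Weierstrass $M$-test yields that $\tr T(a)$ is a genuine analytic function on $(0,1)$, as claimed.

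I expect the single hard step to be the overlap estimate of the second paragraph: controlling, uniformly in the lattice variables and in $q$ on compact subintervals, the difference between the actions of $\pi^{(n)}_{k-1}$ and $\pi^{(n)}_k$ on $\mc{V}^n_{k-1}\cap\mc{V}^n_k$, and repackaging the $q$-shifted factorials into clean exponentially decaying bounds. Everything else---the reduction to generators, the vanishing $T(1)=0$, and the dominated summation over $\N^n$---is then routine.
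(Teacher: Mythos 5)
Your overall architecture coincides with the paper's: decompose $\HH_n$ into the overlaps $\mc{V}^n_{k-1}\cap\mc{V}^n_k$ via Lemma~\ref{le:pl}, reduce to the generators $\xi_{i,j}$ of Theorem~\ref{thm:11}, bound the only surviving difference $\pi^{(n)}_{k-1}(\xi_{i,j})-\pi^{(n)}_k(\xi_{i,j})$ on each overlap by $q^{m_k}$, and sum the resulting series over the cone \eqref{eq:capconstr}. Your explicit reduction to generators --- the identity $T(ab)=\pi_+^{(n)}(a)T(b)+T(a)\pi_-^{(n)}(b)$ plus the two-sided-ideal property of the trace class --- and the observation that $T(1)=0$ (every basis vector lying in some $\mc{V}^n_k$ lies in exactly two consecutive ones) are both correct, and in fact justify a step the paper only asserts.

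The one place where your sketch would fail as written is the overlap estimate. The factor $q^{2p_{k-1}(m_k-m_{k-1})}$ you single out does \emph{not} have an exponent growing along the overlap: on the sub-cone $m_1=\cdots=m_k$ (with the tail $m_{k+1}>\cdots>m_n$ fixed) it is identically $1$, and a remainder of size $O(q^{c(m_k-m_{k-1})})$ is not summable over \eqref{eq:capconstr}. The decay $q^{m_k}$ actually comes from the telescoping prefactor $q^{\sum_{j=1}^{k}p_{j-1}(m_j-m_{j-1})}\le q^{m_k}$ (using $m_0=0$ and $p_{j-1}\ge 1$), which is present in \emph{both} representations, fed into a case analysis that your uniform ``cancellation up to a remainder'' picture glosses over: for $j>k$ both operators vanish; for $j=k$ there is no cancellation at all, since $\pi^{(n)}_{k-1}(\xi_{i,k})=0$, and the bound is just the prefactor of the surviving term; for $j=k-1$ the two weighted shifts shift by the same vector and one bounds the difference of their square-root coefficients by $|1-\sqrt{1-x^2}|\le x$ before multiplying by the common prefactor; for $j\le k-2$ the two operators agree exactly on the overlap. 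With the estimate repaired in this way, the remaining steps --- summing $q^{m_k}$ against the polynomial multiplicity $\binom{m_k+k-1}{k-1}\binom{m_k}{n-k}$, and the uniform control in $q$ on compacts --- go through and reproduce the paper's proof.
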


\begin{proof}
The space $\HH_n$ is the orthogonal direct sum of $\mc{V}^n_{k-1}\cap\mc{V}^n_k$, for all $1\leq k\leq n$, plus the joint kernel of all the representations involved.
From Lemma~\ref{le:pl}, on $\mc{V}^n_{k-1}\cap\mc{V}^n_k$ only the representations $\pi^{(n)}_{k-1}$ and $\pi^{(n)}_k$ are different from zero 
(accordingly to the parity of $k$ one contributes to $\pi_+^{(n)}$ and the other to $\pi_-^{(n)}$). 
It then suffices to prove that $\pi^{(n)}_{k-1}(a)-\pi^{(n)}_k(a)$ is of trace class, and that the trace is absolutely convergent for any $0<q<1$.
Moreover, it is enough to show this for $a=\xi_{i,j}$ a generator of $\A(\WP_q(\mv{\ell}))$ as given in \eqref{eq:conj}, with $0\leq i\leq j\leq n$.
Remember that $\xi_{i,i} =z_i^*z_i$ 
(these can be replaced by the generators $x_i=z_iz_i^*$), and that $\xi_{i,j}=\zeta_i^*\zeta_j$ for all $i\neq j$.

The explicit expressions in Definition~\ref{def:15}, yields that both $\pi^{(n)}_{k-1}(\xi_{i,j})$ and $\pi^{(n)}_k(\xi_{i,j})$ vanish if $j>k$. 
For $j=k$, $\pi^{(n)}_{k-1}(\xi_{i,k})$ vanishes and $\pi^{(n)}_k(\xi_{i,k})$ has matrix coefficients bounded by 
\begin{equation}\label{eq:bound}
q^{\sum_{j=1}^kp_{j-1}(m_j-m_{j-1})} \;.
\end{equation}
For $j=k-1$, one uses the inequality $|1-\sqrt{1-x^2}|\leq x$ (which is valid for $0\leq x\leq 1$) to prove that $\pi^{(n)}_{k-1}(\xi_{i,j})-\pi^{(n)}_k(\xi_{i,j})$ still has matrix coefficients bounded by \eqref{eq:bound}. For $0\leq i\leq j\leq k-2$, the operators $\pi^{(n)}_{k-1}(\xi_{i,j})$ and $\pi^{(n)}_k(\xi_{i,j})$ coincide on $\mc{V}^n_{k-1}\cap\mc{V}^n_k$.

Since $m_j-m_{j-1}\geq 0$ for $1\leq j\leq k$ and $p_{j-1}\geq 1$, $q^{p_{j-1}(m_j-m_{j-1})}\leq q^{m_j-m_{j-1}}$ and the coefficient in \eqref{eq:bound}
is bounded by $q^{m_k}$. The observation that the series
$$
\sum_{\vec{m}\;\textrm{satisfying (\ref{eq:capconstr})}}q^{m_k}
=\sum_{m_k=n-k}^\infty\binom{m_k+k-1}{k-1}\binom{m_k}{n-k}q^{m_k}
$$
is absolutely convergent for $0<q<1$ concludes the proof.
\end{proof}

As a consequence of Proposition~\ref{prop}, using the direct sum of the representation $\pi_+$ and $\pi_-$ we can construct a 
Fredholm module in a standard manner.

Let us introduce the label $\vv{r}$ for book-keeping.
Let $\pi_{n,\vv{r}}^\pm$ be the representations in \eqref{irreps+-} and
$\HH_{n,\vv{r}}^\pm$ two copies of the underlying Hilbert space previously denoted $\HH_n$.
Let
$$
\pi_{n,\vv{r}}=\pi_{n,\vv{r}}^+\oplus\pi_{n,\vv{r}}^- \;,\qquad
\HH_{n,\vv{r}}=\HH_{n,\vv{r}}^+\oplus\HH_{n,\vv{r}}^- \;,
$$
let $\gamma_{n,\vv{r}}$ be the obvious grading on $\HH_{n,\vv{r}}$ and $F_{n,\vv{r}}$
the flip operator: $F_{n,\vv{r}}(v\oplus w)=w\oplus v$. Then, for $\mv{\ell}=\mv{p}^{\,\sharp}$ with $\mv{p}$ a pairwise coprime weight vector, the datum
\begin{equation}\label{eq:fmn}
\big(\,
\A(\WP_q(\mv{\ell}))
\,,\,
\HH_{n,\vv{r}}
\,,\,
\pi_{n,\vv{r}}
\,,\,
F_{n,\vv{r}}
\,,\,
\gamma_{n,\vv{r}}
\,\big)
\end{equation}
is a $1$-summable even Fredholm module.
Due to \eqref{eq:ri}, the number of such Fredholm modules is the number of possible values of the label $\vv{r}$, that is $p_0p_1\ldots p_{n-1}$.

Additional Fredholm modules are obtained by pullback, applying the same construction to $\A(\WP_q(\ell_0,\ldots,\ell_k))$,
for all $k=1,\ldots,n-1$. Note that $\A(\WP_q(\ell_0,\ldots,\ell_k))$ coincides with the algebra having coprime weight vector
$(p_0,\ldots,p_k)^{\,\sharp}$ (since $\gcd(\ell_0,\ldots,\ell_k)=p_{k+1}\ldots p_n$).

A final Fredholm module is the pullback of the canonical non-trivial Fredholm
module of $\C$, given on $\C\oplus\C$ by the representation $c\mapsto c\oplus 0$ and by
the usual $\gamma$ and $F$ operators.

The number of Fredholm modules we get in this way for the algebra $\A(\WP_q(\mv{\ell}))$ is then 
\begin{equation}\label{eq:number}
1+\sum_{k=1}^np_0p_1\ldots p_{k-1} \;.
\end{equation}
For $k\geq 1$ and $\vv{r}=(r_0,\ldots,r_{k-1})$, with $0\leq r_i<p_i$ for $0\leq i<k$,
we will denote the class of the Fredholm module 
$(\HH_{k,\vv{r}},\pi_{k,\vv{r}},F_{k,\vv{r}},\gamma_{k,\vv{r}})$, pulled-back to
$\A(\WP_q(\mv{\ell}))$, by $\mc{F}_{k,\vv{r}}$. The class of last Fredholm module is
denoted by $\mc{F}_{0,\vv{r}}$, with the convention that $\vv{r}=\emptyset$ in this case.

\section{Spectral triples}\label{sec:otto}
Let 
$
\big(\,
\A(\WP_q(\mv{\ell}))
\,,\,
\HH_{n,\vv{r}}
\,,\,
\pi_{n,\vv{r}}
\,,\,
F_{n,\vv{r}}
\,,\,
\gamma_{n,\vv{r}}
\,\big)
$
be the (irreducible) Fredholm module in \eqref{eq:fmn}. Recall that 
$\HH_{n,\vv{r}}=\HH_{n,\vv{r}}^+\oplus\HH_{n,\vv{r}}^-\simeq\ell^2(\N^n)\otimes\C^2$ and
that $\pi_{n,\vv{r}}=\pi_{n,\vv{r}}^+\oplus\pi_{n,\vv{r}}^-$,
where each summand is the sum of several orthogonal representations
$\pi^{(n)}_k$ (cf.~equation \eqref{irreps+-} and Definition~\ref{def:15}).
Let us denote $\|\vec{m}\|_1:=m_1+\ldots+m_n$ and let 
$$
D_{n,\vv{r}}:=|D_{n,\vv{r}}|F_{n,\vv{r}}
$$
with $|D_{n,\vv{r}}|$ the selfadjoint operator on $\ell^2(\N^n)$ defined by:  
\begin{equation}\label{eq:lambdaD}
|D_{n,\vv{r}}|\ket{\vec{m}}=\|\vec{m}\|_1\ket{\vec{m}}\;, \quad \forall\;\vec{m}\in\N^n\;.
\end{equation}
Given $\vec{k}\in\N^n$ and a bounded function $c:\N^n\to\C$, we call the bounded
operator
\begin{equation}\label{eq:WS}
S(\vec{k},c):\ket{\vec{m}}\mapsto c(\vec{m})\ket{\vec{m}+\vec{k}}
\end{equation}
and its adjoint \emph{weighted shifts}. Weighted shifts are eigenvectors of the derivation
$[|D_{n,\vv{r}}|,\,.\,]$, that is to say $[|D_{n,\vv{r}}|,S(\vec{k},c)]=\|\vec{k}\|_1S(\vec{k},c)$, and similarly for the adjoint.

\begin{prop}\label{prop:st}
The datum 
$
\big(\,
\A(\WP_q(\mv{\ell}))
\,,\,
\HH_{n,\vv{r}}
\,,\,
\pi_{n,\vv{r}}
\,,\,
D_{n,\vv{r}}
\,,\,
\gamma_{n,\vv{r}}
\,\big)
$
is an even spectral triple of metric dimension $n$.
\end{prop}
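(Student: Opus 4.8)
The plan is to verify the three defining conditions of an even spectral triple, together with the claim about metric dimension $n$. Recall that $\big(\A(\WP_q(\mv{\ell})),\HH_{n,\vv{r}},\pi_{n,\vv{r}},D_{n,\vv{r}},\gamma_{n,\vv{r}}\big)$ must satisfy: $\pi_{n,\vv{r}}$ is a representation commuting with $\gamma_{n,\vv{r}}$ (even structure); $D_{n,\vv{r}}$ is self-adjoint, has compact resolvent, and anticommutes with $\gamma_{n,\vv{r}}$; and the commutators $[D_{n,\vv{r}},\pi_{n,\vv{r}}(a)]$ are bounded for all $a$ in the algebra. The gradedness and the anticommutation $D_{n,\vv{r}}\gamma_{n,\vv{r}}=-\gamma_{n,\vv{r}}D_{n,\vv{r}}$ are immediate from the block structure: $\gamma_{n,\vv{r}}$ is the grading for the decomposition $\HH_{n,\vv{r}}=\HH_{n,\vv{r}}^+\oplus\HH_{n,\vv{r}}^-$, $F_{n,\vv{r}}$ is the off-diagonal flip, $|D_{n,\vv{r}}|$ is diagonal hence commutes with $\gamma_{n,\vv{r}}$, so $D_{n,\vv{r}}=|D_{n,\vv{r}}|F_{n,\vv{r}}$ is off-diagonal and therefore anticommutes with $\gamma_{n,\vv{r}}$. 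Self-adjointness of $D_{n,\vv{r}}$ follows since $|D_{n,\vv{r}}|$ is self-adjoint (diagonal with real eigenvalues $\|\vec{m}\|_1$) and $F_{n,\vv{r}}$ is a self-adjoint unitary commuting with $|D_{n,\vv{r}}|$.

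Next I would address the compact resolvent condition and simultaneously pin down the metric dimension. The eigenvalues of $|D_{n,\vv{r}}|$ are the integers $\|\vec{m}\|_1=m_1+\ldots+m_n$, so the eigenvalue $N$ has multiplicity equal to the number of $\vec{m}\in\N^n$ with $m_1+\ldots+m_n=N$, namely $\binom{N+n-1}{n-1}$, which grows polynomially of degree $n-1$ in $N$. Since the eigenvalues tend to infinity, $D_{n,\vv{r}}$ has compact resolvent. For the metric dimension, one counts singular values: $|D_{n,\vv{r}}|^{-1}$ (on the orthogonal complement of its kernel) has singular value $N^{-1}$ with multiplicity $\binom{N+n-1}{n-1}\sim N^{n-1}/(n-1)!$, so the number of singular values $\geq\varepsilon$ grows like $\varepsilon^{-n}$, giving summability exponent exactly $n$. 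Thus $|D_{n,\vv{r}}|^{-s}$ is trace class precisely for $s>n$, which is the statement that the metric (spectral) dimension is $n$.

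The main obstacle, and the only genuinely computational step, is boundedness of the commutators $[D_{n,\vv{r}},\pi_{n,\vv{r}}(a)]$ for $a\in\A(\WP_q(\mv{\ell}))$. By linearity and the Leibniz rule it suffices to check this on the generators $\xi_{i,j}$ of \eqref{eq:conj}, and since $D_{n,\vv{r}}=|D_{n,\vv{r}}|F_{n,\vv{r}}$ with $F_{n,\vv{r}}$ bounded, it is equivalent to bound the derivation $[|D_{n,\vv{r}}|,\pi_{n,\vv{r}}(\xi_{i,j})]$. The key structural input is that each $\pi^{(n)}_k(\xi_{i,j})$ is a finite linear combination of weighted shifts $S(\vec{k},c)$ as in \eqref{eq:WS}: the diagonal generators $x_i$ fix the basis vectors ($\vec{k}=0$), while $\zeta_i$ shifts $\ket{\vec m}\mapsto\ket{\vec m+\vec e_{ik}}$ by a fixed increment. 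Because $[|D_{n,\vv{r}}|,S(\vec{k},c)]=\|\vec{k}\|_1\,S(\vec{k},c)$, the commutator is again a weighted shift with the same (bounded) coefficient function multiplied by the constant $\|\vec{k}\|_1$, hence bounded; the bounded coefficient functions are exactly the matrix coefficients appearing in Definition~\ref{def:15}, whose boundedness for $0<q<1$ follows from the estimates already used in the proof of Proposition~\ref{prop} (the $q$-shifted factorials are bounded by $1$). The only care needed is that $\pi_{n,\vv{r}}$ is assembled from the several orthogonal pieces $\pi^{(n)}_k$ acting on the subspaces $\mc{V}^n_k$, but since these shifts respect the grading and their coefficients are uniformly bounded in $\vec m$, the direct sum of bounded weighted shifts remains a bounded operator, completing the verification.
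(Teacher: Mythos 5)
There is a genuine gap at the heart of your commutator argument. You assert that, because $F_{n,\vv{r}}$ is bounded, bounding $[D_{n,\vv{r}},\pi_{n,\vv{r}}(\xi_{i,j})]$ is ``equivalent'' to bounding $[|D_{n,\vv{r}}|,\pi_{n,\vv{r}}(\xi_{i,j})]$. This would be true only if $F_{n,\vv{r}}$ commuted with $\pi_{n,\vv{r}}(\xi_{i,j})$, which it does not: $F_{n,\vv{r}}$ is the flip of $\HH_{n,\vv{r}}^+\oplus\HH_{n,\vv{r}}^-$ while $\pi_{n,\vv{r}}=\pi_{n,\vv{r}}^+\oplus\pi_{n,\vv{r}}^-$ acts by two \emph{different} representations on the two summands. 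The correct identity is
$$
[D_{n,\vv{r}},\pi_{n,\vv{r}}(a)]=[|D_{n,\vv{r}}|,\pi_{n,\vv{r}}(a)]F_{n,\vv{r}}+|D_{n,\vv{r}}|\,[F_{n,\vv{r}},\pi_{n,\vv{r}}(a)]\;,
$$
and your weighted-shift argument controls only the first term. The second term is an \emph{unbounded} operator multiplied by the nonzero off-diagonal operator built from $\pi_{n,\vv{r}}^+(\xi_{i,j})-\pi_{n,\vv{r}}^-(\xi_{i,j})$ (cf.\ \eqref{eq:Fcomm}), and its boundedness is precisely the nontrivial content of the proposition. It holds because, by the estimates in the proof of Proposition~\ref{prop}, that difference has matrix coefficients bounded by $q^{m_k}$ on $\mc{V}^n_{k-1}\cap\mc{V}^n_k$, while $\|\vec{m}\|_1\leq n m_k$ for $\vec{m}$ satisfying \eqref{eq:capconstr}, so the product is controlled by the bounded sequence $n\,m_k q^{m_k}$. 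Without this step the ``Fredholm'' decay of $\pi^+-\pi^-$ never enters, and indeed your argument as written would prove boundedness of $[D,\pi(a)]$ for \emph{any} pair of representations assembled this way, which is false.

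The remaining parts of your proposal (grading, anticommutation with $\gamma_{n,\vv{r}}$, self-adjointness, compact resolvent, and the multiplicity count $\binom{\lambda+n-1}{n-1}$ giving metric dimension $n$) are correct and agree with the paper; only the treatment of the commutator needs to be repaired along the lines above.
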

\begin{proof}
We have to show that $[D_{n,\vv{r}},\pi_{n,\vv{r}}(a)]$ is bounded for any generator 
$a$ of $\A(\WP_q(\mv{\ell}))$ (and thus for every element of the algebra, due to the Leibniz rule),
and that $|D_{n,\vv{r}}|^{-k}$ is traceclass (outside $\ker|D_{n,\vv{r}}|$) for every $k>n$. 
Observe that:
$$
[D_{n,\vv{r}},\pi_{n,\vv{r}}(a)]=[|D_{n,\vv{r}}|,\pi_{n,\vv{r}}(a)]F_{n,\vv{r}}+|D_{n,\vv{r}}|[F_{n,\vv{r}},\pi_{n,\vv{r}}(a)] \;.
$$
Let $\xi_{i,j}$ be a generator as in \eqref{eq:conj}. From the proof of Proposition~\ref{prop} we know that
\begin{equation}\label{eq:Fcomm}
[F_{n,\vv{r}},\pi_{n,\vv{r}}(\xi_{i,j})]=
\text{\footnotesize$\bigg(\!\!\begin{array}{cr}0 & \!-1 \\ 1 & 0 \end{array}\!\!\bigg)$}
\Big\{\pi_{n,\vv{r}}^+(\xi_{i,j})-\pi^-_{n,\vv{r}}(\xi_{i,j})\Big\} \;,
\end{equation}
and $\pi_{n,\vv{r}}^+(\xi_{i,j})-\pi^-_{n,\vv{r}}(\xi_{i,j})$ is either zero or a weighted shift with matrix coefficients
bounded by $q^{m_k}$ on $\mc{V}^n_{k-1}\cap\mc{V}^n_k$, for each $1\leq k\leq n$.
Furthermore, for $\vec{m}$ satisfying \eqref{eq:capconstr}, one has $\|\vec{m}\|_1\leq nm_k$.
Since the sequence $\{n\, m_kq^{m_k}\}_{m_k\geq 0}$ is bounded, $|D_{n,\vv{r}}|[F_{n,\vv{r}},\pi_{n,\vv{r}}(a)]$
is bounded on each $\mc{V}^n_{k-1}\cap\mc{V}^n_k$, and then on the whole Hilbert space.

Since $\pi_{n,\vv{r}}^\pm(\xi_{i,j})$ is either zero or a weighted shift on each $\mc{V}^n_{k-1}\cap\mc{V}^n_k$,
there $[|D_{n,\vv{r}}|,\pi^\pm_{n,\vv{r}}(\xi_{i,j})]$ is proportional to $\pi^\pm_{n,\vv{r}}(\xi_{i,j})$, hence
bounded. This establishes the commutator condition.

Next, the multiplicity $\mu_\lambda$ of the eigenvalue $\lambda\in\N$ of $|D_{n,\vv{r}}|$ is given by the number of vectors 
$\vec{m}$ satisfying $\|\vec{m}\|_1= \lambda$.
With the notation $k_i:=m_1+m_2+\ldots+m_i+i$, this $\mu_\lambda$ is the number of $\vec{k}\in\N^n$ satisfying $1\leq k_1<k_2<\ldots<k_n=\lambda+n$,
that is the number of $n-1$ partitions of $\lambda+n-1$. So $\mu_\lambda=\binom{\lambda+n-1}{n-1}$.
Since the latter is a polynomial of order $n-1$ in $\lambda$, $\sum_{\lambda\geq 1}\mu_\lambda\lambda^{-k}<\infty$
for all $n-1-k<-1$, that means $k>n$ as expected.
\end{proof}

The metric dimension of the spectral triple in Proposition~\ref{prop:st} coincides with the classical \emph{complex} dimension: 
$n=\dim_{\C}\WP(\mv{\ell})$. One gets additional spectral triples of any dimension $k<n$ by pulling back spectral triples from spaces $\WP_q(\ell_0,\ldots,\ell_k)$.

One possible generalization of the previous proposition goes as follows. 
Recall first that the Lipschitz norm of a function $g:\R\to\R$ is defined as 
$$
\|g\|_{\mathrm{Lip}}:=\sup_{t\neq s}\left|\frac{g(t)-g(s)}{t-s}\right| \;.
$$
A function is Lipschitz continuous if $\|g\|_{\mathrm{Lip}}<\infty$. Lipschitz continuous functions are
a.e.~
differentiable, and their Lipschitz norm coincides with $\|g'\|_{\infty}$ (the sup norm
of the derivative).

With $\lambda: \R_{\geq 0} \to \R_{\geq 0}$ an increasing function,
replace the operator \eqref{eq:lambdaD} by the more general
$$
|D^\lambda_{n,\vv{r}}|\ket{\vec{m}}=\lambda(\|\vec{m}\|_1)\ket{\vec{m}}\;, \quad \forall\;\vec{m}\in\N^n\;,
$$
and define $D_{n,\vv{r}}^\lambda:=|D_{n,\vv{r}}^\lambda|F_{n,\vv{r}}$. Then:

\begin{prop}\label{prop:stB}
If $\lambda$ is Lipschitz continuous, the datum 
$
\big(\,
\A(\WP_q(\mv{\ell}))
\,,\,
\HH_{n,\vv{r}}
\,,\,
\pi_{n,\vv{r}}
\,,\,
D^\lambda_{n,\vv{r}}
\,,\,
\gamma_{n,\vv{r}}
\,\big)
$
is an even spectral triple.
\end{prop}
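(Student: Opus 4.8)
The plan is to follow the proof of Proposition~\ref{prop:st} step by step, isolating the single place where the special form $\lambda(t)=t$ was used and checking that it survives under the weaker Lipschitz hypothesis. Three things must be verified for a spectral triple: that $D^\lambda_{n,\vv{r}}$ is an odd self-adjoint operator with compact resolvent, and that $[D^\lambda_{n,\vv{r}},\pi_{n,\vv{r}}(a)]$ is bounded for every generator $a$ of $\A(\WP_q(\mv{\ell}))$. The first two are formal and go exactly as in Proposition~\ref{prop:st}; the genuine analytic content, and the only point where Lipschitz continuity enters, is the boundedness of the commutators.

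For the operator itself, $|D^\lambda_{n,\vv{r}}|$ is by construction a non-negative diagonal operator in the basis $\ket{\vec{m}}$ acting identically on the two copies $\HH_{n,\vv{r}}^\pm$, so it commutes with the flip $F_{n,\vv{r}}$ and with the grading $\gamma_{n,\vv{r}}$; since $F_{n,\vv{r}}$ is a self-adjoint unitary anticommuting with $\gamma_{n,\vv{r}}$, the product $D^\lambda_{n,\vv{r}}=|D^\lambda_{n,\vv{r}}|F_{n,\vv{r}}$ is self-adjoint and odd. Its spectrum is $\{\pm\lambda(j):j\in\N\}$, the value $\lambda(j)$ occurring with the finite multiplicity $\binom{j+n-1}{n-1}$ computed in Proposition~\ref{prop:st}; as $\lambda$ is increasing and unbounded these eigenvalues tend to $+\infty$, and $D^\lambda_{n,\vv{r}}$ has compact resolvent.

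For the commutators I would write, as in \eqref{eq:Fcomm},
$$
[D^\lambda_{n,\vv{r}},\pi_{n,\vv{r}}(a)]=[|D^\lambda_{n,\vv{r}}|,\pi_{n,\vv{r}}(a)]F_{n,\vv{r}}+|D^\lambda_{n,\vv{r}}|[F_{n,\vv{r}},\pi_{n,\vv{r}}(a)] \;,
$$
and by the Leibniz rule it suffices to treat $a=\xi_{i,j}$. On each intersection $\mc{V}^n_{k-1}\cap\mc{V}^n_k$ the operator $\pi_{n,\vv{r}}(\xi_{i,j})$ is a weighted shift $S(\vec{k},c)$ as in \eqref{eq:WS}, and
$$
[|D^\lambda_{n,\vv{r}}|,S(\vec{k},c)]\ket{\vec{m}}=\big(\lambda(\|\vec{m}\|_1+\textstyle\sum_ik_i)-\lambda(\|\vec{m}\|_1)\big)\,c(\vec{m})\ket{\vec{m}+\vec{k}} \;.
$$
The Lipschitz bound gives $|\lambda(\|\vec{m}\|_1+\sum_ik_i)-\lambda(\|\vec{m}\|_1)|\leq\|\lambda\|_{\mathrm{Lip}}\,|\sum_ik_i|$, a constant independent of $\vec{m}$, while $|c(\vec{m})|\leq\|\pi_{n,\vv{r}}(\xi_{i,j})\|$; hence the first term is a bounded weighted shift. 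For the second term, the proof of Proposition~\ref{prop} shows that $[F_{n,\vv{r}},\pi_{n,\vv{r}}(\xi_{i,j})]$ is, up to the flip matrix, $\pi_{n,\vv{r}}^+(\xi_{i,j})-\pi_{n,\vv{r}}^-(\xi_{i,j})$, a weighted shift whose coefficients are bounded by $q^{m_k}$ on $\mc{V}^n_{k-1}\cap\mc{V}^n_k$; multiplying by $|D^\lambda_{n,\vv{r}}|$ scales these by $\lambda(\|\vec{m}+\vec{k}\|_1)$, and using $\lambda(t)\leq\lambda(0)+\|\lambda\|_{\mathrm{Lip}}\,t$ together with $\|\vec{m}\|_1\leq n\,m_k$ on $\mc{V}^n_{k-1}\cap\mc{V}^n_k$ (from \eqref{eq:capconstr}), the resulting coefficients are $O(m_k q^{m_k})$ and therefore bounded.

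The main obstacle is the first term. In Proposition~\ref{prop:st} the choice $\lambda(t)=t$ made $[|D_{n,\vv{r}}|,S(\vec{k},c)]$ exactly equal to $(\sum_ik_i)\,S(\vec{k},c)$, i.e.\ a scalar multiple of a bounded operator, because weighted shifts are eigenvectors of the derivation $[|D_{n,\vv{r}}|,\,\cdot\,]$. For a general $\lambda$ this exact eigenvector relation is lost, and one must instead control the finite difference $\lambda(\|\vec{m}\|_1+\sum_ik_i)-\lambda(\|\vec{m}\|_1)$ \emph{uniformly} in $\vec{m}$. This is precisely what the Lipschitz hypothesis supplies, and what mere continuity or monotonicity of $\lambda$ would not; the same hypothesis simultaneously forces at most linear growth of $\lambda$, so that the exponential decay $q^{m_k}$ in the second term is not overwhelmed.
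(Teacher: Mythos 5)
Your argument is correct and follows essentially the same route as the paper: there too one reduces, via the same decomposition of the commutator, to the boundedness of $[|D^\lambda_{n,\vv{r}}|,\pi^\pm_{n,\vv{r}}(\xi_{i,j})]$ and of $|D^\lambda_{n,\vv{r}}|\{\pi^+_{n,\vv{r}}(\xi_{i,j})-\pi^-_{n,\vv{r}}(\xi_{i,j})\}$ on each $\mc{V}^n_{k-1}\cap\mc{V}^n_k$, using the Lipschitz bound on the finite difference $\lambda(\|\vec{m}\|_1+\|\vec{k}\|_1)-\lambda(\|\vec{m}\|_1)$ for the first and the at-most-linear growth of $\lambda$ against the decay $q^{m_k}$ for the second. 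The one caveat is that your compact-resolvent step quietly assumes $\lambda$ is unbounded, which is not among the stated hypotheses (an increasing bounded Lipschitz $\lambda$ would give a bounded $D^\lambda_{n,\vv{r}}$ with non-compact resolvent); the paper simply does not address the resolvent in this proof, so this extra assumption is genuinely needed and worth stating explicitly.
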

\begin{proof}
The proof of Proposition~\ref{prop:st} can be repeated with minor changes. Following that proof, 
one has to show that the operators
$$ 
\textup{i)} \quad [|D^\lambda_{n,\vv{r}}|,\pi^\pm_{n,\vv{r}}(\xi_{i,j})] \qquad \textup{and} \qquad
\textup{ii)} \quad |D^\lambda_{n,\vv{r}}|\big\{\pi_{n,\vv{r}}^+(\xi_{i,j})-\pi^-_{n,\vv{r}}(\xi_{i,j})\big\}
$$
are bounded on $\mc{V}^n_{k-1}\cap\mc{V}^n_k$, for each $1\leq k\leq n$ and for each $i,j$
(cf.~\eqref{eq:Fcomm}).

Concerning i): for a weighted shift like \eqref{eq:WS}, it holds that 
$$
[|D^\lambda_{n,\vv{r}}|,S(\vec{h},c)]\ket{\vec{m}}=c(\vec{m})\big\{\lambda(\|\vec{m}\|_1
+\|\vec{h}\|)-\lambda(\|\vec{m}\|_1)\big\}\ket{\vec{m}+\vec{h}} \, .
$$
Due to the Lipshitz condition, the matrix coefficients are bounded by $\|\lambda\|_{\mathrm{Lip}}\|h\|_1$
times the operator norm of $S(\vec{h},c)$. Hence the commutator is bounded. Since each $\pi^\pm_{n,\vv{r}}(\xi_{i,j})$
is a weighted shift restricted to $\mc{V}^n_{k-1}\cap\mc{V}^n_k$, this proves that the commutators i) are bounded.

Concerning ii): from Lipschitz continuity we deduce $|\lambda(t)|\leq t$, which means that
\begin{equation}\label{eq:boundexp}
\widetilde{\lambda}(t) = q^{t/n}\lambda(t)
\end{equation}
is a bounded function. 
Now, the operator ii) is a weighted shift with matrix coefficients bounded by $\lambda(\|\vec{m}\|_1)q^{m_k}$.
Since $\lambda$ is increasing, and $\|\vec{m}\|_1\leq nm_k$ for every $\vec{m}$ satisfying \eqref{eq:capconstr},
the matrix coefficients of ii) are bounded by $q^{m_k} \lambda(nm_k)$. Calling $t:=nm_k$,
as said the function $\widetilde{\lambda}(t)=q^{t/n} \lambda(t)$ is bounded, thus the operator ii) is bounded.
\end{proof}

From previous proposition one can get spectral triples of arbitrary metric dimension $d\geq n$.

\begin{prop}
Let $d\geq n$ be a real number. With the choice $\lambda(t):=t^{n/d}$
the spectral triple in Proposition~\ref{prop:stB} has metric dimension $d$.
\end{prop}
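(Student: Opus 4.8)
The plan is to compute the metric (spectral) dimension of the spectral triple attached to $|D^\lambda_{n,\vv{r}}|$ with $\lambda(t)=t^{n/d}$, i.e.\ to show that the infimum of those $s>0$ for which $|D^\lambda_{n,\vv{r}}|^{-s}$ is trace class (away from the kernel) equals $d$. The eigenvalue data are already known from the proof of Proposition~\ref{prop:st}: the operator $|D_{n,\vv{r}}|$ has eigenvalues $\lambda\in\N$ with multiplicity $\mu_\lambda=\binom{\lambda+n-1}{n-1}$. Applying the increasing function $t\mapsto t^{n/d}$ simply relabels the eigenvalue $\lambda$ as $\lambda^{n/d}$ while leaving the multiplicities untouched, since $|D^\lambda_{n,\vv{r}}|$ acts on $\ket{\vec m}$ by $\|\vec m\|_1^{n/d}$ and the multiplicity of a given value of $\|\vec m\|_1$ is unchanged.

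Concretely, I would write the relevant zeta-type sum. For $s>0$ one has, on the orthogonal complement of the kernel,
$$
\tr\big(|D^\lambda_{n,\vv{r}}|^{-s}\big)=\sum_{\lambda\geq 1}\mu_\lambda\,\big(\lambda^{n/d}\big)^{-s}
=\sum_{\lambda\geq 1}\binom{\lambda+n-1}{n-1}\lambda^{-sn/d}\,.
$$
The key step is then the convergence analysis of this single series. Since $\binom{\lambda+n-1}{n-1}$ is a polynomial in $\lambda$ of degree $n-1$, the summand behaves like $\lambda^{\,n-1-sn/d}$ for large $\lambda$, so the series converges if and only if $n-1-sn/d<-1$, i.e.\ if and only if $sn/d>n$, that is $s>d$. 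Therefore the trace is finite exactly for $s>d$ and divergent for $s\leq d$, which by definition means the metric dimension is $d$.

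It remains only to check the two structural hypotheses so that the conclusion of Proposition~\ref{prop:stB} applies and this is genuinely a spectral triple. The function $\lambda(t)=t^{n/d}$ is increasing on $\R_{\geq 0}$, and since $d\geq n$ the exponent $n/d$ lies in $(0,1]$, so $\lambda$ is Lipschitz continuous on $\R_{\geq 0}$ (its derivative $(n/d)t^{n/d-1}$ is bounded near infinity, and for the exponent-one case it is globally Lipschitz). Hence Proposition~\ref{prop:stB} guarantees boundedness of all the commutators $[D^\lambda_{n,\vv{r}},\pi_{n,\vv{r}}(a)]$, and the dimension computation above supplies the summability, completing the proof.

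The main obstacle is essentially bookkeeping rather than conceptual: one must be careful that applying the monotone reparametrization $t\mapsto t^{n/d}$ does not alter multiplicities but only rescales eigenvalues, and that the borderline exponent is handled correctly so that the infimum defining the metric dimension is pinned exactly at $d$ (strict divergence at $s=d$, convergence for all $s>d$). The Lipschitz check at the endpoint $n/d=1$ versus the interior $0<n/d<1$ deserves a line, but presents no real difficulty.
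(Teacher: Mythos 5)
Your proof is correct and follows essentially the same route as the paper: both read off the multiplicity $\binom{\lambda+n-1}{n-1}$ from the proof of Proposition~\ref{prop:st}, write the zeta function $\sum_j \binom{j+n-1}{n-1}\,j^{-sn/d}$, and locate its abscissa of convergence at $s=d$. One pedantic caveat (shared with the paper): for $d>n$ the function $t^{n/d}$ is not Lipschitz on all of $\R_{\geq 0}$, since the difference quotient blows up at $t=0$, not at infinity; this is harmless because $|D^{\lambda}_{n,\vv{r}}|$ only evaluates $\lambda$ at arguments in $\N$, where $|t^{n/d}-s^{n/d}|\leq |t-s|^{n/d}\leq |t-s|$.
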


\begin{proof}
Eigenvalues of $|D^\lambda_{n,\vv{r}}|$ are given by $\lambda(j)$, with $j\in\N$. The multiplicity of
$\lambda(j)$ is $\binom{j+n-1}{n-1}$, which is a polynomial of order $n-1$ in $j$ (as in the proof of
Proposition~\ref{prop:st}). Let us write the leading term in the zeta-function of the Dirac operator:
$$
\tr(|D^\lambda_{n,\vv{r}}|^{-s})=\sum_{j\geq 1}j^{n-1}\lambda(j)^{-s}
+\text{lower order terms},
$$
where the trace is on the orthogonal complement of the kernel of $D^\lambda_{n,\vv{r}}$.
For $\lambda(t)=t^{n/d}$, this is convergent for $\Re(s)\geq d$ and has a pole at $s=d$,
proving that the metric dimension is $d$.
\end{proof}

\begin{rem}
For $d<n$,  
with $\lambda(t):=t^{n/d}$ the function $\widetilde{\lambda}(t) = q^{t/n}\lambda(t)$ is still bounded.
What fails is Lipschits continuity: the derivative $\lambda'(t) = \frac{n}{d}\,t^{n/d-1}$ is unbounded if $n/d-1>0$.
The example of $\CP^1_q$, that is the standard Podle{\'s} \cite{DDLW07}, would suggest that while boundedness of the function 
$\widetilde{\lambda}(t)$ in \eqref{eq:boundexp} is necessary in order to have a spectral triple, the Lipschitz condition is sufficient but not necessary.
Were this to be true, the above construction would yield spectral triples of any metric dimension,
even $0^+$ with $\lambda(t):=q^{-\epsilon t}$ (for any $0<\epsilon<1/n$). 
\end{rem}

\begin{rem}
The spectral triples above have no classical analogue (the representation become trivial for $q=1$).
For the quantum projective space $\CP^n_q$ there are additional equivariant, \mbox{$0^+$-summable},
spectral triples, which for $q=1$ give the Dolbeault-Dirac
operator of $\CP^n$ twisted with a line bundle \cite{DD09} (see also \cite{DDL08}).
For quantum weighted projective spaces it is not clear how to get a $q$-analogue of the Dolbeault-Dirac
operator (a crucial ingredient in the construction --- the action of $\mc{U}_q(\mathfrak{su}(n+1))$ --- is missing in these cases).
\end{rem}

\section{Principal bundle structures}\label{sec:sei}

It is well-known that the inclusion $\A(\CP^1_q) \hookrightarrow \A(S^3_q)$ is a quantum principal bundle \cite{BM96}.
On~the other hand, if $\ell_0\neq 1$, $\A(\WP_q(\ell_0,\ell_1)) \hookrightarrow \A(S^3_q)$
is not a quantum principal bundle (nor is a more general principal comodule algebra), since surjectivity of the 
canonical map fails \cite{BF12}. For $p=\ell_0\ell_1$, the inclusion 
$\A(\WP_q(\ell_0,\ell_1)) \hookrightarrow \A(L_q(p;\ell_0,\ell_1))$ is a quantum principal bundle:
this was proved in \cite{BF12} for $\ell_0=1$ and in \cite{AKL14} for general weights $\ell_0,\ell_1$.
In \cite{ABL14} there is the case of quantum lens spaces in any dimension $n$  
but with weights all equal to 1 and any integer $p$; so that the `base space' is now a quantum projective space.

In this section, we are going to extend these results to our class of quantum lens and weighed projective spaces, showing that the inclusion
$\A(\WP_q(\mv{\ell})) \hookrightarrow \A(L_q(p,\mv{\ell}))$, for $\mv{\ell}=\mv{p}^{\,\sharp}$ with $\mv{p}$ pairwise coprime, 
is a quantum principal $\U(1)$-bundle.

Not needing the full fledged theory, we content ourself with the following definition \cite{BM96,Ha96}.
Let $H=\A(\U(1))$ be the Hopf $*$-algebra generated by a unitary group-like element $u$. 
Let $A$ be a right comodule algebra over 
$H=\A(\U(1))$, that is there is a coaction,
$$
\delta : A \to A \otimes H \, ,
$$
with $B:=A^{\text{co}H}$ the subalgebra of $A$ consisting of coinvariant elements.
One says that $A$ is \emph{principal} or that $B \hookrightarrow A$ is a \emph{quantum principal $\U(1)$-bundle}, if the canonical
map, 
$$
\mathrm{can} : A \otimes_{B} A \to A \otimes H \, , \quad x \otimes y \mapsto x \, \delta (y) \, ,
$$
is an isomorphism. Indeed, being $H$ cosemisimple with bijective antipode, the surjectivity of the canonical map implies its bijectivity  and also faithfully flatness of the extension $B \hookrightarrow A$. \linebreak
With $H=\A(\U(1))$, the algebra $A$ gets naturally graded, $A=\bigoplus_{k\in\Z}\mc{L}_k$ where
$$
\mc{L}_k:=\big\{ a\in A: \delta(a)=a\otimes u^{-k} \big\} \;,
$$
and the principality of the algebra $A$ becomes then equivalent to $A$ being \emph{strongly} $\Z$-graded \cite[Cor.~I.3.3]{NVO82}, 
that is $\mc{L}_k \mc{L}_{k'} = \mc{L}_{k+k'}$.
An efficient way to establish this is by use of the so-called strong connection,
$A$ being principal (or equivalently strongly $\Z$-graded) if and only if such a strong connection exists \cite{Ha96}.
For the case at hand with $H=\A(\U(1))$, a \emph{strong connection} is a linear map $\omega:H\to A\otimes A$ satisfying the following conditions:
\begin{align}
 &\omega(1) = 1\otimes 1\;, \notag \\
 &\omega(u^k)\in\mc{L}_{-k}\otimes \mc{L}_{k} &\hspace{-2cm} \forall\;k\in\Z \,, \notag \\
 \qquad\qquad
 \sum\nolimits_i&\omega(u^k )_i^{[1]}\omega(u^k)_i^{[2]} =1 &\hspace{-2cm} \forall\;k\in\Z \, . \label{eq:22nontrivial}
\end{align}
Here we used the notation
$$
\omega(h) = \sum\nolimits_i\omega(h)_i^{[1]}\otimes\omega(h)_i^{[2]} \;,\qquad \textup{for} \,\, h \in H \, .
$$
As a consequence, the matrix $E_k$ with entries
\begin{equation}\label{eq:proj}
(E_k)_{ij}:=\omega(u^k)_i^{[2]}\omega(u^k)_j^{[1]}
\end{equation}
is a coinvariant idempotent, that is its entries are in the algebra $B = \mc{L}_0$ of coinvariants.
Thus, the principality of $A$  implies that each $\mc{L}_{k}$ is finitely generated and projective as left and right $\mc{L}_0$-module. In fact, one can easily show the left (respectively right) $\mc{L}_0$-module isomorphisms $B^{N_k}E_k\simeq\mc{L}_{k}$ 
(and $E_kB^{N_k}\simeq\mc{L}_{-k}$), where $N_k$ is the size of $E_k$.

Back to quantum lens and weighted projective spaces. Firstly, dually to the $\U(1)$-action \eqref{eq:act}, one has
a coaction of the Hopf algebra $\A(\U(1))$ on the sphere:
$$
\A(S^{2n+1}_q)\to \A(S^{2n+1}_q)\otimes \A(\U(1)) \;,\quad
z_i\mapsto z_i\otimes u^{\ell_i}\; , \quad \forall\;i=0,\ldots,n.
$$
Next, we take $A=\A(L_q(p,\mv{\ell}))$ as in Theorem~\ref{thm:13}, that is for $\mv{\ell}=\mv{p}^{\,\sharp}$ with $\mv{p}$ pairwise coprime and 
$p=p_0p_1\ldots p_n$. On the generators of $A$ the previous coaction becomes:
\begin{equation}\label{eq:coactA}
\zeta_i \mapsto \zeta_i\otimes u^p \, , \qquad x_i \mapsto x_i\otimes 1 \, , \qquad \forall\;i=0,\ldots,n \, .  
\end{equation}
The subalgebra of coinvariant elements is clearly $B = \A(\WP_q(\mv{\ell}))$.

However, the coaction \eqref{eq:coactA} is not quite the one we are after.
Classically the lens space $L(p,\mv{\ell})$ is a principal bundle over $\WP(\mv{\ell})$  with structure group $\U(1)/\Z_p\simeq\U(1)$.
In algebraic terms, this amounts to taking as structure Hopf algebra $H$ the Hopf $*$-subalgebra of $\A(\U(1))$ generated by $u':=u^p$, which clearly is still isomorphic to $\A(\U(1))$. Renaming $u'$ to $u$, the `correct' coaction $\delta:A\to A\otimes H$ on generators becomes:
\begin{equation}\label{eq:coactB}
\delta(\zeta_i)=\zeta_i\otimes u \, , \qquad \delta(x_i)=x_i\otimes 1 \, , \qquad \forall\;i=0,\ldots,n \, ,  
\end{equation}
for which the subalgebra of coinvariant elements is again $B = \A(\WP_q(\mv{\ell}))$.

We next show that the algebra inclusion $\A(\WP_q(\mv{\ell})) \hookrightarrow \A(L_q(p,\mv{\ell}))$ 
(for the coaction $\delta$ in \eqref{eq:coactB}) is a quantum principal $\U(1)$-bundle. 
We do this by establishing in general, the existence of  a strong connection and by providing recursive relations
that in principle would allow one to write down explicitly the connection case by case.

\begin{prop}\label{prop:aibi}
Consider the commuting generators $x_i$, $i=1, \dots n$, of $\A(\WP_q(\mv{\ell}))$. Then \\
i) There exists $a_0,a_1,\ldots,a_n\in\C[x_1,\ldots,x_n]$ such that
\begin{subequations}
\begin{equation}\label{eq:solu}
a_0\zeta_0\zeta_0^*+a_1\zeta_1\zeta_1^*+\ldots+a_n\zeta_n\zeta_n^*=1 \;.
\end{equation}
ii) There exists $b_0,b_1,\ldots,b_n\in\C[x_1,\ldots,x_n]$ such that
\begin{equation}\label{eq:solu-b}
b_0\zeta_0^*\zeta_0+b_1\zeta_1^*\zeta_1+\ldots+b_n\zeta_n^*\zeta_n=1 \;.
\end{equation}
\end{subequations}
\end{prop}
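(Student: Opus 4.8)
The plan is to recognize that \eqref{eq:solu} and \eqref{eq:solu-b} are statements in the \emph{commutative} polynomial subalgebra generated by the $x_i$, and to prove them by a downward recursion on the index $i$. Since the $x_i$ mutually commute by \eqref{eq:relLensA}, and since $\zeta_i\zeta_i^*$ and $\zeta_i^*\zeta_i$ are, by \eqref{eq:relLensH} and \eqref{eq:relLensL}, honest polynomials in the $x_j$, everything takes place in the commutative ring $R=\C[x_1,\ldots,x_n]$, where $x_0$ is eliminated via $x_0=1-x_1-\ldots-x_n$ coming from \eqref{eq:relLensG}. Both claims then amount to showing that $1$ lies in the ideal $I$ of $R$ generated by the $n+1$ elements $\zeta_i\zeta_i^*$ (respectively $\zeta_i^*\zeta_i$).

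To set up the recursion I would introduce the partial sums $s_i:=x_i+x_{i+1}+\ldots+x_n$, so that $s_0=1$ by \eqref{eq:relLensG}, $s_{n+1}=0$, and $X_i=s_{i+1}$ in the notation of the proof of Proposition~\ref{prop:relLens}. Rewriting each factor in \eqref{eq:relLensH} as $x_i+(1-q^{-2k})X_i=s_i-q^{-2k}s_{i+1}$ gives
\begin{equation*}
\zeta_i\zeta_i^*=\prod_{k=0}^{p_i-1}\big(s_i-q^{-2k}s_{i+1}\big)=s_i^{p_i}+s_{i+1}h_i
\end{equation*}
for some $h_i\in R$, since setting $s_{i+1}=0$ collapses the product to $s_i^{p_i}$ and every remaining monomial is divisible by $s_{i+1}$. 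The same manipulation applied to \eqref{eq:relLensL} yields $\zeta_i^*\zeta_i=s_i^{p_i}+s_{i+1}\tilde h_i$. These are the \emph{recursive relations} I want: modulo $I$ one has $s_i^{p_i}\equiv -s_{i+1}h_i$.

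The recursion then shows that every $s_i$ is nilpotent modulo $I$. At the top, $s_{n+1}=0$ forces $\zeta_n\zeta_n^*=s_n^{p_n}\in I$. Assuming inductively that $s_{i+1}^{N}\in I$ for some $N\geq 1$, the identity $s_i^{p_i}=\zeta_i\zeta_i^*-s_{i+1}h_i$ gives $s_i^{p_i}\equiv -s_{i+1}h_i\pmod I$, whence $s_i^{p_i N}\equiv (-1)^N s_{i+1}^{N}h_i^{N}\equiv 0 \pmod I$; thus $s_i$ is again nilpotent modulo $I$. Descending to $i=0$ and using $s_0=1$ produces $1\in I$, which is exactly \eqref{eq:solu}; the identical argument with $\tilde h_i$ gives \eqref{eq:solu-b}. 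Unwinding the nilpotency exponents provides explicit (if unwieldy) polynomials $a_i,b_i$.

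The main point to watch, rather than a serious obstacle, is the bookkeeping guaranteeing the factorization $\zeta_i\zeta_i^*=s_i^{p_i}+s_{i+1}h_i$: one must check that the $k=0$ factor (which equals $x_i$, since $1-q^{0}=0$) together with all cross terms indeed organize into a single multiple of $s_{i+1}$, and that the whole computation stays inside $\C[x_1,\ldots,x_n]$ after the substitution $x_0=1-s_1$. Equivalently, and perhaps more transparently, one may invoke the weak Nullstellensatz: the recursion above is precisely the statement that any common zero of the $\zeta_i\zeta_i^*$ forces $x_n=\ldots=x_1=0$ and hence $x_0=1$, at which point $\zeta_0\zeta_0^*=1\neq 0$; so the common zero locus is empty and $1\in I$.
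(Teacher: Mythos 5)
Your argument is correct. The key identity you rely on --- writing each factor of \eqref{eq:relLensH} as $s_i-q^{-2k}s_{i+1}$ with $s_i=x_i+\ldots+x_n$, so that $\zeta_i\zeta_i^*=s_i^{p_i}+s_{i+1}h_i$ (and likewise for \eqref{eq:relLensL}) --- is sound, and the downward recursion showing each $s_i$ nilpotent modulo the ideal $I=(\zeta_0\zeta_0^*,\ldots,\zeta_n\zeta_n^*)$, terminating at $s_0=1$, does establish $1\in I$ without any appeal to deeper theorems.

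The paper proves the proposition in three ways: via B\'ezout's identity for $n=1$, via Hilbert's weak Nullstellensatz (showing by downward induction that the common zero locus of $\{\zeta_j\zeta_j^*\}_{j\geq k}$ is $\{x_k=\ldots=x_n=0\}$, hence empty at $k=0$), and via an explicit upward induction that converts the sphere relation $\sum_i z_iz_i^*=1$ into \eqref{eq:solu} one generator at a time using the multinomial formula. Your secondary remark invoking the Nullstellensatz is essentially the paper's second proof. Your primary argument, however, is a genuine fourth variant: it exploits the same structural fact (that $\zeta_i\zeta_i^*$ reduces to $s_i^{p_i}$ modulo $s_{i+1}$) but packages it as a nilpotency-modulo-$I$ recursion rather than as a zero-locus computation or a multinomial expansion. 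What this buys is the best of both worlds: it is elementary and constructive (unwinding the recursion yields explicit $a_i$, $b_i$, as the paper's third proof also does) while avoiding both the Nullstellensatz and the combinatorial bookkeeping of multinomial coefficients. The trade-off is that the nilpotency exponents it produces, roughly $p_0p_1\cdots p_n$, give less directly usable coefficient formulas than the paper's recursive scheme, which the paper needs later (e.g.\ in Proposition~\ref{prop:coeffn1} and the pairing computation of Proposition~\ref{prop:21}); but for the existence statement asked here your route is perfectly adequate and arguably cleaner.
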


\noindent
Note that due to \eqref{eq:relLensH} and \eqref{eq:relLensL}, the products $\zeta_i\zeta_i^*$ and $\zeta_i^*\zeta_i$ all
belong to the commutative subalgebra generated by $x_0,\ldots,x_n$. Using \eqref{eq:relLensG} we can eliminate $x_0$, hence
it makes sense to look for solutions of the above equations that are polynomials in $\C[x_1,\ldots,x_n]$.

For $n=1$, we give a proof using B{\'e}zout's identity as in Lemma~\ref{lemma:Bez} for the principal ideal domain $R:=\C[x_1]$.
For general $n$, we cannot use B{\'e}zout's identity since $\C[x_1,\ldots,x_n]$ is not a principal ideal domain
if $n\geq 2$, but we can use Hilbert's weak Nullstellensatz, which states that the only ideal representing the empty
variety is the entire polynomial ring.

\begin{lemma}[Hilbert's weak Nullstellensatz]\label{lemma:Null}
An ideal $I\subset\C[x_1,\ldots,x_n]$ contains $1$ if and only if the polynomials in $I$ do not have any common zero, 
i.e. $\mathcal{Z}(I)=\emptyset$.
\end{lemma}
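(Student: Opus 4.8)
The plan is to prove the two implications separately, the forward one being immediate and the reverse one carrying all the content. If $1\in I$ then $I=\C[x_1,\ldots,x_n]$ contains the nowhere-vanishing constant polynomial $1$, so no point of $\C^n$ can annihilate every element of $I$; thus $\mathcal{Z}(I)=\emptyset$. For the converse I argue by contraposition: assuming $I$ is a \emph{proper} ideal I will exhibit a point of $\mathcal{Z}(I)$. First I would enlarge $I$ to a maximal ideal $\mathfrak{m}\supseteq I$, which exists by Zorn's lemma since the ring is unital; it then suffices to find a common zero of $\mathfrak{m}$, since such a point lies a fortiori in $\mathcal{Z}(I)$.

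The heart of the matter is the quotient $K:=\C[x_1,\ldots,x_n]/\mathfrak{m}$. It is a field (as $\mathfrak{m}$ is maximal) and it is generated as a $\C$-algebra by the images $\alpha_i$ of the $x_i$; that is, $K$ is a finitely generated $\C$-algebra that happens to be a field. The key input is then Zariski's Lemma: \emph{if a field $K$ is finitely generated as an algebra over a field $k$, then $K$ is a finite, hence algebraic, extension of $k$.} Applying this with $k=\C$ and using that $\C$ is algebraically closed forces $K=\C$. Hence the quotient map $\pi:\C[x_1,\ldots,x_n]\to\C$ sends each $x_i$ to a scalar $a_i:=\pi(x_i)$, and every $f\in\mathfrak{m}$ satisfies $f(a_1,\ldots,a_n)=\pi(f)=0$. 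So $(a_1,\ldots,a_n)\in\mathcal{Z}(\mathfrak{m})\subseteq\mathcal{Z}(I)$, completing the argument.

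It remains to sketch Zariski's Lemma, which I would prove by induction on the number of algebra generators, keeping the base field $k$ arbitrary so that the inductive hypothesis can be reused. For a single generator, if $\alpha_1$ were transcendental over $k$ then $k[\alpha_1]$ would be a polynomial ring in which $\alpha_1$ has no inverse, contradicting that $K=k[\alpha_1]$ is a field; hence $\alpha_1$ is algebraic and $K$ is finite over $k$. For the inductive step with $K=k[\alpha_1,\ldots,\alpha_n]$, I regard $K$ as finitely generated over the subfield $k(\alpha_1)\subseteq K$; by the inductive hypothesis $K$ is algebraic over $k(\alpha_1)$, so each $\alpha_i$ ($i\geq 2$) is a root of a monic polynomial with coefficients in $k(\alpha_1)$. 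Clearing denominators, there is a single nonzero $d\in k[\alpha_1]$ with all $\alpha_i$ integral over the localization $k[\alpha_1]_d$, whence $K$ is (module-finite, so) integral over $k[\alpha_1]_d$.

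The main obstacle, and the only subtle point, is to rule out that $\alpha_1$ is transcendental over $k$. Suppose it were. Then $K$ is a field integral over the domain $k[\alpha_1]_d$, and by the standard fact that a subdomain over which a field is integral must itself be a field, $k[\alpha_1]_d$ would be a field. But $k[\alpha_1]$ has infinitely many monic irreducible polynomials, while $d$ has only finitely many irreducible factors; choosing an irreducible $p\nmid d$, the element $1/p$ cannot lie in $k[\alpha_1]_d$, so this localization is \emph{not} a field---a contradiction. Therefore $\alpha_1$ is algebraic over $k$, so $k(\alpha_1)=k[\alpha_1]$ is finite over $k$; combined with $K$ finite over $k(\alpha_1)$ this makes $K$ finite over $k$, closing the induction and with it the proof.
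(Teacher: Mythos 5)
Your proof is correct, but note that the paper itself offers no proof to compare against: it states this classical result as background (Lemma~\ref{lemma:Null}) and immediately puts it to work in ``Proof 1'' of Proposition~\ref{prop:aibi}, as one would with any textbook fact. What you have supplied is the standard modern argument: the easy direction via the non-vanishing of the constant $1$, and the contrapositive of the hard direction by passing to a maximal ideal $\mathfrak{m}\supseteq I$, identifying the residue field $\C[x_1,\ldots,x_n]/\mathfrak{m}$ with $\C$ by Zariski's Lemma together with algebraic closedness, and reading off a common zero from the quotient map. Your sketch of Zariski's Lemma is the Artin--Tate-flavored induction and is sound in every step that matters: the base case via non-invertibility in a polynomial ring, the reduction to the subfield $k(\alpha_1)$, the common-denominator trick producing $d$ with $K$ integral over the localization $k[\alpha_1]_d$, the fact that a domain under an integral field extension is itself a field, and the Euclid-style observation that $k[\alpha_1]_d$ cannot be a field because some monic irreducible $p\nmid d$ stays non-invertible. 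Keeping the base field arbitrary in the induction, as you do, is exactly what makes the inductive step legitimate. The only stylistic remark is that for the paper's purposes all of this machinery is deliberately black-boxed; your write-up buys self-containedness at the cost of length, which is a reasonable trade in a blind setting.
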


For arbitrary $n$, we give two alternative proofs of Proposition~\ref{prop:aibi}, one using the Nullstellensatz and a second one
which is more explicit and which will be useful later on to compute some pairings between K-theory and K-homology.
We will only prove point (i) of the proposition, the proof of point (ii) being clearly analogous.

\begin{proof}[Proof 0: B{\'e}zout's identity ($n=1$)]
Let $R=\C[x_1]$.
Since 
$a:=\zeta_1\zeta_1^*=
x_1^{p_1}$ and
$b:=\zeta_0\zeta_0^* =\prod_{k=0}^{p_0-1}(1-q^{-2k}x_1)$ are coprime (they have
no common zeros), by Lemma~\ref{lemma:Bez} there exist $x,y\in R$ such that
$ax+by=1$, which is \eqref{eq:solu} except for a different notation.
\end{proof}

\begin{proof}[Proof 1: Hilbert's Nullstellensatz]
For $0\leq k\leq n$,
let $I_k\subset\C[x_1,\ldots,x_n]$ be the ideal generated by $\{\zeta_j\zeta_j^*\}_{j=k}^n$.
Clearly, the zero loci satisfy $\mathcal{Z}(I_k)=\mathcal{Z}(I_{k+1})\cap\mathcal{Z}(\{\zeta_k\zeta_k^*\})$.

By induction on $k\geq 1$ one proves that $(x_1,\ldots,x_n)\in\mathcal{Z}(I_k)$
if and only if $x_j=0$ for all $j\geq k$. This is is true for $k=n$, since $\zeta_n\zeta_n^*=x_n^{p_n}$.
If it is true for some $k$, by simplifying \eqref{eq:relLensH} using $x_k=x_{k+1}=\ldots=x_n=0$ one gets
$\zeta_{k-1}\zeta_{k-1}^*=x_{k-1}^{p_{k-1}}$, which vanishes only if $x_{k-1}=0$. This proves
the inductive step.

Now $\mathcal{Z}(I_1)=\{0\}$, but $x_1=\ldots=x_n=0$ implies $x_0=1$ and then $\zeta_0\zeta_0^*=1\neq 0$.
Thus $\mathcal{Z}(I_0)=\emptyset$ and from Lemma~\ref{lemma:Null} it follows that $1\in I_0$.
But any element in $I_0$ is a linear combination $a_0\zeta_0\zeta_0^*+a_1\zeta_1\zeta_1^*+\ldots+a_n\zeta_n\zeta_n^*$,
with coefficients $a_0,\ldots,a_n\in \C[x_1,\ldots,x_n]$, thus proving that \eqref{eq:solu} must admit a solution.
\end{proof}

\begin{proof}[Proof 2: by induction]
For $0\leq k\leq n+1$ consider the following statement: there exist elements $a_{0,k},\ldots,a_{n,k}\in\C[x_1,\ldots,x_n]$ such that
\begin{gather}\label{eq:Pik}
\sum_{i=0}^{k-1}a_{i,k}\zeta_i\zeta_i^*+
\sum_{i=k}^na_{i,k}z_iz_i^*=1 \;. \tag{eq$_k$}
\end{gather}
We prove this by induction on $k$. It is understood that an empty sum is zero.
The above is true for $k=0$ with $a_{0,0}=a_{1,0}=\ldots=a_{n,0}=1$.

Next, one takes the $p_k$-th power of both sides of \eqref{eq:Pik}. Note that the set of monomials $\zeta_i\zeta_i^*$, $z_iz_i^*$ 
and $a_{i,k}$ are mutually commuting. From the multinomial formula, it follows that  
$$
\sum_{s_0+\ldots+s_n=p_k}[s_0,s_1,\ldots,s_n]!
\left(\prod_{j=0}^na_{j,k}^{s_j}\right)
\left(\prod_{j=0}^{k-1}(\zeta_j\zeta_j^*)^{s_j}\right)
\left(\prod_{j=k}^n(z_jz_j^*)^{s_j}\right)= 1 \;,
$$
where
$$
[s_0,s_1,\ldots,s_n]!=\frac{(s_0+s_1+\ldots+s_n)!}{s_0!\,s_1!\,\ldots\hspace{1pt}s_n!}
$$
is the ($q=1$) multinomial coefficient. We break the sum as follows:
$$
\sum_{s_0+\ldots+s_n=p_k}=
\sum_{\substack{ s_k=p_k \\[1pt] s_i=0\;\forall\;i\neq k} }
+\sum_{i\neq k}\sum_{\substack{s_0+\ldots+s_n=p_k \\[1pt] s_0=s_1=\ldots=s_{i-1}=0 \\[1pt] s_i\neq 0}} \, .
$$
Using \eqref{eq:relLensH}:
$$
\zeta_k\zeta_k^* =
x_k^{p_k}+A_k(x_k,\ldots,x_n)\sum\nolimits_{j>k}x_j \,  ,
$$
where $A_k(x_k,\ldots,x_n)$ is a polynomial of $x_k,\ldots,x_n$. Hence:
$$
(z_kz_k^*)^{p_k}=
\zeta_k\zeta_k^*-A_k(x_k,\ldots,x_n)\sum\nolimits_{j>k}z_jz_j^* \;.
$$
Then (eq$_{k+1}$) is satisfied by defining recursively:
\begin{list}{$\bullet$}{\leftmargin=1em \itemsep=0pt}
\item for $i<k$:
$$
a_{i,k+1}:=
\sum_{\substack{s_i+\ldots+s_n=p_k \\[1pt] s_i\geq 1}}
[s_i,s_{i+1},\ldots,s_n]!
\left(\prod_{j=i}^na_{j,k}^{s_j}\right)
(\zeta_i\zeta_i^*)^{s_i-1}
\left(\prod_{j=i+1}^{k-1}(\zeta_j\zeta_j^*)^{s_j}\right)
\left(\prod_{j=k}^n(z_jz_j^*)^{s_j}\right) \;,
$$
\item for $i=k$:
$$
a_{k,k+1}:=a_{k,k}^{p_k} \;,
$$
\item and for $i>k$:
\begin{multline*}
a_{i,k+1}:=
\sum_{\substack{s_i+\ldots+s_n=p_k \\[1pt] s_i\geq 1}}
[s_i,s_{i+1},\ldots,s_n]!
\left(\prod_{j=i}^na_{j,k}^{s_j}\right)
(z_iz_i^*)^{s_i-1}
\left(\prod_{j=i+1}^n(z_jz_j^*)^{s_j}\right) \\
-a_{k,k}^{p_k}A_k(x_k,\ldots,x_n) \; .
\end{multline*}
\end{list}
The proof of Proposition~\ref{prop:aibi} is completed if one puts $a_i:=a_{i,n+1}$ for all $i=0,\ldots,n$.
\end{proof}

\begin{thm}\label{eq:recurs}
For any weight vector $\mv{\ell}=\mv{p}^{\,\sharp}$, with $\mv{p}$ pairwise coprime,
a strong connection on $\A(L_q(p,\mv{\ell}))$, with $p:=p_0p_1\ldots p_n$, is defined recursively by
\begin{align*}
\omega(1) &=1\otimes 1 \;,\\
\omega(u^k) &=\sum_{i=0}^na_i\zeta_i \, \omega(u^{k-1})\, \zeta^*_i
 \;, \qquad\textup{for}\;\; k\geq 1, \\
\omega(u^k) &=\sum_{i=0}^nb_i\zeta_i^* \, \omega(u^{k+1})\, \zeta_i
 \;, \qquad\textup{for}\;\; k\leq -1,
\end{align*}
where $a_i$,$b_i$ are the polynomials in Proposition~\ref{prop:aibi}.
Moreover, the quantum principal $\U(1)$-bundle $\A(L_q(p,\mv{\ell}))$ over $\A(\WP_q(\mv{\ell}))$ is not trivial.
\end{thm}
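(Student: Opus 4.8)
The plan is to treat the two assertions separately: first that the recursively defined $\omega$ is a strong connection --- which, by the discussion preceding the statement, is equivalent to principality of $\A(\WP_q(\mv{\ell}))\hookrightarrow\A(L_q(p,\mv{\ell}))$ --- and then that the bundle so obtained is non-trivial. For the connection I would verify its three defining properties. Normalization, $\omega(1)=1\otimes 1$, is built into the recursion. For the grading condition $\omega(u^k)\in\mc{L}_{-k}\otimes\mc{L}_k$ I would induct on $|k|$: from the coaction \eqref{eq:coactB} one reads off $\zeta_i\in\mc{L}_{-1}$ and $\zeta_i^*\in\mc{L}_1$, while the coefficients $a_i,b_i\in\C[x_1,\ldots,x_n]$ lie in $\mc{L}_0=\A(\WP_q(\mv{\ell}))$. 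Granting $\omega(u^{k-1})\in\mc{L}_{-(k-1)}\otimes\mc{L}_{k-1}$ for some $k\geq 1$, each summand $a_i\zeta_i\,\omega(u^{k-1})\,\zeta_i^*$ lands in $\mc{L}_{-k}\otimes\mc{L}_k$ because the grading is multiplicative, $\mc{L}_j\mc{L}_{j'}\subseteq\mc{L}_{j+j'}$; the range $k\leq -1$ is symmetric.

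The substantive axiom is \eqref{eq:22nontrivial}, that the multiplication map $m:A\otimes A\to A$ sends $\omega(u^k)$ to $1$. Here I would again induct, using that $m$ respects the bimodule structure, so that for $k\geq 1$
$$
m\big(\omega(u^k)\big)=\sum_{i=0}^n a_i\,\zeta_i\, m\big(\omega(u^{k-1})\big)\,\zeta_i^* \;.
$$
The inductive hypothesis $m(\omega(u^{k-1}))=1$ collapses the right-hand side to $\sum_{i=0}^n a_i\zeta_i\zeta_i^*$, which is exactly $1$ by Proposition~\ref{prop:aibi}(i), equation \eqref{eq:solu}; for $k\leq -1$ one lands instead on $\sum_i b_i\zeta_i^*\zeta_i=1$ from \eqref{eq:solu-b}. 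This confirms that $\omega$ is a strong connection, whence the extension is a quantum principal $\U(1)$-bundle.

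For non-triviality I would pass to the associated line modules. From $\omega$ one assembles, via \eqref{eq:proj}, the coinvariant idempotents $E_k$ over $B=\A(\WP_q(\mv{\ell}))$ with $B^{N_k}E_k\simeq\mc{L}_k$. Were the bundle trivial one would have $A\cong B\otimes H$, forcing every $\mc{L}_k$ to be free of rank one; then $[E_k]$ would coincide with the class of the rank-one free module in $K_0(B)$, pairing to zero with each even Fredholm module $\mc{F}_{k',\vv{r}}$, $k'\geq 1$, of \S\ref{sec:Khom} (there the flip $F$ has vanishing index on $\pi(1)\HH$) and to its rank with the basic module $\mc{F}_{0}$. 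The plan is therefore to compute the index pairing $\langle[\mc{F}_{k',\vv{r}}],[E_k]\rangle$ for a suitable $k'\geq 1$ --- the index of the Fredholm operator obtained by compressing $F$ with $\pi(E_k)$, evaluated through the explicit weighted-shift representations of Definition~\ref{def:15} and Proposition~\ref{prop:A} --- and to exhibit a non-zero value, which is incompatible with triviality.

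The main obstacle is exactly this index computation: one must represent the matrix idempotent $E_k$ on the spaces $\HH_{n,\vv{r}}$, identify the resulting weighted shifts together with their kernels and cokernels, and extract a finite integer from an a priori infinite-dimensional problem. This is where the $1$-summability of Proposition~\ref{prop} and the combinatorics of the constraints \eqref{eq:mconstr}--\eqref{eq:capconstr} become essential, and it is the step I would develop in detail alongside the explicit projections of \S\ref{sec:nove}; by contrast, the verification of the connection axioms is purely formal.
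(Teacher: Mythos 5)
Your verification of the strong connection axioms is correct and coincides with the paper's argument: normalization and the grading condition are immediate from the recursion, and the only substantive point is \eqref{eq:22nontrivial}, which follows by induction on $|k|$ from \eqref{eq:solu} and \eqref{eq:solu-b} exactly as you say. That part is fine.

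The non-triviality claim is where your proposal has a genuine gap. The route you choose --- show that triviality would force $[E_1]=[1]$ in $K_0(B)$ and hence a vanishing pairing with every $\mc{F}_{k',\vv{r}}$, $k'\geq 1$, then exhibit a non-zero pairing --- is sound in outline and is in fact the paper's \emph{alternative} argument, realized by Proposition~\ref{prop:21}, which computes $\inner{\mc{F}_{1,\vv{r}},[E_1]}=-1$. But you explicitly defer the index computation, and it is not a routine verification: in the paper it occupies a full separate proposition, requiring the explicit polynomials $a_0,a_1$ of Proposition~\ref{prop:coeffn1}, the $q$-binomial identity \eqref{eq:qbinom}, a Weierstrass M-test to justify constancy of the integer-valued series in $q$, and a $q\to 0^+$ limit. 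As written, the hard half of the non-triviality proof is a plan rather than a proof. You also miss the paper's \emph{primary} (and far cheaper) argument: triviality of a principal $\A(\U(1))$-comodule algebra is equivalent to cleftness, which for $H=\A(\U(1))$ amounts to the existence of an invertible element in $\mc{L}_1$; since the only invertible elements of $\A(S^{2n+1}_q)$ --- hence of its subalgebra $\A(L_q(p,\mv{\ell}))$ --- are the scalar multiples of $1$, which lie in $\mc{L}_0\neq\mc{L}_1$, the bundle cannot be trivial. This is the argument of \cite[Lem.~3.4]{BF12} that the paper invokes verbatim, and it requires no K-theoretic input at all.
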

\begin{proof}
The only non-trivial condition to check is the last one in \eqref{eq:22nontrivial}, which we show by induction. 
If $k\geq 1$:
$$
\omega(u^k)=\sum_{i=0}^na_i\sum\nolimits_j\zeta_i \, \omega(u^{k-1})_j^{[1]}\otimes\omega(u^{k-1})_j^{[2]}\, \zeta_i^* \;.
$$
By the inductive hypothesis:
$$
\sum\nolimits_i\omega(u^k)_i^{[1]}\omega(u^k)_i^{[2]} =\sum_{i=0}^na_i\zeta_i\zeta_i^*=1 \, , 
$$
having used \eqref{eq:solu}. Similarly, by using induction and \eqref{eq:solu-b} one shows \eqref{eq:22nontrivial} for $k\leq -1$.

As for the non-triviality of the bundle, one can repeat verbatim the proof of \cite[Lem.~3.4]{BF12},
which only uses the fact that the unique invertible elements of $\A(S^{2n+1}_q)$ are the multiples of $1$.
Alternatively, the statement is a consequence of Proposition~\ref{prop:21} below.
\end{proof}

For $n=1$ one can compute explicitly the two polynomials $a_0,a_1$ (cf.~also \cite[Prop.~6.4]{AKL14}).

We need some notations. We define the $q$-analogue of an integer $k$, for $q\neq 1$, as
$$
[k]_q:=\frac{q^k-q^{-k}}{q-q^{-1}} \;.
$$
Clearly $[k]_q\to k$ when $q\to 1^-$.
The $q$-factorial is defined recursively by
$$
[0]_q!:=1 \;,\qquad
[k]_q!:=[k]_q\cdot [k-1]_q! \quad\text{for}\;\;k\geq 1\,. 
$$
For $0\leq k\leq m$, we define the $q$-binomial through the identity of polynomials in $t$:
\begin{equation}\label{eq:qbinom}
\prod_{l=0}^{m-1}\big(1+q^{2l}t\big)=
\sum_{k=0}^{m}\sqbn{m}{k}q^{k(m-1)}t^k \; , 
\end{equation}
with the convention that an empty sum is $0$ and an empty product is $1$. Through the
substitution $t\to q^{-2(m-1)}t$ one verifies that the $q$-binomial is invariant under $q\to q^{-1}$.
From the recursive formula:
$$
\sqbn{m+1}{k}=q^{-k}\sqbn{m}{k}+q^{m-k+1}\sqbn{m}{k-1}
$$
one deduces by induction that
$$
\sqbn{m}{k}=\frac{[m]_q!}{[k]_q![m-k]_q!} \;.
$$

\begin{prop}\label{prop:coeffn1}
For $n=1$, two elements $a_0,a_1\in\C[x_1]$ satisfying \eqref{eq:solu} are given by:
$$
a_0(x_1)=\sum_{k=1}^{p_1}\binom{p_1}{k}f(x_1)^{k-1} \big\{1-f(x_1)\big\}^{p_1-k} \;,\qquad
a_1(x_1)=\left(\frac{1-f(x_1)}{x_1}\right)^{p_1} \;,
$$
where
\begin{equation}\label{eq:at}
f(t):=\sum_{k=0}^{p_0}\sqbn{p_0}{k}q^{-k(p_0-1)}(-t)^k \;.
\end{equation}
\end{prop}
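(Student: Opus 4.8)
The plan is to reduce the proposition to the $n=1$ instance of \eqref{eq:solu}, namely $a_0\,\zeta_0\zeta_0^*+a_1\,\zeta_1\zeta_1^*=1$, by first computing the two products $\zeta_i\zeta_i^*$ explicitly as polynomials in $x_1$, and then recognizing that the asserted $a_0,a_1$ turn the identity into a disguised instance of the ordinary binomial theorem. The commutativity of the $x_i$ guaranteed by \eqref{eq:relLensA} is what allows the products to be manipulated freely.

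First I would specialize \eqref{eq:relLensH} to $n=1$. For $i=1$ the sum $\sum_{j>1}x_j$ is empty, so $\zeta_1\zeta_1^*=x_1^{p_1}$; for $i=0$ the sum is $x_1$, and eliminating $x_0$ via \eqref{eq:relLensG} (that is $x_0=1-x_1$) turns each factor into $1-q^{-2k}x_1$, giving $\zeta_0\zeta_0^*=\prod_{k=0}^{p_0-1}(1-q^{-2k}x_1)$. The key identification is that this last product is exactly $f(x_1)$ as defined in \eqref{eq:at}: applying the $q$-binomial identity \eqref{eq:qbinom} with $m=p_0$ and the substitution $t\to -q^{-2(p_0-1)}t$, then reindexing the factors by $l\mapsto p_0-1-l$, transforms $\prod_{l=0}^{p_0-1}(1+q^{2l}\cdot(-q^{-2(p_0-1)}t))$ into $\sum_{k=0}^{p_0}\sqbn{p_0}{k}q^{-k(p_0-1)}(-t)^k=f(t)$. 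Writing $f:=f(x_1)$, we thus record the clean facts $\zeta_0\zeta_0^*=f$ and $\zeta_1\zeta_1^*=x_1^{p_1}$.

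Next I would check that the proposed $a_1$ is genuinely a polynomial: since every factor of $f$ equals $1$ at $x_1=0$ we have $f(0)=1$, so $x_1$ divides $1-f(x_1)$ and hence $(1-f)/x_1\in\C[x_1]$, whence $a_1\in\C[x_1]$; $a_0$ is manifestly polynomial. The verification of \eqref{eq:solu} is then purely algebraic. Using $a_1x_1^{p_1}=(1-f)^{p_1}$, which is the $k=0$ term of a binomial expansion, together with $a_0f=\sum_{k=1}^{p_1}\binom{p_1}{k}f^{k}(1-f)^{p_1-k}$, one obtains
\[
a_0\,\zeta_0\zeta_0^*+a_1\,\zeta_1\zeta_1^*
=\sum_{k=0}^{p_1}\binom{p_1}{k}f^{k}(1-f)^{p_1-k}
=\big(f+(1-f)\big)^{p_1}=1 .
\]
The only genuinely delicate point is the substitution matching $\prod_{k=0}^{p_0-1}(1-q^{-2k}t)$ with $f(t)$; everything afterward is bookkeeping and the binomial theorem, so I expect no real obstacle.
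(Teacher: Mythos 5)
Your proposal is correct and follows essentially the same route as the paper's proof: both rest on the identifications $\zeta_0\zeta_0^*=f(x_1)$ and $\zeta_1\zeta_1^*=x_1^{p_1}$ from \eqref{eq:relLensH}, the polynomiality of $(1-f)/x_1$, and the binomial expansion of $\bigl(f+(1-f)\bigr)^{p_1}=1$. The only (immaterial) difference is direction: the paper derives $a_0,a_1$ by taking the $p_1$-th power of $f+(1-f)=1$, whereas you verify the given coefficients by recognizing that same expansion.
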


\begin{proof}
Note that ${(1-f(t))} / {t}$ is a well defined polynomial in $t$.
From \eqref{eq:relLensH} and \eqref{eq:qbinom} one has:
$$
\zeta_0\zeta_0^*=\prod\nolimits_{k=0}^{p_0-1}(1-q^{-2k} x_1)=f(x_1) \;,
$$
being $x_1=1-x_0$, with  $f(t)$ given by \eqref{eq:at}.
From
$x_1=z_1z_1^*$ we get the algebraic identity
$$
\zeta_0\zeta_0^*+\frac{1-f(x_1)}{x_1} \, z_1z_1^*=1 \;.
$$
We now take the $p_1$-th power and use the binomial formula to get:
$$
\zeta_0\zeta_0^* \, \sum_{k=1}^{p_1}\binom{p_1}{k}(\zeta_0\zeta_0^*)^{k-1} \big\{1-f(x_1)\big\}^{p_1-k}
+\left(\frac{1-f(x_1)}{x_1}\right)^{p_1}\zeta_1\zeta_1^*=1 \;,
$$
where we used $(x_1)^{p_1}=\zeta_1\zeta_1^*$ (cf.~\eqref{eq:relLensH}).
If we call $a_0$ the first sum and $a_1$ the coefficient of $\zeta_1\zeta_1^*$,
the proof is concluded.
\end{proof}

\begin{prop}\label{prop:21}
Let $E=E_{1}$ be the idempotent defined in \eqref{eq:proj}, for the strong connection of Theorem~\ref{eq:recurs};
let $\mc{F}_{1,\vv{r}}$ the $1$-summable Fredholm module of \S\ref{sec:Khom}. Then $\inner{\mc{F}_{1,\vv{r}}, [E]}=-1$.
\end{prop}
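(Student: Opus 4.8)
The plan is to use the very special form of the datum in \eqref{eq:fmn}: since $F_{1,\vv{r}}$ is the flip exchanging the two copies $\HH^+_{1,\vv{r}}$ and $\HH^-_{1,\vv{r}}$ while $\pi_{1,\vv{r}}=\pi^+_{1,\vv{r}}\oplus\pi^-_{1,\vv{r}}$, the Chern character of this $1$-summable even Fredholm module is carried in degree zero by the trace $\tau(a):=\tr\big(\pi^+_{1,\vv{r}}(a)-\pi^-_{1,\vv{r}}(a)\big)$, which is finite on $\A(\WP_q(\mv{\ell}))$ exactly by Proposition~\ref{prop}. For an idempotent $E\in M_2(\A(\WP_q(\mv{\ell})))$ the index pairing then collapses to $\inner{\mc{F}_{1,\vv{r}},[E]}=\tau(\mathrm{tr}\,E)$, with $\mathrm{tr}\,E=\sum_{i=0}^1 E_{ii}$ the matrix trace. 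So first I would read off from \eqref{eq:proj} and Theorem~\ref{eq:recurs} that $\omega(u)=\sum_i a_i\zeta_i\otimes\zeta_i^*$, hence $(E_1)_{ij}=\zeta_i^*a_j\zeta_j$ and $\mathrm{tr}\,E=\zeta_0^*a_0\zeta_0+\zeta_1^*a_1\zeta_1$, with $a_0,a_1\in\C[x_1]$ the polynomials of Proposition~\ref{prop:coeffn1}.

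Next I would evaluate $\tau(\mathrm{tr}\,E)$ using the two representations of Definition~\ref{def:15} for $n=1$. The even one, $\pi^+_{1,\vv{r}}=\pi^{(1)}_0$, is the character sending $\zeta_0,x_0\mapsto 1$ and $\zeta_1,x_1\mapsto 0$, tensored with the identity; since $a_0(0)=1$ (immediate from \eqref{eq:solu} at $x_1=0$, because $\zeta_0\zeta_0^*=\prod_{k=0}^{p_0-1}(1-q^{-2k}x_1)$ equals $1$ there while $\zeta_1\zeta_1^*=x_1^{p_1}$ vanishes), it follows that $\pi^+_{1,\vv{r}}(\mathrm{tr}\,E)=\id$. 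The odd one, $\pi^-_{1,\vv{r}}=\pi^{(1)}_1$, is the genuine infinite-dimensional irreducible representation, where $x_1$ is diagonal with eigenvalue $t_m:=q^{2r_0+2p_0m}$ on $\ket{m}$, $\zeta_1$ is diagonal, and $\zeta_0$ is a one-step weighted shift $\ket{m}\mapsto c_m\ket{m+1}$ with $c_m^2=\prod_{k=1}^{p_0}(1-q^{2k}t_m)$. I would then compute the two diagonal entries of $\mathrm{tr}\,E$ in this representation.

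The heart of the matter is a telescoping identity. Set $g(m):=t_m^{p_1}a_1(t_m)$, the diagonal entry of $\zeta_1^*a_1\zeta_1$ on $\ket{m}$ (using $\zeta_1^*\zeta_1=x_1^{p_1}$). Since $t_{m+1}=q^{2p_0}t_m$, a reindexing gives $c_m^2=\prod_{k=0}^{p_0-1}(1-q^{-2k}t_{m+1})$, i.e.\ $c_m^2$ is the value of $\zeta_0\zeta_0^*$ at $t_{m+1}$; plugging this together with $a_0(t_{m+1})$ into relation \eqref{eq:solu} evaluated on $\ket{m+1}$ shows that the diagonal entry of $\zeta_0^*a_0\zeta_0$ on $\ket{m}$, namely $c_m^2 a_0(t_{m+1})$, equals $1-g(m+1)$. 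Hence the diagonal entry of $\pi^-_{1,\vv{r}}(\mathrm{tr}\,E)$ on $\ket{m}$ is $1+g(m)-g(m+1)$, so the diagonal of $\pi^+_{1,\vv{r}}(\mathrm{tr}\,E)-\pi^-_{1,\vv{r}}(\mathrm{tr}\,E)$ on $\ket{m}$ is $g(m+1)-g(m)$ and the trace telescopes to $\lim_{m\to\infty}g(m)-g(0)=-g(0)$; convergence is legitimate since $t_m\to 0$ and $p_1\ge 1$ force $g(m)\to 0$ (and is guaranteed in any case by Proposition~\ref{prop}).

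The main obstacle, and the precise point where the constraint \eqref{eq:ri} enters, is the boundary term $g(0)$. Here $t_0=q^{2r_0}$ and the value of $\zeta_0\zeta_0^*$ at $t_0$ is $\prod_{k=0}^{p_0-1}(1-q^{-2k}q^{2r_0})=\prod_{k=0}^{p_0-1}(1-q^{2(r_0-k)})$; the factor with $k=r_0$ vanishes precisely because $0\le r_0<p_0$, so this value is $0$. Feeding it into \eqref{eq:solu} on $\ket{0}$ gives $g(0)=1-a_0(t_0)\cdot 0=1$. Therefore $\inner{\mc{F}_{1,\vv{r}},[E]}=\tau(\mathrm{tr}\,E)=-g(0)=-1$, as claimed. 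I would finally sanity-check the overall normalization and sign of $\tau$ against the canonical module $\mc{F}_{0,\emptyset}$, for which the same formula gives $\tau(1)=1$.
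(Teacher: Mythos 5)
Your proof is correct, and although the setup coincides with the paper's --- the pairing written as $\tr\big((\pi^+_{1,\vv{r}}-\pi^-_{1,\vv{r}})(\mathrm{tr}\,E)\big)$ with $\mathrm{tr}\,E=\sum_i\zeta_i^*a_i\zeta_i$, the terms with $i\geq 2$ killed by the representations, and the vanishing of $\zeta_0\zeta_0^*$ at $x_1=q^{2r_0}$ (forced by $0\leq r_0<p_0$) as the ultimate source of the $-1$ --- the way you evaluate the resulting series is genuinely different. The paper substitutes the explicit polynomials of Proposition~\ref{prop:coeffn1} and a $q$-binomial identity to rewrite the summand as $\big[\{1-f(q^{2p_0}x_1)\}^{p_1}-\{1-f(x_1)\}^{p_1}\big]_{x_1=t_m}$, then shows the series is absolutely convergent and continuous in $q$, hence (being integer-valued) constant, and computes it in the limit $q\to 0^+$. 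You instead recognize the summand as an exact difference $g(m+1)-g(m)$ with $g(m)=t_m^{p_1}a_1(t_m)$ --- which is in fact the same quantity, since $t^{p_1}a_1(t)=\{1-f(t)\}^{p_1}$ for the explicit $a_1$ --- and telescope, with $g(\infty)=0$ and $g(0)=1$ read off from \eqref{eq:solu} together with $f(q^{2r_0})=0$. Your route buys two things: it uses only the defining relation \eqref{eq:solu} rather than the explicit solution, so it is insensitive to which $a_i$'s are chosen (and in particular sidesteps the identification, for $n\geq 2$, of the reduced coefficients with those of the $n=1$ case), and it evaluates the sum exactly for every $q$ instead of via a continuity-plus-limit argument. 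One small remark: your $c_m^2=\prod_{k=1}^{p_0}(1-q^{2k}t_m)$ is the value dictated by relation \eqref{eq:relLensL}, which is also what the paper's own proof uses; the literal square-root factor printed in Definition~\ref{def:15} omits the $r_0$-shift, and you made the algebraically consistent choice.
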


\begin{proof}
In the present case $\vv{r}=(r_0)$ is an integer, $0\leq r_0<p_0$, and the Hilbert space is $\ell^2(\N)$ with 
orthonormal basis $\{\ket{m_1},m_1\in\N\}$.
The representations $\pi^\pm_{1,\vv{r}}$ satisfy $\pi^\pm_{1,\vv{r}}(x_i)=0$, for all $i\geq 2$, 
$\pi^\pm_{1,\vv{r}}(x_0)=1-\pi^\pm_{1,\vv{r}}(x_1)$, and
$$
\pi^+_{1,\vv{r}}(x_1)=0\;,\qquad
\pi^-_{1,\vv{r}}(x_1)\ket{m_1}=q^{2r_0+2p_0m_1}\ket{m_1} \;.
$$
Let us write $a\sim b$ if $a-b$ is in the kernel
of both $\pi^+_{1,\vv{r}}$ and $\pi^-_{1,\vv{r}}$. So, $x_i\sim 0$ for all $i\geq 2$ and 
$x_1\sim 1-x_0$. \emph{De facto}, the computation reduces to the case $n=1$.

The pairing with any idempotent $E=(E_{ij})$ is
$$
\inner{\mc{F}_{1,\vv{r}}, [E]}=\tr_{\ell^2(\N)}
\big(\pi^+_{1,\vv{r}}-\pi^-_{1,\vv{r}}\big)\big(\tr(E)\big) \;.
$$
If $E=E_1$ is the idempotent in \eqref{eq:proj}, with strong connection $\omega(u)=\sum_{i=0}^na_i \, \zeta_i\otimes\zeta^*_i$
as in Theorem~\ref{eq:recurs}, we get $\tr(E)=\sum_{i=0}^n\zeta^*_ia_i\zeta_i$.
Modulo elements in the kernel of  $\pi^\pm_{1,\vv{r}}$ the coefficients $a_0(x_1),a_1(x_1)$ are those in Proposition~\ref{prop:coeffn1},
while all other coefficients $a_i$'s are zero.
From $x_1\,\zeta_0=q^{2p_0}\zeta_0\,x_1$ and $x_1\,\zeta_1\sim\zeta_1\,x_1$ we get
\begin{align*}
\tr(E)
&\sim a_0(q^{2p_0}x_1)\zeta^*_0\zeta_0+a_1(x_1)\zeta^*_1\zeta_1 \\
&\sim a_0(q^{2p_0}x_1)\prod_{k=1}^{p_0}(1-q^{2k}x_1)
+a_1(x_1) (x_1)^{p_1} \\
&\sim a_0(q^{2p_0}x_1)\prod_{k=1}^{p_0}(1-q^{2k}x_1)
+\big\{1-f(x_1)\big\}^{p_1}
\end{align*}
where we used \eqref{eq:relLensL} and then Proposition~\ref{prop:coeffn1};
with $f(t)$ the function in \eqref{eq:at}.
In the above expressions, we denoted by $a_0(q^{2p_0}x_1)$ the polynomial obtained from the element
$a_0(x_1)$ in \eqref{prop:coeffn1} with a replacement $x_1\mapsto q^{2p_0}x_1$.
From \eqref{eq:qbinom}:
$$
\prod_{k=1}^{p_0}(1-q^{2k}x_1)=\prod_{l=0}^{p_0-1}(1-q^{2l}q^2x_1)=
\sum_{k=0}^{p_0}\sqbn{p_0}{k}q^{k(p_0-1)}(-q^2x_1)^k=
f(q^{2p_0}x_1)
 \;.
$$
But the identity $a_0\zeta_0\zeta_0^*+a_1\zeta_1\zeta_1^*=1$ reads 
$a_0(q^{2p_0}x_1)f(q^{2p_0}x_1)+a_1(q^{2p_0}x_1)\cdot(q^{2p_0}x_1)^{p_1}=1$, when $x_1$ is replaced by $q^{2p_0}x_1$. 
Therefore:
$$
\tr(E)
\sim 1-\big\{1-f(q^{2p_0}x_1)\big\}^{p_1}+\big\{1-f(x_1)\big\}^{p_1} \;.
$$
Since $x_1$ is diagonal in both representations and $f(0)=1$,
$$
\inner{\mc{F}_{1,\vv{r}}, [E]}
=\sum_{m_1\geq 0}\Big[\big\{1-f(q^{2p_0}x_1)\big\}^{p_1}-\big\{1-f(x_1)\big\}^{p_1}\Big]_{x_1=q^{2r_0+2p_0m_1}} \;.
$$
From the condition $0<q<1$ it follows that 
$$
|1-f(x_1)|_{x_1=q^{2r_0+2p_0m_1}}\leq q^{2p_0m_1}\sum_{k=1}^{p_0}\sqbn{p_0}{k}q^{-k(p_0-1)}
 \;,
$$
and similar for $1-f(q^{2p_0}x_1)$.
Thus, the series in $m_1$ is bounded by $q^{2p_0p_1m_1}$ times a constant. 
By Weierstrass M-test it is absolutely convergent in the open interval $[0,1[$ and then continuous. 
Being integer-valued in $]0,1[$, it is constant in the interval (including $0$) and it can be computed for $q\to 0^+$, by inverting summation and this limit. Now,
$$
1-f(q^{2p_0}x_1)\big|_{x_1=q^{2r_0+2p_0m_1}}
= 1-\prod_{k=0}^{p_0-1}(1-q^{2(p_0m_1+r_0+p_0-k)})
= 1-1+O(q)=O(q)
\;,
$$
since $p_0m_1+r_0+p_0-k\geq p_0-k\geq 1$.
Moreover
$$
1-f(x_1)\big|_{x_1=q^{2r_0+2p_0m_1}}=
1-\prod_{k=0}^{p_0-1}(1-q^{2(p_0m_1+r_0-k)})=:c_{m_1}
\;.
$$
Being $0\leq r_0<p_0$, for $m_1=0$ at least one term in the product
has $k=r_0$, so the product is zero and $c_{m_1=0}=1$. On the other hand,
if $m_1\geq 1$, then $p_0m_1+r_0-k\geq p_0-k\geq 1$ and so the product
is $1+O(q)$ and $c_{m_1}=O(q)$. We conclude that
$$
\lim_{q\to 0^+}\inner{\mc{F}_{1,\vv{r}}, [E]}
=-\sum_{m_1\geq 0}\lim_{q\to 0^+}c_{m_1}^{p_1}=-c_0^{p_1}=-1 \;. \vspace{-10pt}
$$
\end{proof}

Proposition~\ref{prop:21} allows us to computes the pairing of $P$ with Fredholm modules that are pullbacks from $\A(\WP_q(\ell_0,\ell_1))$, 
thanks to the fact that for $n=1$ one has an explicit expression for the coefficients in \eqref{eq:solu} (and then for the trace of $P$).

In order to compute the pairing with all the other Fredholm modules, one would need the expression of $P$
for arbitrary $n$. Such a computation seems intractable in full generality. We study some interesting examples in the next section.

\section{Examples}\label{sec:sette}
Let us compute the coefficients in \eqref{eq:solu} for particular values of the
weight vector. In all of the examples below, we will assume that all the weights are
equal but one, say $\ell_{i_0}$. In the cases $i_0=0$ and $i_0=n$, the coefficients in
\eqref{eq:solu} can be explicitly computed.  
We start with these two examples, in reverse order.

\begin{prop}
Let $\mv{\ell}=\mv{p}^{\,\sharp}$ with $p_i=1$ for all $i\neq n$.
A set of elements $a_0,\ldots,a_n$ satisfying \eqref{eq:solu} is given by:
$$
a_n=1 \;,\qquad
a_0=a_1=\ldots=a_{n-1}=\sum_{k=0}^{p_n-1}(x_n)^k \;.
$$
\end{prop}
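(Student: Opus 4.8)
The plan is to verify \eqref{eq:solu} by direct computation, simplifying each product $\zeta_i\zeta_i^*$ under the hypothesis $p_i=1$ for $i\neq n$. First I would invoke the explicit formula \eqref{eq:relLensH}. For each $i\neq n$, since $p_i=1$ the product over $k$ reduces to the single term $k=0$, and the corresponding factor $(1-q^{0})$ vanishes; hence $\zeta_i\zeta_i^*=x_i$. This is of course consistent with $\zeta_i=z_i$ and $x_i=z_iz_i^*$ when $p_i=1$. For $i=n$, the inner sum $\sum_{j>n}x_j$ appearing in \eqref{eq:relLensH} is empty, so every factor collapses to $x_n$ and the product yields $\zeta_n\zeta_n^*=x_n^{p_n}$.

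With these substitutions, and using that all the $x_i$ commute by \eqref{eq:relLensA} so that the proposed coefficients may be freely moved across, the left-hand side of \eqref{eq:solu} becomes
$$
\Big(\sum_{k=0}^{p_n-1}x_n^k\Big)\sum_{i=0}^{n-1}x_i + x_n^{p_n} \;.
$$
Next I would apply the sphere relation \eqref{eq:relLensG}, that is $x_0+\ldots+x_n=1$, to replace $\sum_{i=0}^{n-1}x_i$ by $1-x_n$.

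The final step is the telescoping identity for the geometric sum, $(1-x_n)\sum_{k=0}^{p_n-1}x_n^k=1-x_n^{p_n}$. Substituting, the expression reduces to $1-x_n^{p_n}+x_n^{p_n}=1$, which is precisely \eqref{eq:solu}. There is no genuine obstacle in this argument — the whole proof is a short verification — the only point deserving care being the correct reading of \eqref{eq:relLensH} in the two degenerate cases, namely that $p_i=1$ annihilates every off-diagonal contribution for $i\neq n$, and that the empty sum $\sum_{j>n}x_j$ makes $\zeta_n\zeta_n^*$ a pure power of $x_n$.
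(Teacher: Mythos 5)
Your verification is correct and follows essentially the same route as the paper: identify $\zeta_i\zeta_i^*=x_i$ for $i\neq n$ and $\zeta_n\zeta_n^*=x_n^{p_n}$, then use $x_0+\ldots+x_{n-1}=1-x_n$ and the telescoping geometric sum. The only cosmetic difference is that the paper justifies $\zeta_n\zeta_n^*=(z_nz_n^*)^{p_n}$ by normality of $z_n$ while you read it off the empty sum in \eqref{eq:relLensH}; these are equivalent.
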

\begin{proof}
Since $\zeta_i\zeta_i^*=z_iz_i^*=x_i$ for all $i\neq n$ and $\zeta_n\zeta_n^*=(z_nz_n^*)^{p_n}=x_n^{p_n}$
(using the relation \eqref{eq:relLensH} and the fact that $z_n$ is normal), we get
\begin{align*}
a_0\zeta_0\zeta_0^*+\ldots+a_n\zeta_n\zeta_n^* & = (x_0+x_1+\ldots+x_{n-1})\sum_{k=0}^{p_n-1}(x_n)^k+x_n^{p_n} \\
& = (1-x_n) \sum_{k=0}^{p_n-1}(x_n)^k+x_n^{p_n} = (1-x_n^{p_n}) + x_n^{p_n} \\
& = 1 \; .
\end{align*}
Hence \eqref{eq:solu} is satisfied.
\end{proof}

\begin{prop}
Let $\mv{\ell}=\mv{p}^{\,\sharp}$ with $p_i=1$ for all $i\neq 0$.
With $f(t)$ the function in \eqref{eq:at}, a set of elements $a_0,\ldots,a_n$ satisfying \eqref{eq:solu} is given by:
$$
a_0=1 \;,\qquad
a_1=\ldots=a_n=\frac{1-f(x_1+\ldots+x_n)}{x_1+\ldots+x_n} \;.
$$
\end{prop}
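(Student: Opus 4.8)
The plan is to reduce the whole verification to a single computation of $\zeta_0\zeta_0^*$, all the other factors being immediate. Since $p_i=1$ for every $i\neq 0$, one has $\zeta_i=z_i$ and hence $\zeta_i\zeta_i^*=z_iz_i^*=x_i$ for $i\neq 0$; thus the only factor needing attention is $\zeta_0\zeta_0^*$.

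First I would evaluate $\zeta_0\zeta_0^*$ by specializing relation \eqref{eq:relLensH} to $i=0$, which gives
$$
\zeta_0\zeta_0^*=\prod_{k=0}^{p_0-1}\Big\{x_0+(1-q^{-2k})(x_1+\ldots+x_n)\Big\}\,,
$$
and then eliminate $x_0$ through the sphere relation \eqref{eq:relLensG}, $x_0=1-(x_1+\ldots+x_n)$, to obtain
$$
\zeta_0\zeta_0^*=\prod_{k=0}^{p_0-1}\big\{1-q^{-2k}(x_1+\ldots+x_n)\big\}\,.
$$
Writing $y:=x_1+\ldots+x_n$, I would identify this product with $f(y)$, for $f$ the function \eqref{eq:at}: this is precisely the $q$-binomial identity \eqref{eq:qbinom} after the substitution $t\to-q^{-2(p_0-1)}t$, and it is the very step already carried out for $n=1$ in the proof of Proposition~\ref{prop:coeffn1}, now with $x_1$ replaced by the sum $y$.

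Next I would observe that $f(0)=1$, since only the $k=0$ term of \eqref{eq:at} survives at $t=0$; consequently $1-f(t)$ is divisible by $t$, so $\frac{1-f(t)}{t}$ is a genuine polynomial and the proposed $a_1=\ldots=a_n=\frac{1-f(y)}{y}$ indeed lie in $\C[x_1,\ldots,x_n]$. Finally, because every quantity in sight is a polynomial in the mutually commuting generators $x_i$ (relation \eqref{eq:relLensA}), substituting into \eqref{eq:solu} collapses at once:
$$
a_0\,\zeta_0\zeta_0^*+\sum_{i=1}^n a_i\,\zeta_i\zeta_i^*
= f(y)+\frac{1-f(y)}{y}\sum_{i=1}^n x_i
= f(y)+\frac{1-f(y)}{y}\,y = 1\,.
$$

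The only step carrying any content is the identification $\prod_{k=0}^{p_0-1}(1-q^{-2k}y)=f(y)$; everything else is bookkeeping. I do not expect a genuine obstacle here, since this computation merely transplants the $n=1$ case of Proposition~\ref{prop:coeffn1} to general $n$ by substituting the single variable $x_1$ with $x_1+\ldots+x_n$, the sphere relation \eqref{eq:relLensG} ensuring that the argument of $f$ is exactly the complementary combination $1-x_0$.
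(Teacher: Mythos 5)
Your proof is correct and follows essentially the same route as the paper's: the paper likewise reduces everything to the identity $\zeta_0\zeta_0^*=f(t)$ with $t=1-x_0=x_1+\ldots+x_n$ and the polynomiality of $(1-f(t))/t$, citing the $n=1$ computation in Proposition~\ref{prop:coeffn1} for both facts rather than rederiving them from \eqref{eq:relLensH} as you do. The extra detail you supply is accurate but not a different argument.
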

\begin{proof}
We know from the proof of Proposition~\ref{prop:coeffn1} that $ (1-f(t) ) / t $ is a well
defined polynomial of $t$. Now $t:=1-x_0 = x_1+x_2+\ldots+x_n$, and it still holds 
that $\zeta_0\zeta_0^*=f(t)$. Condition \eqref{eq:solu} reduces to
$$
\zeta_0\zeta_0^*+\frac{1-f(t)}{t}(z_1z_1^*+\ldots+z_nz_n^*)=1 \; , 
$$
since $z_1z_1^*+\ldots+z_nz_n^* = x_1+\ldots+x_n = t $ and $\zeta_0\zeta_0^*+1-f(t)=1$. 
\end{proof}

\begin{rem}
Fix $i_0\in\{0,\ldots,n\}$ and let $\mv{\ell}=\mv{p}^{\,\sharp}$ with $p_i=1$ for all $i\neq i_0$. \\
From previous examples one may think that a solution to \eqref{eq:solu} is given by
$a_{i_0}=1$ and $a_i=(1-\zeta_{i_0}\zeta_{i_0}^*)/(1-x_{i_0})$. Indeed, with these choices
$$
\sum a_i\zeta_i\zeta_i^*=\zeta_{i_0}\zeta_{i_0}^*
+\frac{1-\zeta_{i_0}\zeta_{i_0}^*}{1-x_{i_0}}\sum_{i\neq i_0}z_iz_i^*=1
$$
is a simple algebraic identity, since $\sum_{i\neq i_0}z_iz_i^*=1-x_{i_0}$
simplifies the denominator. Unfortunately, in general
$1-x_{i_0}$ does not divide $1-\zeta_{i_0}\zeta_{i_0}^*$, so that the quotient
is not a polynomial in $x_1,\ldots,x_n$.
For example, for $n=2$, if $\mv{p}=(1,2,1)$ (so $i_0=1$) one has 
$1-\zeta_1\zeta_1^*=1-x_1(x_1+x_2-q^{-2}x_2)$, which is not divisible by $1-x_1$:
that is it does not vanish if $x_1=1$, and arbitrary $x_2$, unless $q=1$.
\end{rem}

\section{On C*-algebras and K-homology}\label{sec:nove}
For $\lambda\in\U(1)$, let $\psi^{(2n+1)}_\lambda$ be the representation of $\A(S^{2n+1}_q)$
given in \S\ref{sec:irreps} composed with the automorphism $z_n\mapsto\lambda z_n$.
Every bounded irreducible representation of $\A(S^{2n+1}_q)$ is isomorphic to a representation
$\psi^{(2k+1)}_\lambda$, for $0\leq k\leq n$, pulled back from $\A(S^{2k+1}_q)$, see \cite{HL04}.

The direct sum of all these representations is faithful --- it is the so-called \emph{reduced atomic representation} ---,
and the $C^*$-completion of $\A(S^{2k+1}_q)$ in the associated norm is the universal $C^*$-algebra $C(S^{2k+1}_q)$
(cf.~\cite[Prop.~10.3.10]{KR83}).

In this section, we study the $C^*$-subalgebra $C(\WP_q(\mv{\ell}))$ completion of $\A(\WP_q(\mv{\ell}))$ in $C(S^{2k+1}_q)$.
It is not obvious whether or not by restriction of $\psi^{(2n+1)}_\lambda$ one gets all equivalence classes of irreducible representations of $C(\WP_q(\mv{\ell}))$, so that their direct sum would be the reduced atomic representation and the
$C^*$-algebra be universal. In fact, classifying irreducible representations of $C(\WP_q(\mv{\ell}))$ goes beyond the scope of this paper.

We limit ourself to exhibit a family of projections in $C(\WP_q(\mv{\ell}))$ and to
compute their pairing with the Fredholm modules $\mc{F}_{k,\vv{r}}$ of \S\ref{sec:Khom}. 
This will show that the classes in K-homology of these Fredholm modules are linearly independent over $\Z$.

It is convenient to use the operators $X_i:=\sum_{j\geq i}x_j$, for $1\leq i\leq n$ (recall that $x_i=z_iz_i^*$),
since one easily verifies that, for any $1\leq m\leq n$ and $\vec{k}\in\N^m$:
$$
\psi^{(2m+1)}_\lambda(X_i)\ket{\vec{k}}=
\left\{\!\!\begin{array}{ll}
q^{2(k_1+\ldots+k_i)}\ket{\vec{k}} &\text{if}\;i\leq m,\\[3pt]
0 &\text{if}\;i>m.
\end{array}\right.
$$
Then, for each $\alpha\in\N$ and index $1\leq i\leq n$, the sequence
$$
q^{-2\alpha}X_i\prod_{\substack{\beta=0,\ldots,N \\[1pt] \beta\neq \alpha}}\frac{X_i-q^{2\beta}}{q^{2\alpha}-q^{2\beta}}
\xrightarrow[\;N\to\infty\;]{} P(X_i,q^{2\alpha})
$$
is norm convergent to the spectral projection $P(X_i,q^{2\alpha})$ projecting onto the eigenspace of $X_i$ associated to the
eigenvalue $q^{2\alpha}$.
For $\vec{\alpha}=(\alpha_1,\ldots,\alpha_m)\in\N^m$ with $1\leq m\leq n$, let
$$
P_m(\vec{\alpha})=\prod_{i=1}^mP(X_i,q^{2(\alpha_1+\ldots+\alpha_i)}) \;.
$$
We now compute the pairing between the class of the projection $P_m(\vec{\alpha})$ in $K_0(C(\WP_q(\mv{\ell}))$
and the class of the Fredholm module $\mc{F}_{k,\vv{r}}$ of \S\ref{sec:Khom}.

\begin{thm}\label{thm:9.1}
Let $1\leq h\leq n$, let $\vv{r}=(r_0,\ldots,r_{h-1})$ satisfy \eqref{eq:ri},
let $1\leq m\leq n$, and $\vec{\alpha}=(\alpha_1,\ldots,\alpha_m)\in\N^m$.
If $h\geq m$ and $\alpha_{i+1}-r_i\in p_i\N$ for all $0\leq i<m$, the pairing is
$$
\inner{[\mc{F}_{h,\vv{r}}],[P_m(\vec{\alpha})]}=
(-1)^m\binom{N(\vv{r},\vec{\alpha})}{h-m}
$$
where the integer $N(\vv{r},\vec{\alpha})$ is given by
$$
N(\vv{r},\vec{\alpha})=\sum_{0\leq i<m}\frac{\alpha_{i+1}-r_i}{p_i} \;,
$$
and by convention a binomial $\binom{r}{s}$ is zero if $s>r$.
In all other cases, the pairing is zero.
\end{thm}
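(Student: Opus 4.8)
The plan is to evaluate the index pairing by the trace formula for an even $1$-summable Fredholm module, exactly as in the proof of Proposition~\ref{prop:21}. For the single projection $P=P_m(\vec{\alpha})\in C(\WP_q(\mv{\ell}))$ this reads
$$
\inner{[\mc{F}_{h,\vv{r}}],[P]}=\tr\big((\pi^+_{h,\vv{r}}-\pi^-_{h,\vv{r}})(P)\big)=\sum_{k=0}^{h}(-1)^k\,\tr\big(\pi^{(h)}_k(P)\big),
$$
the operator $(\pi^+_{h,\vv{r}}-\pi^-_{h,\vv{r}})(P)$ being trace class by Proposition~\ref{prop}. I stress at the outset that the individual traces $\tr(\pi^{(h)}_k(P))$ will generally diverge, so the substance of the argument is that the alternating $k$-sum collapses to a finite count; the absolute convergence furnished by Proposition~\ref{prop} is what legitimates the rearrangements below.

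First I would diagonalize the projection. Setting $X_i:=\sum_{j\ge i}x_j$ and inserting the explicit formulas of Definition~\ref{def:15}, a telescoping computation gives on $\mc{V}^h_k$
$$
\pi^{(h)}_k(X_i)\ket{\vec{m}}=q^{2\epsilon_i(\vec{m})}\ket{\vec{m}}\ \ (i\le k),\qquad \pi^{(h)}_k(X_i)=0\ \ (i>k),
$$
where $\epsilon_i(\vec{m}):=\sum_{j=0}^{i-1}r_j+\sum_{j=1}^{i}p_{j-1}(m_j-m_{j-1})$ with $m_0:=0$; indeed $\pi^{(h)}_k(x_l)=q^{2\epsilon_l}-q^{2\epsilon_{l+1}}$ for $l<k$ and $\pi^{(h)}_k(x_k)=q^{2\epsilon_k}$, so $\sum_{l=i}^{k}\pi^{(h)}_k(x_l)$ telescopes to $q^{2\epsilon_i}$. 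Consequently $\pi^{(h)}_k(P_m(\vec{\alpha}))$ is the diagonal projection onto those $\ket{\vec{m}}\in\mc{V}^h_k$ with $\epsilon_i(\vec{m})=s_i$ for all $1\le i\le m$, where $s_i:=\alpha_1+\ldots+\alpha_i$; since $q^{2s_i}\ne0$ this forces $k\ge m$, so in particular the pairing vanishes whenever $h<m$. Reading the equations $\epsilon_i=s_i$ recursively as $r_{i-1}+p_{i-1}(m_i-m_{i-1})=\alpha_i$, they determine the first $m$ coordinates uniquely through $m_i-m_{i-1}=(\alpha_i-r_{i-1})/p_{i-1}$, and admit a non-negative integer solution precisely when $\alpha_{i+1}-r_i\in p_i\N$ for $0\le i<m$; when this fails no basis vector survives and the pairing is $0$. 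In the admissible case the first $m$ coordinates satisfy $m_1\le\ldots\le m_m$ with $m_m=N(\vv{r},\vec{\alpha})=:N$.

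It then remains to sum over the free coordinates $(m_{m+1},\ldots,m_h)\in\N^{h-m}$, each contributing $(-1)^m\sum_{a}(-1)^a$, the sum running over those $0\le a\le h-m$ with $\vec{m}\in\mc{V}^h_{m+a}$. The combinatorial heart of the proof is the identity
$$
\sum_{\substack{0\le a\le h-m\\ \vec{m}\in\mc{V}^h_{m+a}}}(-1)^a=\mathbf{1}\big[N>m_{m+1}>m_{m+2}>\ldots>m_h\ge0\big].
$$
To establish it I would observe that, with $m_1\le\ldots\le m_m=N$ fixed, the admissible values of $a$ are exactly those for which $N\le m_{m+1}\le\ldots\le m_{m+a}$ (the non-decreasing part of $\mc{V}^h_{m+a}$) and $m_{m+a+1}>\ldots>m_h$ (its strictly decreasing part); these form a contiguous block $\{B,\ldots,A\}$ in $a$. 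A short analysis of the positions of the ascents and strict descents of the sequence $N,m_{m+1},\ldots,m_h$ shows that this block has length at most two, and has odd length (so that $\sum_a(-1)^a=+1$) exactly when the whole sequence is strictly decreasing; in every other case the block has even length or is empty, and the alternating sum vanishes. Summing the right-hand side over $(m_{m+1},\ldots,m_h)\in\N^{h-m}$ then counts the strictly decreasing tuples with entries in $\{0,1,\ldots,N-1\}$, that is $\binom{N}{h-m}$ (zero when $N<h-m$), whence
$$
\inner{[\mc{F}_{h,\vv{r}}],[P_m(\vec{\alpha})]}=(-1)^m\binom{N}{h-m}.
$$

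The hard part will be the combinatorial cancellation together with its interchange with the divergent $k$-summation: it is precisely this step that converts a formally infinite expression into the finite binomial, and it rests on the absolute convergence of Proposition~\ref{prop} to justify reorganizing the trace as a sum over free coordinates. Everything else---computing $\pi^{(h)}_k(X_i)$, pinning down the first $m$ coordinates, and recognizing the vanishing cases $h<m$ and the failure of the divisibility conditions---is a direct substitution into the formulas of Definition~\ref{def:15}.
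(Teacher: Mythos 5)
Your proposal is correct and follows essentially the same route as the paper's proof: diagonalize $P_m(\vec{\alpha})$ in the representations $\pi^{(h)}_k$ via the eigenvalues $q^{2\epsilon_i}$ of $X_i$, observe that the divisibility conditions pin down $m_1,\ldots,m_m$ with $m_m=N$, cancel the contributions of adjacent $k$'s, and count the surviving strictly decreasing tails to get $\binom{N}{h-m}$. The only (cosmetic) difference is that the paper organizes the cancellation subspace-by-subspace via the orthogonal decomposition of $\HH_n$ into the intersections $\mc{V}^h_k\cap\mc{V}^h_{k-1}$ from Lemma~\ref{le:pl}, whereas you run the equivalent argument basis-vector-by-basis-vector (each $\ket{\vec{m}}$ lies in a contiguous block of at most two $\mc{V}^h_k$'s, so the alternating sum survives only on the singleton block $\{m\}$); note also that since the $k$-sum is finite and the resulting operator is diagonal with only finitely many nonzero entries, no delicate rearrangement of divergent series is actually needed.
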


\begin{proof}
From Definition~\ref{def:15}, the pairing is 
$$
\inner{[\mc{F}_{h,\vv{r}}],[P_m(\vec{\alpha})]}
=\tr_{\HH_n}\bigg\{\sum_{\substack{0\leq k\leq h\\[1pt] k\;\text{even}}}\pi^{(h)}_k-
\sum_{\substack{0\leq k\leq h\\[1pt] k\;\text{odd}}}\pi^{(h)}_k\bigg\}\big(P_m(\vec{\alpha})\big) \;.
$$
We can start the sum from $m$, rather than $0$, because if $k<m$, the operator $y_m$ is in the kernel of the representation
and then $\pi^{(h)}_k(P(y_m,q^\beta))$ vanishes for any $\beta\in\N$. So, the pairing is $0$ if
$h<m$, while for $h\geq m$:
$$
\inner{[\mc{F}_{h,\vv{r}}],[P_m(\vec{\alpha})]}
=\tr_{\HH_n}\sum_{m\leq k\leq h}(-1)^k\pi^{(h)}_k\big(P_m(\vec{\alpha})\big) \;.
$$
Recall that $\HH_n$ is the orthogonal direct sum of $\mc{V}_k^h\cap\mc{V}_{k-1}^h$ over $k=1,\ldots,n$, plus the
joint kernel of all representations, and note that in fact $\mc{V}_0^h\subset\mc{V}_1^h$ and $\mc{V}_h^h\subset\mc{V}_{h-1}^h$.

A basis vector $\ket{\vec{m}}\in\mc{V}_k^h$ is in the range of $\pi^{(h)}_k\big(P_m(\vec{\alpha})\big)$
if and only if
$$
q^{2\sum_{j=1}^ir_{j-1}+p_{j-1}(m_j-m_{j-1})}=q^{2\sum_{j=1}^i\alpha_j} \quad\forall\;i=1,\ldots,m \;,
$$
which is equivalent to
\begin{equation}\label{eq:rap}
r_i+p_i(m_{i+1}-m_i)=\alpha_{i+1} \quad\forall\;0\leq i<m \;.
\end{equation}
So the range is zero (in all representations $\pi^{(h)}_k$) unless $\alpha_{i+1}\geq r_i$ and
$p_i$ divides $\alpha_{i+1}-r_i$, for all $0\leq i<m$. Assume this is satisfied.

For all $1\leq k\leq h-1$, from \eqref{eq:mconstr} and \eqref{eq:capconstr},
$$
\mc{V}_k^h=(\mc{V}_k^h\cap\mc{V}_{k-1}^h)\oplus(\mc{V}_{k+1}^h\cap\mc{V}_k^h) \;.
$$
For $m<k\leq h$, $\ket{\vec{m}}\in\mc{V}_k^h\cap\mc{V}_{k-1}^h$ is in the range of $\pi^{(h)}_k\big(P_m(\vec{\alpha})\big)$ if and only if
it is in the range of $\pi^{(h)}_{k-1}\big(P_m(\vec{\alpha})\big)$, and the total contribution to the pairing is zero. Similarly,
vectors in $\mc{V}_{k+1}^h\cap\mc{V}_k^h$, for $m\leq k<h$, give no contribution.
Non-zero contributions then can only come from the restriction of
$\pi^{(h)}_m\big(P_m(\vec{\alpha})\big)$ to $\mc{V}_m^h\cap\mc{V}_{m-1}^h$:
$$
\inner{[\mc{F}_{h,\vv{r}}],[P_m(\vec{\alpha})]}
=(-1)^m\tr_{\mc{V}_m^h\cap\mc{V}_{m-1}^h}\pi^{(h)}_m\big(P_m(\vec{\alpha})\big) \;.
$$
It remains to compute the dimension of the range of the projection.

A basis vector $\ket{\vec{\beta}}\in\mc{V}_m^h\cap\mc{V}_{m-1}^h$ in the range of $\pi^{(h)}_m\big(P_m(\vec{\alpha})\big)$
has labels $\beta_1,\beta_2,\ldots,\beta_m$ fixed, and from \eqref{eq:capconstr} the dimension is the number of
$(\beta_{m+1},\ldots,\beta_h)$ satisfying
$$
1\leq \beta_h +1< \beta_{h-1}+1<\ldots< \beta_{m+1} +1\leq \beta_m  \;.
$$
This is the number of $h-m$ partitions of $ \beta_m $, that is $\binom{\beta_m}{h-m}$
(it is zero if $h-m>\beta_m$).

From \eqref{eq:rap} and \eqref{eq:inverse}, we get
$$
\beta_m=\sum_{j=1}^m\frac{\alpha_j-r_{j-1}}{p_{j-1}} 
$$
and this concludes the proof.
\end{proof}

\begin{thm}
The K-homology classes $[\mc{F}_{h,\vv{r}}]$ are linearly independent over $\Z$.
\end{thm}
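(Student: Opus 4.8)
The plan is to turn the explicit pairing formula of Theorem~\ref{thm:9.1} into a triangularity statement: given a vanishing integer combination $\sum_{(h,\vv{r})}c_{h,\vv{r}}[\mc{F}_{h,\vv{r}}]=0$, I would isolate each coefficient by pairing with a single, carefully chosen projection. Since the index pairing $K^0\times K_0\to\Z$ is $\Z$-bilinear, such a relation forces $\sum_{(h,\vv{r})}c_{h,\vv{r}}\inner{[\mc{F}_{h,\vv{r}}],[P]}=0$ for every projection $P$ in $C(\WP_q(\mv{\ell}))$, so it is enough to exhibit enough projections to recover the $c_{h,\vv{r}}$ one at a time.

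First I would dispose of the class $\mc{F}_{0,\emptyset}$ by pairing with the unit $[1]$. For $h\geq 1$ the operator $\pi^+_{h,\vv{r}}(1)-\pi^-_{h,\vv{r}}(1)$ vanishes: on each $\mc{V}^h_{k-1}\cap\mc{V}^h_k$ both neighbouring representations act as the identity, and since $k-1$ and $k$ have opposite parity one contributes to $\pi^+$ and the other to $\pi^-$, so the two cancel; on the joint kernel both are zero. Hence $\inner{[\mc{F}_{h,\vv{r}}],[1]}=0$ for all $h\geq 1$, whereas $\mc{F}_{0,\emptyset}$, being pulled back from the canonical Fredholm module of $\C$, satisfies $\inner{[\mc{F}_{0,\emptyset}],[1]}=1$. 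Pairing the relation with $[1]$ therefore yields $c_{0,\emptyset}=0$.

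It remains to treat $h\geq 1$. Fixing $(h,\vv{r})$, I would pair the relation with $P_h(\vec{\alpha})$ for the special choice $\vec{\alpha}=\vv{r}$, that is $\alpha_{i+1}=r_i$ for $0\leq i<h$; this is a legitimate projection since $0\leq r_i<p_i$ and $h\leq n$. Reading off the contributions from Theorem~\ref{thm:9.1} with $m=h$, a module $\mc{F}_{h',\vv{r}'}$ contributes zero unless $h'\geq h$ and the congruences $\alpha_{i+1}-r'_i\in p_i\N$ hold for $0\leq i<h$; because $0\leq r_i,r'_i<p_i$ these force $r'_i=r_i$, and in particular $N(\vv{r}',\vec{\alpha})=\sum_{i=0}^{h-1}(r_i-r'_i)/p_i=0$. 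For $h'=h$ this already pins down $\vv{r}'=\vv{r}$ and gives pairing $(-1)^h\binom{0}{0}=(-1)^h$; for $h'>h$ the surviving modules share the initial block but contribute $(-1)^h\binom{0}{h'-h}=0$, since $h'-h>0$. Together with $c_{0,\emptyset}=0$ established above, the only surviving term is $(-1)^h c_{h,\vv{r}}$, so $c_{h,\vv{r}}=0$.

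In effect, ordered by $h$ the matrix of pairings $\inner{[\mc{F}_{h,\vv{r}}],[P_h(\vv{r})]}$ is upper triangular with diagonal entries $\pm1$, the off-diagonal collapse being entirely due to the identity $\binom{0}{h'-h}=0$. The only genuinely delicate points are the choice $\vec{\alpha}=\vv{r}$, which is exactly what makes $N(\vv{r}',\vec{\alpha})$ vanish and triggers the binomial collapse, and the separate bookkeeping for $\mc{F}_{0,\emptyset}$ through the unit; once these are in place the $\Z$-linear independence of the $[\mc{F}_{h,\vv{r}}]$ is immediate.
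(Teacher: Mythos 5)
Your proposal is correct and follows essentially the same route as the paper: the paper also pairs with the unit to isolate $\mc{F}_{0,\emptyset}$ and then with the projections $P_{m}(\vec{\alpha}(\vv{s},\vec{\beta}))$ specialized to $\vec{\beta}=0$, which is precisely your choice $\vec{\alpha}=\vv{r}$, obtaining $\big<[\mc{F}_{h,\vv{r}}],[P_{m}(\vec{\alpha}(\vv{s},0))]\big>=(-1)^{m}\delta_{h,m}\delta_{\vv{r},\vv{s}}$ and concluding by the same duality argument. The only cosmetic difference is that you justify $\inner{[\mc{F}_{h,\vv{r}}],[1]}=0$ by the telescoping cancellation on the subspaces $\mc{V}^h_{k-1}\cap\mc{V}^h_k$, a step the paper asserts without proof.
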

\begin{proof}
Suppose we have elements $\{e_i\}$ of a $\Z$-module $M$ and $\Z$-linear maps $f_i:M\to \Z$
satisfying $f_i(e_j)=\delta_{ij}$. Then the maps $f_i$ are linearly independent: if $\sum_in_if_i=0$,
then \mbox{$\sum_in_if_i(e_j)=n_j=0$} proving that all coefficients are zero.
We need to find a family of K-theory classes of projections dual to
the K-homology classes of the Fredholm modules $\mc{F}_{h,\vv{r}}$.

The trivial projection is dual to $\mc{F}_{0,\emptyset}$: $\inner{[\mc{F}_{h,\vv{r}}],[1]}$
is $1$ if $h=0$ and is $0$ otherwise.

Let now $1\leq h\leq n$, $1\leq m\leq n$,
$\vv{r}=(r_0,\ldots,r_{h-1})$ and
$\vv{s}=(s_0,\ldots,s_{m-1})$ chosen to satisfy
$$
0\leq r_i<p_i \;,\qquad\quad 0\leq s_j<p_j \;,
$$
as in \eqref{eq:ri}, for all $0\leq i<h$ and $0\leq j<m$.
For $\vec{\beta}=(\beta_1,\ldots,\beta_m)\in\N^m$ set $\beta_0:=0$ and
$$
\alpha_{i+1}(\vv{s},\vec{\beta})=s_i+p_i(\beta_{i+1}-\beta_i) \;,\qquad\forall\;0\leq i<m \;.
$$
From Theorem \ref{thm:9.1}, $\big<[\mc{F}_{h,\vv{r}}],[P_{m}(\vec{\alpha}(\vv{s},\vec{\beta}))]\big>$
is zero unless $h\geq m$ and $\alpha_{i+1}-r_i$ is divisible by $p_i$ --- i.e.~$s_i-r_i$ is divisible
by $p_i$ --- for all $0\leq i<m$. Since $|r_i-s_i|<p_i$, the latter condition implies $s_i=r_i$ 
for all $0\leq i<m$. Therefore, by Theorem \ref{thm:9.1}:
\begin{equation}\label{eq:par}
\big<[\mc{F}_{h,\vv{r}}],[P_{m}(\vec{\alpha}(\vv{s},\vec{\beta}))]\big>=
\begin{cases}
0 & \text{if}\;h<m,\\[2pt]
\delta_{r_0,s_0}\ldots\delta_{r_m,s_m}
(-1)^m\binom{\beta_m}{h-m} & \text{if}\;h\geq m,
\end{cases}
\end{equation}
where by convention $\binom{\beta_m}{h-m}=0$ if $h-m>\beta_m$. If we choose $\vec{\beta}=0$,
$$
\big<[\mc{F}_{h,\vv{r}}],[P_{m}(\vec{\alpha}(\vv{s},0))]\big>=(-1)^m\delta_{h,m}\delta_{r_0,s_0}\ldots\delta_{r_m,s_m} \;.
$$
We have our dual family of projections: the pairing is zero unless $h=m$ and $\vv{r}=\vv{s}$.
\end{proof}

The K-theory of weighted projective spaces is known.
As an abelian group it is independent of the weights \cite[Thm.~3.4]{Amr94}: 
$K^0(\WP(\mv{\ell})) \simeq K^0(\CP^n) \simeq\Z^{n+1}$.
On the other hand, the multiplicative structure making $K^0(\WP(\mv{\ell}))$ 
a ring does depend upon the weights \cite[\S5]{Amr94}. 

Notably, the K-theory of the quantum weighted projective spaces does not agree with the K-theory of their commutative counterparts.
For $n=1$, it was shown in \cite{BF12} that $K_0(C(\WP_q(\ell_0,\ell_1)))=\Z^{\ell_1 +1}$, 
while as said, in the commutative case $K^0(\WP(\ell_0,\ell_1))) = \Z^2$.

From the pairings computed in \eqref{eq:par}, one deduces that in the quantum case the $K_0$ and $K^0$ groups are bigger
for $n\geq 1$. Indeed, both $K_0(C(\WP_q(\mv{\ell})))$ and $K^0(C(\WP_q(\mv{\ell})))$
contain a subgroup isomorphic to $\Z^N$, where, for $\mv{\ell}=\mv{p}^{\,\sharp}$ and $\mv{p}$ pairwise coprime, 
$N$ is the number in \eqref{eq:number}:
$$
N=1+\sum_{k=1}^np_0p_1\ldots p_{k-1} \,.
$$


\vfill\eject

\end{document}